\documentclass[12pt]{article}
\usepackage[T2A]{fontenc}
\usepackage[utf8]{inputenc}
\usepackage{graphicx}
\usepackage{placeins}
\usepackage{bmpsize}
\usepackage{amsfonts, amsmath, amssymb, amsthm}
\usepackage{floatflt}
\usepackage{graphics}
\usepackage{enumitem}
\usepackage{hyperref}
\usepackage[usenames]{color}
\usepackage{colortbl}

\DeclareGraphicsExtensions{.pdf,.png,.jpg}

\pagestyle{plain}
\oddsidemargin 0mm
\evensidemargin 0mm
\marginparwidth 5mm
\topmargin  -10mm
\textheight 230mm
\textwidth 160mm
\headheight 4mm
\headsep 4mm

\theoremstyle{plain}
\newtheorem{theorem}{Theorem} 
\newtheorem{lemma}{Lemma} 
\newtheorem*{corollary}{Corollary}
 
\newtheorem{claim}{Claim}

\theoremstyle{remark}

\newtheorem*{remark}{Remark}

\theoremstyle{definition}
\newtheorem*{defin}{Definition}

\def\J{\mathcal{J}}
\def\G{\mathcal{G}}
\def\EHR{\mathrm{EHR}}
\def\Nei{\mathcal{N}}

\def\geqslant{\geq}
\def\leqslant{\leq}

\def\N{\mathbb{N}}

\def\set#1{\left\{#1\right\}}

\def\p{\mathsf{P}}

\begin{document}

\title{Spectrum of FO logic with quantifier depth 4 is finite}

\author{Yury Yarovikov\footnote{Moscow Institute of Physics and Technology, Dolgoprudny, Russia; Artificial Intelligence Research Institute (AIRI), Moscow, Russia. This work was supported by a grant for research centers in the field of artificial intelligence, provided by the Analytical Center for the Government of the Russian Federation in accordance with the subsidy agreement (agreement identifier 000000D730321P5Q0002) and the agreement with the Moscow Institute of Physics and Technology dated November 1, 2021 No. 70-2021-00138.}, Maksim Zhukovskii\footnote{The University of Sheffield}}

\date{\vspace{-5ex}}

\maketitle

\sloppy 

\newcounter{mygpropertiescounter}
\setcounter{mygpropertiescounter}{0}
\def\gpr{\par\bigskip\refstepcounter{mygpropertiescounter}\textbf{\arabic{mygpropertiescounter}.} }
\renewcommand{\themygpropertiescounter}{\arabic{mygpropertiescounter}}

\begin{abstract}
     The $k$-spectrum is the set of all $\alpha>0$ such that $G(n,n^{-\alpha})$ does not obey the 0-1 law for FO sentences with quantifier depth at most $k$. In this paper, we prove that the minimum $k$ such that the $k$-spectrum is infinite equals 5.
     
\end{abstract}

\section{Introduction}
\label{intro}

In this paper, we study an asymptotic behavior of probabilities of properties of Erd\H{o}s--R\'{e}nyi random graphs $G(n,p=p(n))$ expressible as sentences in the first order (FO) logic. Recall that $G(n,p)$ is a random element of the set of all simple graphs $\Omega_n = \set{\mathcal{G} = ([n]=\{1,\ldots,n\}, \mathcal{E})}$, $n\in\mathbb{N}$,  with the distribution ${\sf P}(G(n,p)=\mathcal{G}) = p^{|\mathcal{E}|} (1 - p)^{{n \choose 2} - |\mathcal{E}|}$, i.e. every pair of distinct vertices is adjacent with probaility $p\in[0,1]$ independently of all the others. 

A graph {\it FO property} $\mathcal{L}$ is a property defined by a FO sentence $\varphi$~\cite{libkin} in the vocabulary $\set{=, \sim}$ consisting of two predicate symbols, ``$=$'' expressing coincidence of vertices and ``$\sim$'' expressing adjacency (i.e. $G$ has the property $\mathcal{L}$ if and only if $G$ satisfies $\varphi$).
Recall that the {\it quantifier depth} of a FO sentence $\varphi$ is, roughly speaking, the minimum number of nested quantifiers (see the formal definition in \cite[Definition 3.8]{libkin}). We define the {\it quantifier depth of a FO property} as the minimal quantifier depth of a FO sentence expressing this property.

Consider, for example, the FO sentence
$$
\forall x \exists y \exists z \quad (x\sim y \wedge y \sim z \wedge [\exists t \,\,x \sim z]).
$$
This formula is true if and only if each vertex of the graph is contained in a triangle. The quantifier depth of this formula equals $4$. The quantifier depth of the property expressed by the formula, however, is no more than $3$ (and, in fact, equals 3), since the quantifier over $t$ can be safely removed.\\

Note that many natural graph properties either hold with asymptotical probability 1 (or, briefly, `a.a.s.' meaning `asymptotically almost surely') on the random graph chosen uniformly at random from the set of all graphs on $[n]$ (i.e. $G(n,1/2)$), or do not hold a.a.s.; and the same is true for $G(n,p)$ for every constant probability $p$. In particular, for every $p=\mathrm{const}\in(0,1)$ a.a.s. $G(n,p)$ is connected, contains a triangle, is not planar, has diameter at most 2, is Hamiltonian, etc. All the mentioned properties belong to the class of so called monotone properties (for formal definitions, see Section~\ref{known-definitions}).  \textcolor{black}{In particular, all existential positive FO properties are monotone.} 
 Much more is known about monotone properties: every monotone property $\mathcal{L}$ has a threshold probability $p_0$, i.e. $G(n,p)$ has the property $\mathcal{L}$ a.a.s. if $p_0=o(p)$, and $G(n,p)$ does not have the property $\mathcal{L}$ a.a.s. if $p=o(p_0)$, or vice versa.  

The first attempt to formalise the set of properties that either hold a.a.s. on $G(n,p)$ or do not hold a.a.s. was done in a seminal paper of Glebskii, Kogan, Liogon’kii and Talanov in 1969~\cite{glebskii}: they proved (and independently Fagin in 1976~\cite{fagin}) that, for every FO property, either a.a.s $G(n,1/2)$ has it, or a.a.s. does not have; or in other words $G(n,1/2)$ {\it obeys FO 0-1 law}.\\

\begin{defin}
$G(n,p)$ \textit{obeys FO 0-1 law} if, for each FO property $\mathcal{L}$, 
$$
\lim_{n\to\infty}{\sf P}(G(n,p)\in\mathcal{L})\in\{0,1\}.
$$
\end{defin}

Their result can be easily extended (as was noticed by Spencer~\cite{Spencer_ehren}) to all $p$ such that, for every $\alpha>0$, $pn^{\alpha}\to\infty$ as $n\to\infty$. It is natural to ask, does FO 0-1 law hold when $p=n^{-\alpha}$? Shelah and Spencer in 1988~\cite{spencershelah} gave a complete answer.\\


\begin{theorem}[S. Shelah, J. Spencer, 1988, \cite{spencershelah}] \label{alpha irrational} Let $\alpha>0$, $p = n^{-\alpha}$. If $\alpha$ is irrational, then $G(n,p)$ obeys FO 0-1 law. If $\alpha\leq 1$ is rational, then $G(n,p)$ does not obey FO 0-1 law. If $\alpha>1$, then $G(n,p)$ does not obey the law if and only if $\alpha=1+\frac{1}{m}$ for some positive integer $m$.\\
\end{theorem}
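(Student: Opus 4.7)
The plan is to use the Ehrenfeucht--Fra\"{\i}ss\'{e} (EF) game characterization: $G(n,p)$ obeys the FO 0-1 law if and only if, for every fixed positive integer $k$, two independent copies of $G(n,p)$ are a.a.s.\ indistinguishable by the $k$-round EF game. The central invariant attached to a rooted extension $(H_0\subseteq H_1)$ is its $\alpha$-weight,
$$
w_\alpha(H_1/H_0) = \bigl(v(H_1)-v(H_0)\bigr) - \alpha\bigl(e(H_1)-e(H_0)\bigr),
$$
since the expected number of extensions of a fixed copy of $H_0$ in $G(n,n^{-\alpha})$ to a copy of $H_1$ is of order $n^{w_\alpha(H_1/H_0)}$. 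Call $(H_0, H_1)$ \emph{safe} when $w_\alpha>0$ on every intermediate sub-extension, \emph{rigid} when $w_\alpha<0$ on some sub-extension, and \emph{marginal} when $w_\alpha=0$ occurs somewhere.

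For the irrational case, I would exploit the key arithmetic fact that, with $\alpha\notin\mathbb{Q}$, the equation $v-\alpha e=0$ admits only the trivial integer solution $v=e=0$, so no non-trivial rooted extension is marginal: each extension type has a strictly positive or strictly negative weight margin. I would then establish two probabilistic axioms: (i) the \emph{extension axiom} --- for every safe $(H_0,H_1)$, a.a.s.\ every copy of $H_0$ in $G(n,n^{-\alpha})$ extends to a copy of $H_1$; (ii) the \emph{finiteness axiom} --- for every rigid $(H_0,H_1)$ of bounded size, the total count of such extensions is a.a.s.\ bounded by a constant (both are standard first/second moment calculations). Duplicator then responds to each Spoiler move by reproducing the local rigid type up to radius $k$ around the selected vertex and using safe completions to fill in; a union bound over the finitely many relevant rooted types gives a winning strategy a.a.s., hence the 0-1 law.

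For the failure of the law at rational $\alpha$, the plan is to exhibit FO sentences whose probabilities fail to converge to $\{0,1\}$. For $0<\alpha\le 1$ rational, I would construct a \emph{strictly $\alpha$-balanced} graph $H$, i.e.\ $v(H)/e(H)=\alpha$ while $v(H')/e(H')>\alpha$ for every proper non-empty subgraph $H'$. By the method of moments, the number of copies of $H$ in $G(n,n^{-\alpha})$ converges to a Poisson random variable with positive finite mean, so the existential sentence ``$G$ contains a copy of $H$'' has limit probability strictly in $(0,1)$. Explicit strictly $\alpha$-balanced graphs for every rational $\alpha\in(0,1]$ can be built by a standard cycle-plus-pendants gadget. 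For $\alpha=1+\tfrac1m$ with $m\in\mathbb{N}$ the same argument applies, with $H$ taken to be any fixed tree on $m+1$ vertices: then $e(H)=m$ and $v(H)/e(H)=(m+1)/m=\alpha$, and the Poisson limit again produces a non-convergent sentence.

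The main obstacle is the converse for $\alpha>1$: one must prove the 0-1 law \emph{does} hold whenever $\alpha$ is rational but not of the form $1+\tfrac1m$. The plan is to mimic the irrational-case EF argument while carefully handling the residual arithmetic of marginality. The identity $v-\alpha e=0$ under the constraint $\alpha>1$ rational, $\alpha\ne 1+\tfrac1m$, can only be satisfied by forest-shaped rooted extensions of a tightly constrained type; one classifies these and shows, via sharp first/second moment bounds, that each such marginal configuration is either a.a.s.\ absent from $G(n,n^{-\alpha})$ or else is isolable, so that Duplicator can still play around it using the extension and finiteness axioms that remain available for non-marginal types. Structuring this finite but intricate case analysis, so that the EF strategy survives the rational obstruction at every fixed depth $k$, is the principal technical difficulty of the proof.
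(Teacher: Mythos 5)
The paper itself offers no proof of this theorem: it is quoted from Shelah and Spencer \cite{spencershelah}, so your proposal can only be measured against the standard published argument, whose overall route you have correctly identified (Ehrenfeucht games plus extension/maximality axioms for irrational $\alpha$; Poisson counts of strictly balanced subgraphs for the rational failures; the structure of sparse forests for the remaining $\alpha>1$). The two ``failure'' parts are essentially right: a tree with $m$ edges is strictly balanced of density $m/(m+1)$, giving non-convergence at $\alpha=1+\frac1m$, and for rational $\alpha\le 1$ one needs a strictly balanced graph of density $1/\alpha\ge 1$, which exists by Ruci\'nski--Vince-type constructions. But your ``cycle-plus-pendants gadget'' cannot deliver this: attaching pendant vertices to a cycle never raises the density above $1$ and the resulting graph is not strictly balanced (it contains the cycle, of equal density), so densities $1/\alpha>1$ require chorded cycles, generalized theta graphs, or similar.

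The genuine gaps are elsewhere. First, your ``finiteness axiom'' is false as stated: for irrational $\alpha\in(\frac23,1)$ the pair (triangle, vertex) is rigid, yet the total number of such extensions in $G(n,n^{-\alpha})$ is of order $n^{3-3\alpha}\to\infty$, not bounded. What the argument actually needs is the finite-closure/maximal-extension statement: a.a.s.\ every bounded-size vertex set has a bounded closure under bounded-size rigid extensions, and every copy of the base of a safe pair admits a strict extension that is $(K,T)$-maximal for all bounded rigid $(K,T)$ (exactly Theorem~\ref{G,H,K,T} as quoted in the paper); Duplicator must then maintain an isomorphism between the closures of the selected tuples, with look-ahead depth depending on the number of remaining rounds. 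That strategy is the substantial content of the Shelah--Spencer proof and is only gestured at in your sketch. Second, you misplace the difficulty: the converse for rational $\alpha>1$, $\alpha\neq 1+\frac1m$, is the easy part, since a.a.s.\ $G(n,n^{-\alpha})$ is a forest whose components have at most $\lfloor 1/(\alpha-1)\rfloor$ edges, a connected graph with $v-\alpha e=0$ and $\alpha>1$ must be a tree (forcing $\alpha=1+\frac1{e}$), hence every bounded tree type occurs either never or unboundedly often a.a.s.\ and Duplicator wins any fixed-round game; your proposed case analysis of marginal rooted extensions is workable but unnecessary, while the irrational case you treat briefly is where the real work lies.
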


\medskip

From Theorem~\ref{alpha irrational} it immediately follows that, for any FO property $\mathcal{L}$, the set of $\alpha>0$ such that $\p(G(n,n^{-\alpha})\in\mathcal{L})$ does not approach neither 0 nor 1, consists only of rational numbers.

\begin{defin} 
For a FO property $\mathcal{L}$, the \textit{spectrum} of $\mathcal{L}$ is the set of all $\alpha > 0$ such that $\p(G(n, n^{-\alpha}) \in \mathcal{L})$ does not tend to either $0$ or $1$ as $n\to\infty$. \\
\end{defin}

Note that, for monotone properties, spectra may consist of at most 1 element. By combinining several monotone properties via logical connectives, we may reach only finite spectra. It is natural to expect that for all FO properties their spectra are finite. However, this is not true: sentences with infinite spectra can be found in~\cite{spencershelah,spencer_inf}. In this paper, we study a complexity of FO properties with infinite spectra. More precisely, we find the minimum quantifier depth of a property with an infinite spectrum.\\

Let $k\in\mathbb{N}$. Define $\mathrm{FO}_k$ as the set of all FO properties with quantifier depth at most $k$. It is well known that the \textcolor{black}{hierarchy} $\mathrm{FO}_k\subset\mathrm{FO}_{k+1}$ is strict for all $k$ (e.g., containing a $(k+1)$-clique belongs to $\mathrm{FO}_{k+1}\setminus\mathrm{FO}_k$), and that $\mathrm{FO}_k$ is expressive even for small values of $k$ (in particular, the property of containing an isolated vertex has quantifier depth 2).  The notion of quantifier depth of a formula lies on a bridge between logical and computational complexities. One can show, for instance, that, for a given property $\mathcal{L} \in FO_k$, there exists an algorithm that determines wether a graph $G$ with $v(G) = n$ has $\mathcal{L}$ in time $O(n^k)$ (see~\cite[Chapter 6]{libkin}).\\ 

\begin{defin}
$G(n,p)$ \textit{obeys $\mathrm{FO}_k$ 0-1 law} if for each FO property $\mathcal{L}$ with quantifier depth at most $k$, $\lim_{n\to\infty}{\sf P}(G(n,p)\in\mathcal{L})\in\{0,1\}$.\\
\end{defin}

Note that the study of validity of \textcolor{black}{$\mathrm{FO}_k$ 0-1 law} have direct corollaries to an estimation of expressive powers of fragments of FO logic. In particular, Theorem~\ref{zhukovskiiless} implies that the minimum quantifier depth of a FO sentence describing the property of containing an induced subgraph isomorphic to a given graph $F$ is at least $\frac{|E(F)|}{|V(F)|}+2$ (see~\cite{verbitsky}~and~\cite[Chapter 6]{verbitsky_induced}).

The obvious corollary of Theorem \ref{alpha irrational} is that, for functions $p = n^{-\alpha}$, $\alpha\notin\mathbb{Q}$, \textcolor{black}{$\mathrm{FO}_k$ 0-1 law} also holds. For $\alpha\in\mathbb{Q}$, the situation becomes more diverse. In particular, the following is true.\\

\begin{theorem}[M. E. Zhukovskii, 2012, \cite{zhukovskii}] \label{zhukovskiiless}
Let $p=n^{-\alpha}$, $\alpha \in \left(0, \frac{1}{k-2}\right)$, where $k \geqslant 3$ is an integer number. Then $G(n,p)$ obeys $\mathrm{FO}_k$ 0-1 law. If $\alpha = \frac{1}{k-2}$, then $G(n,p)$ does not obey $\mathrm{FO}_k$ 0-1 law.\\
\end{theorem}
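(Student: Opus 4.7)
The plan is to split the theorem into its two claims and handle each via the Ehrenfeucht--Fra\"iss\'e game characterization of $\mathrm{FO}_k$-equivalence: two graphs are $\mathrm{FO}_k$-equivalent if and only if Duplicator has a winning strategy in the $k$-round $\EHR$ game on them. Hence proving the $\mathrm{FO}_k$ 0-1 law reduces to showing that Duplicator a.a.s.\ wins the $k$-round game on two independent copies of $G(n,p)$, while proving failure reduces to exhibiting a depth-$k$ sentence with non-trivial limiting probability.

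For the first claim ($\alpha<1/(k-2)$), first I would establish an extension lemma of Spencer--Shelah type: for every pair of fixed finite graphs $R\subseteq S$ with $|V(S)|\leq k$, if the quantity $|V(S)|-|R|-\alpha\cdot|E(S)\setminus E(R)|$ is strictly positive, then a.a.s.\ in $G(n,n^{-\alpha})$ every embedding of $R$ extends to an embedding of $S$ in roughly the expected number of ways. This would follow from a standard first- and second-moment analysis (or a Janson-type inequality). Then I would inductively construct Duplicator's strategy: with $j$ pebble pairs inducing isomorphic subgraphs, when Spoiler plays a new vertex in one copy, Duplicator reads off its adjacency pattern to the placed pebbles and invokes the extension lemma in the other copy to find a matching response that also keeps all extensions relevant to the remaining $k-j-1$ rounds available. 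The inequality $\alpha<1/(k-2)$ is precisely what forces every such extension arising within the $k$ rounds to be safe (of positive density invariant).

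For the second claim ($\alpha=1/(k-2)$), I would exhibit a depth-$k$ sentence $\varphi$ whose number of witnesses in $G(n,n^{-1/(k-2)})$ converges in distribution to $\mathrm{Poisson}(\lambda)$ for a constant $\lambda>0$, so that $\prob(G(n,n^{-1/(k-2)})\models\varphi)$ tends to $1-e^{-\lambda}\in(0,1)$. The base case $k=3$, $\alpha=1$ is handled by triangles: the depth-$3$ sentence $\exists x\exists y\exists z\,(x\sim y\wedge y\sim z\wedge x\sim z)$ has asymptotically $\mathrm{Poisson}(1/6)$-many witnesses in $G(n,1/n)$. For general $k$, one picks as witness a small gadget whose edge-to-vertex ratio equals the critical density $k-2$, encodes its existence by a quantifier-depth-$k$ sentence, and verifies the Poisson limit by the method of factorial moments.

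The main technical obstacle is the two-sided control in Duplicator's strategy. It is not enough for Duplicator to produce a response with the correct adjacency to the placed pebbles --- the response must not commit Duplicator to some later rigid (negative-density-invariant) sub-extension that a.a.s.\ does not exist. The threshold $1/(k-2)$ comes out precisely from the worst-case scenario where Spoiler tries to force a maximal-density extension at the final round of the game, and making this quantitative while choosing a uniform safety buffer that survives all $k$ rounds simultaneously is the most delicate part of the argument.
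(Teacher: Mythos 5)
This statement is quoted from Zhukovskii's 2012 paper and is not proved in the present text, so there is no internal proof to compare with; judged on its own terms, your sketch follows the right general framework (Ehrenfeucht games plus Spencer--Shelah extension arguments) but has a genuine gap at its central step. The claim that ``$\alpha<1/(k-2)$ is precisely what forces every such extension arising within the $k$ rounds to be safe'' is false: a one-vertex extension adjacent to $e$ of the pebbled vertices is $\alpha$-safe only when $\alpha<1/e$, and in the last round $e$ can be as large as $k-1$, so safety of all in-game extensions would require $\alpha<1/(k-1)$. In the regime $\alpha\in[1/(k-1),1/(k-2))$ Spoiler can steer the early rounds toward configurations (e.g.\ $k-1$ vertices that do possess a common neighbour, since the corresponding star is below its containment threshold) whose final-round extension is rigid, not safe. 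Duplicator cannot answer such moves by the extension lemma alone; she must already in the early rounds choose vertices whose pattern of \emph{rigid and neutral} extensions matches Spoiler's, which is exactly the type-matching/$(K,T)$-maximality machinery (cf.\ Theorem~\ref{G,H,K,T} and the profile bookkeeping this paper develops for $k=4$ near $3/5$). You flag this two-sided control as ``the most delicate part,'' but your sketch supplies no mechanism for it, and the reason you give for the value $1/(k-2)$ is not the correct one; note also that the auxiliary structures to be matched are not confined to $k$ vertices, so an extension lemma restricted to $|V(S)|\leq k$ does not suffice.

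The second half has a gap as well. For $k\geq 4$, any graph with edge-to-vertex ratio $k-2$ has at least $2k-3>k$ vertices (from $e=(k-2)v\leq\binom{v}{2}$), so ``encodes its existence by a quantifier-depth-$k$ sentence'' is not a routine step: plain subgraph containment of such a gadget is generally not expressible with only $k$ nested quantifiers (for cliques the depth equals the number of vertices), and the lower bound $e/v+2$ mentioned in this paper shows depth $k$ is the best conceivable, not that it is achievable. The actual construction of a depth-$k$ sentence with a Poisson-distributed witness count at $\alpha=1/(k-2)$ requires quantifier reuse or an extension-type property and is the substantive content of the sharpness claim; your argument only covers the base case $k=3$ (triangles at $\alpha=1$), which is indeed correct.
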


{\color{black} It is very well known that, for every $k$ there are only finitely many properties with quantifier depth $k$ (see, e.g.,~\cite[Chapter 3]{libkin}). Therefore, a FO property with quantifier depth $k$ has an infinite spectrum if and only if the union of spectra of FO properties with quantifier depth $k$ is infinite. Note that such a union is exactly the set of all $\alpha>0$ such that $G(n,n^{-\alpha})$ does not obey $\mathrm{FO}_k$ 0-1 law.




\begin{defin}
For a fixed integer $k$, the {\it $k$-spectrum} is the set of all $\alpha > 0$ such that $G(n,n^{-\alpha})$ does not obey FO$_k$ 0-1 law.\\
\end{defin}



Note that Theorem \ref{zhukovskiiless} states that, for $k \geqslant 3$, the minimal element of the $k$-spectrum equals $\frac{1}{k-2}$.


In 1990~\cite{spencer_inf},  Spencer proved that the 14-spectrum is infinite. 
In particular, Spencer proved the existence of a FO$_{14}$ property $\mathcal{L}$ with the spectrum having $1/3$ as a limit point (i.e. each neighbourhood of $1/3$ contains an infinite number of points of the set). Note that, since the $k$-spectrum is a bounded set (in particular, it is entirely inside $(0,2]$ due to Theorem~\ref{alpha irrational} -- indeed, all points of spectra that are greater than 1 equal to $1+1/m$ for some positive integer $m$), the $k$-spectrum is finite if and only if it has no limit points. }

By Theorem \ref{alpha irrational}, limit points of the $k$-spectrum belong to $(0, 1]$. The behavior of limit points of the $k$-spectrum near 0 and 1 for large enough $k$ was studied in~\cite{SZ}.\\

As we note above, the existence of FO properties with an infinite spectrum is rather counterintuitive. It might come as a bit of relief that any limit point of the $k$-spectrum is a limit point ``from above'': for each $\alpha > 0$ there exists $\varepsilon > 0$ such that $(\alpha - \varepsilon, \alpha)$ does not contain any points of the $k$-spectrum \cite[Section 8.4]{strangelogic}.\\

In 2016~\cite{smallk}, the second author proved that $\frac12$ is a limit point of the $5$-spectrum, thus establishing that the $5$-spectrum is infinite. Moreover, the $3$-spectrum is finite by Theorem \ref{zhukovskiiless} and the fact that $1$ is not a limit point of a $k$-spectrum (see \cite[Theorem 8]{ostrovskii}).

Whether the 4-spectrum is finite was an open question before this paper. The following results were obtained recently.\\

\begin{theorem}[A. D. Matushkin, M. E. Zhukovskii, 2017, \cite{matushkin}]
The only limit points of the $4$-spectrum may be $\frac12$ and $\frac35$.\\
\end{theorem}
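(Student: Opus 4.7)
The plan is to analyze the $4$-spectrum via the Ehrenfeucht--Fra\"{i}ss\'{e} game characterization: $\alpha$ belongs to the $4$-spectrum if and only if Duplicator does not a.a.s.\ win the $4$-round EF game played on two independent copies of $G(n, n^{-\alpha})$. A limit point $\alpha^*$ therefore corresponds to an infinite sequence of such $\alpha_i$'s, each rational by Theorem~\ref{alpha irrational}, converging to $\alpha^*$. Combining Theorem~\ref{zhukovskiiless} with the earlier remark that $1$ is not a limit point, one sees $\alpha^* \in (1/2, 1)$.

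The first step is to invoke the Shelah--Spencer extension-statement machinery. At a position of the game with $j \leq 3$ pebbles on each side, Spoiler's next move effectively queries the existence of a rooted extension $(H, A)$ with $|A| = j$ in one copy, and Duplicator must match it in the other. Such an extension is a.a.s.\ present in $G(n,n^{-\alpha})$ when $\alpha$ lies strictly below the maximum of $\frac{e(H[B]) - e(H[A])}{|B \setminus A|}$ over $A \subsetneq B \subseteq V(H)$, and a.a.s.\ absent when $\alpha$ is strictly above. The boundary cases, where this density equals $1/\alpha$, are the only rationals at which $\mathrm{FO}_4$ $0$-$1$ law can fail, so the $4$-spectrum is contained in the (countable) set of such critical densities ranging over rooted graphs realizable inside a $4$-round game.

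The next step is to enumerate these extensions. Since only $4$ vertices are pebbled in total, the combinatorial types of witnesses have bounded size at any fixed round; however, iteration---an extension whose existence itself requires a further extension in a subsequent round---permits witness families of arbitrarily large size. The crucial observation is that these families generate only finitely many critical densities outside any neighbourhood of two ``limiting'' values: $1/\alpha = 2$ (i.e.\ $\alpha = 1/2$), arising from iterated edge-extensions, and $1/\alpha = 5/3$ (i.e.\ $\alpha = 3/5$), arising from a small rigid extension on three non-root vertices spanning five extra edges, together with its natural iterates.

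The main obstacle is verifying that no other rational in $(1/2, 1)$ is a limit point. This requires a case analysis of the rooted graphs appearing in the $4$-round game, grouping them by their density patterns and showing that any accumulation in $(1/2, 1)$ forces either the density $2$ or $5/3$: every other rational critical density is attained by only finitely many essentially distinct extensions realizable within four quantifier alternations, hence contributes only finitely many spectrum points near itself. This combinatorial bookkeeping, including the explicit description of the $3/5$-family and the verification that any candidate convergent sequence of critical densities in $(1/2,1)\setminus\{3/5\}$ is eventually constant, is the technically heaviest part of the argument.
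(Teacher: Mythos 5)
First, a point of comparison: this statement is not proven in the paper you were given --- it is quoted as an external result of Matushkin and Zhukovskii \cite{matushkin}, so there is no internal proof to measure your attempt against. Judged on its own, your proposal sets up the standard framework correctly (Theorem~\ref{ehrenfeuht}, reduction to 4-round Ehrenfeucht games, Shelah--Spencer extension statements, critical densities of $\alpha$-safe/rigid/neutral pairs), but it is an outline rather than a proof. The entire content of the theorem is the claim you label ``the crucial observation'' and then defer to unexecuted ``combinatorial bookkeeping'': that among all critical densities realizable as obstructions in a 4-round game, only $2$ and $5/3$ (i.e.\ $\alpha=1/2$ and $\alpha=3/5$) can be accumulation points, every other rational being hit by only finitely many relevant extension types. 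Nothing in the proposal supplies a mechanism for this: one needs to classify what a depth-4 sentence can actually detect (which is a delicate strategy analysis of the kind occupying Sections~\ref{sc:new_constructions}--\ref{sc:th_proof} of this paper for the single point $3/5$), not merely to enumerate rooted extensions of bounded size, since --- as you yourself note --- iterated witness families have unbounded size. Asserting that they ``generate only finitely many critical densities outside any neighbourhood of'' $1/2$ and $3/5$ is a restatement of the theorem, not an argument for it.

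There is also a concrete incorrect step: from Theorem~\ref{zhukovskiiless} (minimum of the 4-spectrum equals $\frac12$) and the fact that $1$ is not a limit point, you conclude $\alpha^*\in(\frac12,1)$. What actually follows is only $\alpha^*\in[\frac12,1)$: the absence of spectrum points below $\frac12$ does not prevent accumulation at $\frac12$ from above (recall limit points of $k$-spectra are limit points from above). Indeed, excluding $\frac12$ as a limit point is precisely Theorem~\ref{1/2}, a separate and substantial result from \cite{yu-rovikov}; if it followed from the two facts you cite, that paper would be unnecessary. So even the easy perimeter of your argument is not quite right, and the core finiteness claim remains unproven.
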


\begin{theorem}[Y. N. Yarovikov, 2021, \cite{yu-rovikov}] \label{1/2} $\frac{1}{2}$ is not a limit point of the 4-spectrum.\\
\end{theorem}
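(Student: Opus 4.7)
The plan is to use the Ehrenfeucht--Fra\"iss\'e game characterization of $\mathrm{FO}_4$-equivalence. Since limit points of the $4$-spectrum can only approach from above \cite[Section 8.4]{strangelogic}, it suffices to exhibit $\varepsilon > 0$ such that for every $\alpha \in (\tfrac12, \tfrac12 + \varepsilon)$, Duplicator a.a.s.\ wins the $4$-round EHR game on two independent copies of $G(n, n^{-\alpha})$. Together with Theorem~\ref{alpha irrational}, this shows $(\tfrac12, \tfrac12 + \varepsilon)$ is disjoint from the $4$-spectrum, so $\tfrac12$ is not a limit point.

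The structural fact I would exploit is the regime change at $\alpha = \tfrac12$: for two distinct vertices $u, v$, the expected number of common neighbors in $G(n, n^{-\alpha})$ is of order $n^{1-2\alpha}$. This quantity is $\Theta(1)$ at $\alpha = \tfrac12$ (which drives the failure of the 0-1 law there), tends to zero for $\alpha > \tfrac12$, and tends to infinity for $\alpha < \tfrac12$. Consequently, for $\alpha$ slightly above $\tfrac12$, a typical pair of vertices has no common neighbor; paths of length~$2$ between fixed endpoints are scarce, and pairs of such paths sharing both endpoints are rarer still. These scarcity statements heavily restrict which induced $4$-vertex subgraphs appear around a given vertex, and are the basis of the rigidity on which Duplicator's strategy will rest.

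Duplicator's strategy is then built on matching extension types: I would enumerate the finitely many isomorphism types of pointed labeled graphs relevant to a $4$-round play, and for each type $T$ and each partial history, compute the asymptotic count of extensions of type $T$. Using first- and second-moment arguments, I would show that for $\alpha$ in a narrow window above $\tfrac12$, each such count is either $\omega(1)$ or $0$ asymptotically almost surely. Duplicator responds to each Spoiler move by picking a vertex realizing the same extension type, which is possible a.a.s.\ provided no forbidden extension has been requested; an induction on the number of rounds remaining guarantees this.

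The main difficulty, I expect, is handling extension types whose threshold density $\tfrac{v-1}{e-1}$ lies close to $\tfrac12$. Their expected counts depend delicately on $\alpha$, and the corresponding subgraphs may occur in correlated patterns that must be decoupled by conditional second-moment arguments. The Matushkin--Zhukovskii theorem guarantees that the only candidate limit points of the $4$-spectrum are $\tfrac12$ and $\tfrac35$, so by choosing $\varepsilon$ smaller than $\tfrac35 - \tfrac12 = \tfrac{1}{10}$ and smaller than the distance from $\tfrac12$ to any other threshold density arising from the finite list of relevant extensions, one isolates the problematic case. The residual analysis reduces to a finite combinatorial classification focused on the borderline-at-$\tfrac12$ types, for which a careful accounting of joint distributions of small subgraph counts in $G(n,n^{-\alpha})$ completes the argument.
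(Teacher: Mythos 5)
The statement you are proving is quoted in this paper from~\cite{yu-rovikov}; the paper itself only proves the analogous fact for $\frac35$ (Theorem~\ref{main_theorem}), by the same general scheme you sketch: reduce to showing that for $\alpha$ in a small window above the critical value Duplicator a.a.s.\ wins $\EHR(X,Y,4)$. Your frame (Theorem~\ref{ehrenfeuht} plus the ``limit points only from above'' fact) is therefore fine, but the core step of your argument is not. You claim that, after enumerating the finitely many extension types relevant to a $4$-round play, first- and second-moment computations show each count is a.a.s.\ either $0$ or $\omega(1)$, so Duplicator can always answer with a vertex ``of the same type''. This dichotomy is false in the relevant regime. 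For every rational $\alpha$ in your window there are $\alpha$-neutral pairs (with $v-\alpha e=0$), whose extension counts converge to Poisson laws, hence are neither a.a.s.\ $0$ nor $\omega(1)$; and $\alpha$-rigid configurations, while absent at any \emph{fixed} tuple, do occur somewhere in the graph (e.g.\ there are about $n^{4-4\alpha}\to\infty$ four-cycles even though a fixed pair of vertices a.a.s.\ has no two common neighbours). Spoiler's whole strategy is to play inside such rare substructures, and Duplicator cannot find a matching vertex by a counting argument alone, because these structures sit at a bounded number of random locations. What is actually needed --- and what both \cite{yu-rovikov} and the present paper supply --- is a classification of the rare dense configurations around the first move (the finite family $\G$ of bad subgraphs, with the disjointness Claim~\ref{all_bad_dont_intersect}), a profile-matching lemma for the first move (Lemma~\ref{lemma1}), the Shelah--Spencer maximal-extension theorem (Theorem~\ref{G,H,K,T}) to embed the matched configuration so that it admits no further rigid or neutral extensions, and then a full case analysis of rounds $2$--$4$. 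None of this is replaced by your ``induction on rounds remaining''.

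A concrete sanity check that the missing ingredient is essential: your outline never uses the bound of four rounds quantitatively (the ``finite list of relevant types'' is left unspecified), yet the analogous statement fails for five rounds --- $\frac12$ \emph{is} a limit point of the $5$-spectrum~\cite{smallk}. Any correct proof must identify exactly why four quantifiers cannot exploit the neutral/rigid structures whose densities approach $\frac12$ as $\varepsilon\to0$; that is where the game-theoretic case analysis (the analogue of Cases 1--4 in Section~\ref{sc:th_proof}) carries the real weight. A minor point in the same direction: the relevant exponent for rooted extensions is $f_\alpha(G,H)=v(G,H)-\alpha\,e(G,H)$, not a ratio of the form $\frac{v-1}{e-1}$, and invoking Matushkin--Zhukovskii to shrink $\varepsilon$ does not remove the borderline structures, since rationals arbitrarily close to $\frac12$ (with large denominators) remain in any window.
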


In this paper, we consider the last possible limit point of the $4$-spectrum, namely, $\frac35$. The main result of the paper is as follows.\\

\begin{theorem} 
$\frac35$ is not a limit point of the $4$-spectrum.\\  
\label{main_theorem} 
\end{theorem}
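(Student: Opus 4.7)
The natural framework is the Ehrenfeucht--Fra\"iss\'e ($\EHR$) game: Duplicator wins the $k$-round $\EHR$ game on a pair of graphs if and only if they satisfy the same FO sentences of quantifier depth at most $k$. Consequently, the FO$_4$ 0-1 law holds at $p = n^{-\alpha}$ if and only if Duplicator a.a.s.\ wins the 4-round $\EHR$ game on two independent copies $G(n_1, n_1^{-\alpha})$ and $G(n_2, n_2^{-\alpha})$ as $n_1, n_2 \to \infty$. To establish Theorem \ref{main_theorem} it therefore suffices to exhibit an $\varepsilon > 0$ such that the FO$_4$ 0-1 law holds for every $\alpha \in (\frac{3}{5}, \frac{3}{5} + \varepsilon)$. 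The plan is to take $\varepsilon = \frac{1}{15}$, i.e., to prove the 0-1 law on the whole interval $(\frac{3}{5}, \frac{2}{3})$.

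Following the Shelah--Spencer paradigm, the obstruction to Duplicator winning the $k$-round game at $p = n^{-\alpha}$ comes from \emph{rigid} rooted extensions $(H, R)$ with at most $k-1$ new vertices and density deficit $\delta_\alpha(H, R) := (v(H) - v(R)) - \alpha \cdot (e(H) - e(H[R])) = 0$. For $k = 4$ and $\alpha \in (\frac{3}{5}, \frac{2}{3})$, rigidity of an extension with $V := v(H) - v(R) \in \{1, 2, 3\}$ new vertices and $E := e(H) - e(H[R])$ new edges forces the ratio $E/V = 1/\alpha$ to lie strictly between $\frac{3}{2}$ and $\frac{5}{3}$; no integer $E$ realizes this ratio for $V \in \{1,2,3\}$. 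Hence no small rigid extension exists in the open interval. Moreover, the unique small rigid extension at the boundary $\alpha = \frac{3}{5}$ itself (namely, $V = 3$, $E = 5$) becomes \emph{dangerous}, with $\delta_\alpha < 0$, as soon as $\alpha > \frac{3}{5}$, and therefore a.a.s.\ admits no copy rooted anywhere in $G(n, n^{-\alpha})$.

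The Duplicator strategy then proceeds round by round: after move $i$, Duplicator maintains a partial isomorphism between the chosen tuples such that both root tuples realize every ``safe type'' of size at most $k - i$. This is possible because (i) every safe extension of size at most $3$ appears a.a.s.\ in abundance in $G(n, n^{-\alpha})$ by standard concentration arguments (see, e.g., \cite[Section 10]{strangelogic}), and (ii) by the above observation, no rigid extension of size $\leq 3$ can be used by Spoiler to force an asymmetry between the two graphs. Combined with Theorem \ref{1/2} and the reduction of candidate limit points of the 4-spectrum to $\{\frac{1}{2}, \frac{3}{5}\}$ due to Matushkin and Zhukovskii, this yields Theorem \ref{main_theorem}.

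The main technical obstacle is to convert the qualitative absence of rigid small extensions into a quantitative Duplicator strategy, particularly when handling Spoiler's first move. Spoiler may pick a vertex with an atypical local structure, e.g., one lying in a small subgraph whose density is close (though not equal) to $\frac{5}{3}$, and Duplicator must produce a matching vertex in the second graph with the same ``3-round type''. The heart of the argument will be to show that the set of realized 3-round types is finite and that each is a.a.s.\ realized by a polynomial number of vertices, uniformly in $\alpha \in (\frac{3}{5}, \frac{2}{3})$. This parallels the analysis in Theorem \ref{1/2} for the neighbourhood of $\frac{1}{2}$, with the role of extensions of density $2$ there played here by extensions of density $\frac{5}{3}$.
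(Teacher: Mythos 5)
Your reduction to the Ehrenfeucht game and the appeal to Matushkin--Zhukovskii and Theorem \ref{1/2} match the paper's outer frame, but the core of your argument has a genuine gap: you misidentify the obstruction. It is true that for $\alpha\in(\frac35,\frac23)$ there is no $\alpha$-neutral pair with at most $3$ new vertices, and that the pair with $V=3$, $E=5$ has $f_\alpha<0$ once $\alpha>\frac35$. But your conclusion that such configurations therefore ``a.a.s.\ admit no copy rooted anywhere'' is false: for a pair rooted at a single vertex the expected number of copies over all roots is of order $n^{1+V-\alpha E}=n^{4-5\alpha}\to\infty$ for $\alpha$ slightly above $\frac35$, so polynomially many vertices do carry such dense extensions (and towers of further rigid extensions on top of them, with up to $\approx 176$ vertices in the paper's accounting). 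This is precisely Spoiler's weapon: he spends his first move on a vertex $x_1$ sitting inside such a large, atypically dense structure, and then uses the remaining three moves only to probe small extensions (common neighbours, $(K_j,T_j)$-type gadgets) of pairs and triples \emph{inside} it. Whether those probes succeed depends on the global structure around $x_1$, not on extensions of the empty tuple by at most three vertices, so the absence of small neutral pairs does not by itself give Duplicator a strategy. Handling this is essentially the entire content of the paper: the set $\mathcal{G}$ of bad neighbourhoods, the $u$-bad subgraphs and their disjointness (Claim \ref{all_bad_dont_intersect}), the profiles $\J^i$ and Lemma \ref{lemma1} for the first move, Lemmas \ref{lemma2} and \ref{lemma3} for the second, and the case analysis in Section \ref{sc:th_proof}. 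Your proposal acknowledges the first-move difficulty only as a ``technical obstacle'' and asserts finiteness and abundance of ``3-round types'' without any mechanism for matching them; that assertion is exactly what needs proof.

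A secondary problem is the claimed uniform $\varepsilon=\frac1{15}$, i.e.\ the FO$_4$ 0-1 law on all of $(\frac35,\frac23)$. Nothing in your argument supports a uniform interval: as $\alpha\downarrow\frac35$ the size of the relevant dense structures (subgraphs of density just below $\frac53$ attached to $x_1$) is bounded only in terms of $\alpha-\frac35$ in a naive analysis, and the paper's achievement is precisely to cap this blow-up by a fixed finite family $\mathcal{G}$, obtaining an unspecified small $\varepsilon>0$. Known results cited in the paper's conclusion leave the status of the interval $(\frac35,\frac{53}{80})$ open beyond finiteness, so the stronger statement you aim for is not something the sketch entitles you to.
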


Therefore, the minimum $k$ such that the $k$-spectrum is infinite equals $5$. Note that this immediately implies that the $k$-spectrum is infinite for all $k\geq 5$ and finite for all $k\leq 4$.\\ 

Among the surveys on asymptotic behaviour of logical properties, we recommend~\cite{strangelogic}~and~\cite[Chapter 4]{finite-model-theory}. For going deeper into the subject of finite model theory and, in particular, descriptive complexity, we suggest \cite{immerman,libkin}.\\

\textcolor{black}{An upper level scheme of the proof of Theorem~\ref{main_theorem} is similar to that of Theorem~\ref{1/2} proven in~\cite{yu-rovikov}. However, the proof itself has been significantly modified. For convenience of a reader, the text of this paper is self-contained --- the only exclusion is the absense of the proof of Claim~\ref{helpKTokralphasafe} (from Section~\ref{known-definitions}) which appears in full generality in~\cite{yu-rovikov}. In Section~\ref{known-definitions} we recall some known constructions needed to prove Theorem~\ref{main_theorem}. Note that all definitions and theorems presented in Section~\ref{known-definitions} are rather standard, for proofs we refer a reader for most relevant sources. 
 In Section~\ref{sc:new_constructions}, we introduce auxiliary constructions inspired, in part, by~\cite{yu-rovikov}. However, all the claims stated in Section~\ref{sc:new_constructions} are new, and so we present their proofs in full. In Section~\ref{sc:3_lemmata}, we prove three main lemmas that are also inspired by~\cite{yu-rovikov} but their proofs are much more involved. Finally, we prove Theorem~\ref{main_theorem} in Section~\ref{sc:th_proof}.\\}






\section{Known definitions and statements}\label{known-definitions}
\subsection{Ehrenfeucht game}

In Ehrenfeucht game $\EHR(X, Y, k)$ on graphs $X$, $Y$, there are two players, Spoiler and Duplicator, and a fixed number of rounds $k\in\mathbb{N}$ (\cite{ehrenfeucht}). Each round of the game constitutes a move by Spoiler followed by a move by Duplicator: Spoiler selects a vertex from \textit{either} of the two graphs $X$ and $Y$, whereas Duplicator selects a vertex from the \textit{other} graph. Let $x_{\nu}$ denote the vertex selected from $X$ in round $\nu$, and $y_{\nu}$ that from $Y$ in round $\nu$, for $1 \leqslant \nu \leqslant k$.

Duplicator wins the game if for each $s, t \in \{1, \ldots, k\}$, the following hold:
\begin{itemize}
\item $x_s = x_t$ if and only if $y_s$ = $y_t$;
\item $x_s \sim x_t$ if and only if $y_s \sim y_t$.
\end{itemize}

Otherwise, Spoiler wins.\\


The following theorem is a well known corollary of the famous Ehrenfeucht-Fra\"{\i}ss\'{e} theorem about the equivalence of a logical indistinguishability and the existence of a winning strategy of Duplicator (see, e.g., \cite{uspekhi} for the complete proof of this corollary; the statement of Ehrenfeucht-Fra\"{\i}ss\'{e} theorem can be also found, e.g., in~\cite[Lemma 10.3]{luczak}~and~\cite[Theorem 3.18]{libkin}).

\textcolor{black}{Hereafter, for random graphs $X$, $Y$, we write $X\stackrel{d}=Y$ meaning that $X$ and $Y$ are identically distributed}.\\

\begin{theorem}
$G(n,p)$ obeys FO$_k$ 0-1 law if and only if
\begin{gather}\label{ehrenfeucht-formula}
    \lim_{n,m \rightarrow \infty} \p \Bigl(\mbox{Duplicator has a winning strategy in } \EHR(X, Y, k)\Bigr) = 1,
\end{gather}
where $X\stackrel{d}=G(n,p(n))$, $Y\stackrel{d}=G(m,p(m))$ are independent.\\
\label{ehrenfeuht}

\end{theorem}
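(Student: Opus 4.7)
The plan is to derive the statement as a standard corollary of the Ehrenfeucht--Fra\"{\i}ss\'{e} theorem (EF), which asserts that Duplicator has a winning strategy in $\EHR(X, Y, k)$ if and only if $X$ and $Y$ satisfy the same FO sentences of quantifier depth at most $k$. The only other ingredient is the well-known fact that, up to logical equivalence, there are only finitely many FO sentences of depth at most $k$ over the vocabulary $\{=,\sim\}$. Given these two inputs, the proof reduces to elementary probabilistic bookkeeping in both directions.

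For the direction ($\Leftarrow$), I would argue by contrapositive. Suppose FO$_k$ 0-1 law fails, and let $\varphi$ be a sentence of depth at most $k$ for which $\prob(G(n,p) \models \varphi)$ does not tend to $0$ or $1$. Then $\limsup \prob(G(n,p)\models\varphi) > 0$ and $\liminf \prob(G(n,p)\models\varphi) < 1$, so there exist $\delta > 0$ and sequences $n_i, m_j \to \infty$ satisfying $\prob(G(n_i, p(n_i)) \models \varphi) \geq \delta$ and $\prob(G(m_j, p(m_j)) \models \neg\varphi) \geq \delta$. By independence of $X$ and $Y$, the event $\{X \models \varphi,\ Y \models \neg\varphi\}$ has probability at least $\delta^2$ along these subsequences, and in that event Spoiler wins by EF. Consequently $\prob(\text{Duplicator wins } \EHR(X, Y, k)) \leq 1-\delta^2$ along $(n_i, m_j)$, contradicting \eqref{ehrenfeucht-formula} because a joint limit equal to $1$ forces every subsequential limit to equal $1$.

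For the direction ($\Rightarrow$), fix a finite list $\varphi_1, \ldots, \varphi_N$ of representatives of FO sentences of depth at most $k$ modulo logical equivalence. Assuming FO$_k$ 0-1 law, each $\prob(G(n,p) \models \varphi_i)$ tends to some $\varepsilon_i \in \{0,1\}$; set $T^* = \{i : \varepsilon_i = 1\}$. By the finite union bound, the probability that $G(n,p)$ satisfies precisely the sentences indexed by $T^*$ tends to $1$ as $n \to \infty$. Hence, as $n, m \to \infty$ independently, with probability tending to $1$ both $X$ and $Y$ have ``theory'' $T^*$, so they agree on every FO sentence of depth at most $k$, and Duplicator wins $\EHR(X, Y, k)$ by EF.

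The main --- and really only --- conceptual obstacle is the appeal to EF itself, which is invoked here as a black box (see, e.g., \cite[Theorem 3.18]{libkin}). The remainder is straightforward finite-combinatorial probability, together with the minor observation that the joint limit condition in \eqref{ehrenfeucht-formula} is equivalent to convergence along all pairs of sequences $n, m \to \infty$.
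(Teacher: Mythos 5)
Your proof is correct, and it is essentially the standard argument: the paper itself does not prove Theorem~\ref{ehrenfeuht} but defers to \cite{strangelogic,uspekhi}, where the derivation is exactly this combination of the Ehrenfeucht--Fra\"{\i}ss\'{e} theorem with the finiteness of depth-$k$ sentences up to equivalence, handled by the same elementary probabilistic bookkeeping (independence giving the $\delta^2$ bound in one direction, the union bound over the finitely many limiting truth values in the other). No gaps; only a cosmetic remark: in the contrapositive direction it suffices to note that Duplicator lacks a winning strategy when $X\models\varphi$, $Y\models\neg\varphi$, without invoking that Spoiler has one.
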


{\color{black} A complete proof of this corollary can be found in~\cite{strangelogic,uspekhi}. }


\subsection{Subgraphs and extensions}
\begin{defin}
Consider a graph property $\mathcal{L}$.
\begin{itemize}
\item $\mathcal{L}$ is called \textit{increasing} if, for any pair of graphs \textcolor{black}{$G$, $G'$} with $V(G) = V(G')$ \textcolor{black}{and $E(G) \subset E(G')$}, $G \in \mathcal{L}$ implies $G' \in\mathcal{L}$. 
\item $\mathcal{L}$ is called \textit{decreasing} if, for any pair of graphs \textcolor{black}{$G$, $G'$} with $V(G) = V(G')$ \textcolor{black}{and $E(G) \supset E(G')$}, $G \in \mathcal{L}$ implies $G' \in\mathcal{L}$.
\end{itemize}
Increasing and decreasing properties are called {\it monotone}.\\
\end{defin}

In 1987, Bollob\'{a}s and Thomason proved that each increasing property has a {\it threshold function} \cite{bollobas}.\\

\begin{defin} Consider a monotone graph property $\mathcal{L}$.
\begin{itemize}
    \item $p_0=p_0(n)$ is called \textit{a threshold function} for an increasing property $\mathcal{L}$, if the following holds: 

\begin{center}
$p = o(p_0)$ $\Rightarrow$ $\lim\limits_{n\rightarrow\infty}\p (G(n,p) \in\mathcal{L}) =0$; $\quad p_0 = o(p)$ $\Rightarrow$ $\lim\limits_{n\to\infty}\p (G(n,p) \in \mathcal{L})=1$.
\end{center}

\item $p_0=p_0(n)$ is called \textit{a threshold function} for a decreasing property $\mathcal{L}$, if the following holds: 

\begin{center}
$p = o(p_0)$ $\Rightarrow$ $\lim\limits_{n\rightarrow\infty}\p (G(n,p) \in\mathcal{L}) =1$; $\quad p_0 = o(p)$ $\Rightarrow$ $\lim\limits_{n\to\infty}\p (G(n,p) \in \mathcal{L})=0$.
\end{center}

\end{itemize}

\end{defin}


Clearly, the property of containing a subgraph isomorphic to a given one is increasing and expressed in FO. For such a property, the threshold is known. Let $G$ be a graph with $v(G):=|V(G)|$ vertices and $e(G):=|E(G)|$ edges. Recall that the \textit{density of $G$} is $\rho(G) = \frac{e(G)}{v(G)}$.  The \textit{maximal density} of $G$ is $\rho^{\max} (G) = \max\limits_{H \subseteq G} \rho(H)$. Consider the property $\mathcal{L}_G$ of containing a subgraph \textcolor{black}{(not necessarily induced)} isomorphic to $G$. \\

\begin{theorem}[A. Ruci\'{n}ski, A. Vince, 1985, \cite{rucinski}] 
$n^{-1/\rho^{\max}(G)}$ is threshold for $\mathcal{L}_G$.\\ \label{theorem_no_dense_subgraphs}
\end{theorem}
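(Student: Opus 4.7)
The plan is to prove the two halves of the threshold statement separately, using a first moment argument for the zero-statement (driven by a densest subgraph of $G$) and a second moment argument for the one-statement (driven by $G$ itself).

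For the zero-statement, assume $p = o(n^{-1/\rho^{\max}(G)})$ and pick a subgraph $H \subseteq G$ attaining $\rho(H) = \rho^{\max}(G)$. Since every copy of $G$ contains a copy of $H$, we get $\prob(G(n,p) \in \mathcal{L}_G) \leq \prob(G(n,p) \in \mathcal{L}_H)$. Let $X_H$ count labelled copies of $H$ in $G(n,p)$, so $\mathbb{E}[X_H] = \Theta(n^{v(H)} p^{e(H)}) = \Theta\bigl((np^{\rho(H)})^{v(H)}\bigr)$. The hypothesis gives $np^{\rho^{\max}(G)} \to 0$, so $\mathbb{E}[X_H] \to 0$, and Markov's inequality closes this direction.

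For the one-statement, assume $p = \omega(n^{-1/\rho^{\max}(G)})$ and let $X$ count labelled copies of $G$ in $G(n,p)$. Because $\rho(G) \leq \rho^{\max}(G)$, we have $n^{-1/\rho(G)} \leq n^{-1/\rho^{\max}(G)} \ll p$, so $np^{\rho(G)} \to \infty$ and $\mathbb{E}[X] = \Theta((np^{\rho(G)})^{v(G)}) \to \infty$. I then expand
$$
\mathbb{E}[X^2] = \sum_{H_1, H_2} \prob(H_1 \cup H_2 \subseteq G(n,p))
$$
and partition ordered pairs of copies $(H_1,H_2)$ by the isomorphism type of the subgraph $F \subseteq G$ they span in common. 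Pairs with $F = \emptyset$ (i.e.\ vertex-disjoint copies) contribute $(1+o(1))\mathbb{E}[X]^2$; for each nonempty $F$ the contribution is $\Theta(n^{2v(G) - v(F)} p^{2e(G) - e(F)})$. Dividing by $\mathbb{E}[X]^2 = \Theta(n^{2v(G)} p^{2e(G)})$ leaves $\Theta(n^{-v(F)} p^{-e(F)})$, which equals $\Theta\bigl((np^{\rho(F)})^{-v(F)}\bigr)$ when $F$ has an edge and is at most $n^{-1}$ otherwise. Since $\rho(F) \leq \rho^{\max}(G)$ gives $np^{\rho(F)} \to \infty$, each of the finitely many summands is $o(1)$, so $\mathrm{Var}(X) = o(\mathbb{E}[X]^2)$ and Chebyshev's inequality yields $\prob(X=0) \to 0$.

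The main obstacle is organising the second-moment sum cleanly: one must group every ordered pair of labelled copies of $G$ by the isomorphism class of the subgraph they share, verify that disjoint pairs account for the dominant $(1+o(1))\mathbb{E}[X]^2$ term, and then bound each remaining class uniformly. Once this decomposition is in place, the estimate on each class reduces to the single inequality $\rho(F) \leq \rho^{\max}(G)$, which is exactly the definition of $\rho^{\max}$.
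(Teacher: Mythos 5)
Your argument is correct: the first-moment bound on a densest subgraph $H$ gives the zero-statement, and the second-moment computation, with ordered pairs of copies grouped by the isomorphism type of their (possibly empty) intersection $F\subseteq G$ and each overlapping class controlled by $\rho(F)\leqslant\rho^{\max}(G)$, gives the one-statement; this is the standard proof of the Ruci\'{n}ski--Vince/Bollob\'{a}s subgraph-threshold theorem. The paper itself offers no proof of this statement (it is quoted as a known result with a citation to \cite{rucinski}), so there is nothing to contrast with: your route coincides with the classical argument in the cited literature.
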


\textcolor{black}{For $\alpha \in [\frac 35, \frac 35 + \varepsilon)$ and $\varepsilon>0$ small enough, Theorem \ref{theorem_no_dense_subgraphs} implies that $G(n, n^{-\alpha})$  contains a $4$-clique (as $3/5<2/3$) and does not contain a $5$-clique (as $3/5>1/2$) a.a.s. Moreover, any graph with four vertices appears in $G(n, n^{-\alpha})$ a.a.s. As noted in Section~\ref{intro}, we are looking for sentences from FO$_4$ that are not existential.\\
}

Consider graphs $G \supseteq H$, $\tilde G \supseteq \tilde H$ and positive integers $k\leqslant\ell$ such that
\begin{gather*}
V(H) = \set{x_1, \ldots, x_k},\,\, V(G) = \set{x_1, \ldots, x_{\ell}}, 
\\
V(\tilde H) = \set{\tilde x_1, \ldots, \tilde x_k},\,\, V(\tilde G) = \set{\tilde x_1, \ldots, \tilde x_{\ell}}.
\end{gather*}

\begin{defin}
The graph $\tilde G$ is a \textit{$(G,H)$-extension} of the graph $\tilde H$, if, for every $i\in[\ell]\setminus[k]$, $j\in[\ell]$,
$$\{x_i, x_j\} \in E(G) \setminus E(H) \Rightarrow \{\tilde x_i, \tilde x_j\} \in E(\tilde G) \setminus E(\tilde H).$$
If, for every $i\in[\ell]\setminus[k]$, $j\in[\ell]$,
$$\{x_i, x_j\} \in E(G) \setminus E(H) \Leftrightarrow \{\tilde x_i, \tilde x_j\} \in E(\tilde G) \setminus E(\tilde H),$$
then $\tilde G$ is a \textit{strict $(G, H)$-extension} of $\tilde H$, and the pairs $(G,H)$ and $(\tilde G, \tilde H)$ are \textit{isomorphic}.

Moreover, if $\tilde x_1, \ldots, \tilde x_k$ are not necessarily distinct (but $\tilde x_{k+1}, \ldots, \tilde x_{\ell}$ are all distinct), then $\tilde G$ is called a \textit{(strict) generalised $(G,H)$-extension of $\tilde H$}. \textcolor{black}{If a generalized extension is strict, the pairs $(G,H)$ and $(\tilde G, \tilde H)$ are \textit{generically isomorphic}}.\\
\end{defin}

\textcolor{black}{
Let us clarify the latter definitions. For both regular and generalised extensions, we 
do not forbid additional adjacencies in $E(\tilde G) \setminus E(\tilde H)$. In contrast, an extension is strict, if all pairs of vertices outside ${V(\tilde H)\choose 2}$ have prescribed adjacency relations. In all these definitions, edges inside $H$ and $\tilde H$ do not play a role. In particular, the ismorpism of pairs $(G, H)$ and $(\tilde G, \tilde H)$ does not imply that either $H$ and $\tilde H$ or $G$ and $\tilde G$ are isomorphic.}

The definitions above depend on the labelling of vertices of graphs $G$, $H$, $\tilde G$, $\tilde H$. Throughout the paper, the labelling follows from the context.\\

Fix a positive number $\alpha$. For graphs $H \subseteq G$, define
\begin{gather*}
V(G,H) = V(G) \setminus V(H),\quad E(G,H) = E(G) \setminus E(H),\\
v(G,H) = |V(G,H)|,\quad e(G,H) = |E(G,H)|,\\
f_{\alpha}(G,H) = v(G,H) - \alpha e(G,H).
\end{gather*}

Notice that for graphs $H$, $S$, $G$ such that $H \subseteq S \subseteq G$, we have the obvious equalities (used frequently in the paper):
$v(G,H) = v(G,S) + v(S,H)$ and $e(G,H) = e(G,S) + e(S,H)$, which imply
$f_{\alpha}(G,H) = f_{\alpha}(G,S) + f_{\alpha}(S,H)$ for all $\alpha>0$.\\

\begin{defin}
Let $H \subset G$ be a pair of graphs and let $\alpha$ be a positive number.
\begin{itemize}
    \item If for each graph $S$ such that $H \subset S \subseteq G$, the inequality $f_{\alpha}(S,H) > 0$ holds then the pair $(G,H)$ is called \textit{$\alpha$-safe}.
    
    \item If for each graph $S$ such that $H \subseteq S \subset G$ the inequality $f_{\alpha}(G,S) < 0$ holds then the pair $(G, H)$ is called \textit{$\alpha$-rigid}.
    
    \item If for each graph $S$ such that $H \subset S \subset G$ the inequality $f_{\alpha}(S,H) > 0$ holds, but also $f_{\alpha}(G,H) = 0$ then the pair $(G,H)$ is called \textit{$\alpha$-neutral}. Note that, if $(G,H)$ is \textit{$\alpha$-neutral}, then, for each graph $S$ such that $H \subset S \subset G$, the relation $f_{\alpha}(G,S) < 0$ holds.
\end{itemize}

If the pair $(G,H)$, $H \subset G$, is $\alpha$-safe, $\alpha$-rigid or $\alpha$-neutral, we call $G$ respectively an \textit{$\alpha$-safe, $\alpha$-rigid or $\alpha$-neutral extension} of $H$.
\end{defin}

\textcolor{black}{Observe that, in a standard way, from the union bound and the method of moments the following conclusion could be derived. It is likely that a fixed subgraph has an $\alpha$-safe extension, while it is not likely that it has an $\alpha$-rigid extension in $G(n, n^{-\alpha})$. The number of $\alpha$-neutral extensions has a non-trivial limit --- it converges to a Poisson random variable. The first two notions appear to be useful for proving 0-1 laws for irrational $\alpha$~\cite{spencershelah}, while $\alpha$-netrual extensions are used in the proof of Theorem~\ref{zhukovskiiless}.}


\textcolor{black}{Note} that if $H \subset G \subset G'$, $(G, H)$ is $\alpha$-safe ($\alpha$-rigid) and $(G', G)$ is $\alpha$-safe ($\alpha$-rigid) then $(G', H)$ is also $\alpha$-safe ($\alpha$-rigid).\\

Hereafter, for a graph $G$ and a subset of its vertices $V \subseteq V(G)$,  $G|_{V}$ denotes the subgraph of $G$ induced on $V$. The following claim is proven in~\cite{yu-rovikov}.

\begin{claim}[\cite{yu-rovikov}] \label{helpKTokralphasafe}
Let $\alpha>0$ be rational. Consider graphs $W \subset U \subseteq G$ such that the pair $(G, U)$ is $\alpha$-neutral, and the pair $(U, W)$ is $\alpha$-safe ($W$ can be empty; in this case, the pair $(U,W)$ is $\alpha$-safe whenever $\rho^{\max}(U)<1/\alpha$). Let there also be at least one edge between $V(G) \setminus V(U)$ and $V(U) \setminus V(W)$ in $G$. Then the pair $(G, W)$ is also $\alpha$-safe.  \\
\end{claim}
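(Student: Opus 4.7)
The plan is to let $S$ be an arbitrary graph with $W \subsetneq S \subseteq G$ and prove $f_\alpha(S, W) > 0$ by splitting on how $S$ sits relative to $U$. If $S \subseteq U$, the conclusion is immediate from $\alpha$-safety of $(U, W)$, so assume $S \not\subseteq U$. Introduce $T := S \cup U$ and $R := S \cap U$ (both vertex- and edge-wise, inside $G$), producing the chains $W \subseteq R \subseteq U \subseteq T \subseteq G$ and $W \subseteq S \subseteq T$.

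The central step is an algebraic identity. From $V(T) \setminus V(S) = V(U) \setminus V(R)$ and $E(T) \setminus E(S) = E(U) \setminus E(R)$ one gets $f_\alpha(T, S) = f_\alpha(U, R)$, and combining this with additivity of $f_\alpha$ along both chains yields
\[
f_\alpha(S, W) = f_\alpha(T, W) - f_\alpha(T, S) = f_\alpha(T, U) + f_\alpha(R, W).
\]
If $T \subsetneq G$, then $U \subsetneq T$ (since $S \not\subseteq U$), so $\alpha$-neutrality of $(G, U)$ gives $f_\alpha(T, U) > 0$, while $f_\alpha(R, W) \geq 0$ either trivially (if $R = W$) or by $\alpha$-safety of $(U, W)$; the sum is positive.

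The remaining case $T = G$ is the heart of the argument, and this is where the crossing-edge hypothesis enters. Here $f_\alpha(T, U) = 0$, so we must show $f_\alpha(R, W) > 0$. Note that $\alpha$-safety of $(U, W)$ forbids $U$ from carrying any edges on $V(W)$ outside $E(W)$ (else the one-edge extension of $W$ inside $U$ would already give $f_\alpha < 0$), so if $V(R) = V(W)$ then $R = W$. In that case $V(G) \setminus V(U) = V(S) \setminus V(U)$ and $V(U) \setminus V(W) = V(U) \setminus V(S)$, yet any edge of $G = S \cup U$ with endpoints in these two sets must lie in $E(S)$ or in $E(U)$, each forcing one endpoint to belong to a set it is known to lie outside — contradicting the hypothesized crossing edge. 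Thus $V(R) \supsetneq V(W)$, and $\alpha$-safety of $(U, W)$ gives $f_\alpha(R, W) > 0$. The main obstacle is precisely this last subcase: neutrality has exhausted the budget $f_\alpha(T, U)$, so the crossing-edge hypothesis must be leveraged to produce the strict inclusion $V(R) \supsetneq V(W)$, and this vertex-set bookkeeping is the only delicate point; the $W = \emptyset$ case proceeds identically, with $\rho^{\max}(U) < 1/\alpha$ taking the place of $\alpha$-safety of $(U, \emptyset)$.
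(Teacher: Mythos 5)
Your proof is correct: the decomposition via $T=S\cup U$, $R=S\cap U$, the identity $f_\alpha(S,W)=f_\alpha(T,U)+f_\alpha(R,W)$ (using $f_\alpha(T,S)=f_\alpha(U,R)$ and additivity), the treatment of the two subcases $T\subsetneq G$ and $T=G$, and the use of the crossing edge to exclude $V(S)\cap V(U)=V(W)$ when $S\cup U=G$ all check out, as does the $W=\varnothing$ variant with $\rho^{\max}(U)<1/\alpha$. The paper itself omits the proof of this claim, deferring it to Yarovikov's earlier work, and your argument is precisely the natural intersection/union bookkeeping one would expect there, so there is nothing further to reconcile.
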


\textcolor{black}{We also need to introduce the notion of a generic extension (\cite{alonspencer}, \cite{strangelogic}). Roughly speaking, an extension is called $t$-generic if it has no rigid ``non-trivial'' extensions, where we call an extension trivial if it extends solely the smaller graph.}

\begin{defin}
Let $\alpha$ positive be fixed, and let $t$ be a positive integet. 

\begin{itemize}
    \item A single graph $G$ is called \textit{$t$-generic} if there are no $\alpha$-rigid extensions of $G$ with \textcolor{black}{at most} $t$ vertices.

    \item Let $H \subset G \subset \Gamma$ be a triple of nested subgraphs. The graph $G$ is called a \textit{$t$-generic extension} of $H$ if for each $K: G \subset K \subset \Gamma$ such that $(K, G)$ is $\alpha$-rigid and $v(K, G) \leqslant t$, there are no edges between $V(K, H)$ and $V(G, H)$. 
\end{itemize}
\end{defin}



\textcolor{black}{To clarify the latter definition, we give an example of a generic extension.} 
\begin{figure}[h!]
    \centering
    \includegraphics[scale=0.4,]{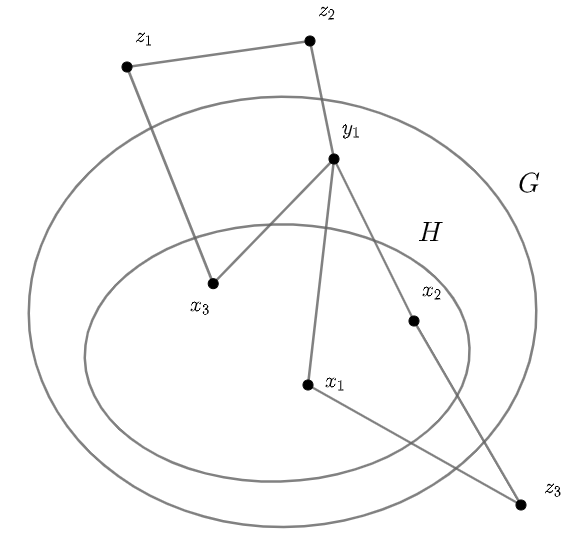}
    \caption{A generic extension}
    \label{fig:generic}
\end{figure}


\textcolor{black}{On Figure \ref{fig:generic}, $G$ is a $2$-generic extension of $H$ for $\alpha=3/5$, since the extension formed by $z_1$ and $z_2$ is $3/5$-safe and the extension formed by $z_3$ does not have edges to $V(G, H)$. Moreover, $G$ is not a $2$-generic extension of $H$ for $\alpha=1/3$ since the extension formed by $z_1$ and $z_2$ is $1/3$-rigid. However, $G$ is a $1$-generic extension for $\alpha=1/3$ (and, obviously, $3/5$).}\\

{\color{black} We will frequently use the following theorem,  that was implicitly proven in~\cite{spencershelah}. For convenince, we refer to~\cite[Lemma 10.4.6]{alonspencer}}.

\begin{theorem}[S. Shelah, J. Spencer, 1988, \cite{spencershelah}] \label{G,H,K,T} 
\textcolor{black}{Let a pair $(G, H)$, $H \subset G$, be $\alpha$-safe, $t\in\mathbb{N}$, $r = v(H)$.
Then a.a.s. each subgraph of $G(n, n^{-\alpha})$ on $r$ vertices has a strict $t$-generic $(G,H)$-extension. } 
\end{theorem}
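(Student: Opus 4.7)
The plan is to combine a concentration bound on the number of strict $(G,H)$-extensions with a first-moment estimate on extensions that fail to be $(K,T)$-maximal. I would begin with a fixed candidate $\tilde H$ on $r = v(H)$ vertices and estimate $X_{\tilde H}$, the number of strict $(G,H)$-extensions of $\tilde H$ in $G(n,n^{-\alpha})$. A standard first-moment calculation gives $\mathbb{E}[X_{\tilde H}] = \Theta(n^{f_\alpha(G,H)})$, which grows polynomially since $(G,H)$ is $\alpha$-safe. A second-moment argument, exploiting $f_\alpha(S,H)>0$ for every $H \subsetneq S \subseteq G$ to control overlaps of two strict extensions and then sharpened via higher moments, yields $X_{\tilde H} \geq c_1 n^{f_\alpha(G,H)}$ with failure probability $o(n^{-r})$. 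Union-bounding over the $O(n^r)$ choices of $\tilde H$ then produces, a.a.s., at least $c_1 n^{f_\alpha(G,H)}$ strict $(G,H)$-extensions above every $r$-vertex subgraph.

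For the maximality condition, I would say that a strict $(G,H)$-extension $\tilde G$ of $\tilde H$ is \emph{$(K,T)$-bad} if some subgraph $\tilde T \subset \tilde G$ with $v(\tilde T) = v(T)$ and $V(\tilde T) \not\subseteq V(\tilde H)$ admits a $(K,T)$-extension $\tilde K$ in $G(n,n^{-\alpha}) \setminus (\tilde G \setminus \tilde T)$ with no edges from $V(\tilde K,\tilde T)$ to $V(\tilde G,\tilde T)$. Counting quadruples $(\tilde H, \tilde G, \tilde T, \tilde K)$ gives expected count $O(n^{r + f_\alpha(G,H) + f_\alpha(K,T)})$. By $\alpha$-rigidity of $(K,T)$ and finiteness of rigid pairs up to isomorphism with $v(K,T) \leq t$, there exists a uniform $\delta = \delta(\alpha, t) > 0$ with $f_\alpha(K,T) \leq -\delta$. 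A sharpened Markov / higher-moment bound then shows, a.a.s. and uniformly over $\tilde H$, that the number of $(K,T)$-bad strict extensions of $\tilde H$ is $O(n^{f_\alpha(G,H) - \delta/2})$, which is negligible compared to $c_1 n^{f_\alpha(G,H)}$.

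A final union bound over the finitely many rigid pairs $(K,T)$ with $v(K,T) \leq t$ shows that a.a.s., for every $\tilde H$, the total number of bad strict $(G,H)$-extensions is $o(n^{f_\alpha(G,H)}) < X_{\tilde H}$; hence at least one strict extension $\tilde G$ of $\tilde H$ is simultaneously $(K,T)$-maximal for every such rigid pair, which is exactly the theorem's conclusion. The principal technical obstacle will be making the concentration estimates hold with failure probability $o(n^{-r})$ uniformly in $\tilde H$: naive second moment delivers only inverse-polynomial tails, so one must sharpen either via higher-moment / Janson-type inequalities or via a careful induction on the rooted-extension structure as in the original Shelah--Spencer argument~\cite{spencershelah} (cf.~\cite[Chapter 10]{alonspencer}), which is precisely what allows the uniform bound to survive the union bound over all $\tilde H$.
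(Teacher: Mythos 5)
The paper does not actually prove this statement---it is quoted as a known result of Shelah and Spencer, with the reader referred to \cite{spencershelah} and \cite[Lemma 10.7.6]{alonspencer}---and your outline is essentially the standard argument behind that citation: uniform concentration of the number of strict extensions of an $\alpha$-safe pair over all $\Theta(n^r)$ roots, combined with a count showing that extensions admitting an $\alpha$-rigid attachment (whose $f_\alpha$ is uniformly bounded away from $0$ from below by $-\delta$) are of strictly smaller order. Your closing caveat correctly isolates the one genuinely delicate point: naive second-moment or Markov bounds do not survive the union bound over roots, and the cited sources resolve this via the counting-extensions/finite-closure induction of Shelah--Spencer and Spencer rather than by brute-force higher moments, exactly as you indicate.
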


\textcolor{black}{In particular, if $H$ is empty, we obtain that a.a.s. in $G(n, n^{-\alpha})$ there is a $t$-generic graph isomorphic to a given graph $G$ with $\rho(G) < 1/\alpha$.\\}

\section{New statements and constructions}
\label{sc:new_constructions}

In this section, for a pair of graphs $H\subset G$, we let $f(G,H):=f_{3/5}(G,H)$. We also denote $f(G)=v(G)-\frac{3}{5}e(G)$.

To prove Theorem~\ref{main_theorem}, we will show that there exists $\varepsilon>0$ such that, for $\alpha\in(\frac{3}{5},\frac{3}{5}+\varepsilon)$,  a.a.s. Duplicator has a winning strategy in the game on $G(n,n^{-\alpha})$, $G(m,m^{-\alpha})$ in 4 rounds. In this section, we define a finite set $\mathcal{G}$ of graphs that play a role of `bad' neighbourhoods of initial moves of Spoiler.



\subsection{Building the set $\G$}
The definition of $\G$ is loosely based on a definition of a similar set of graphs constructed in \cite{yu-rovikov}. 

Since we are only interested in isomorphism classes of graphs, it will be convenient to assume that all graphs in $\G$ contain a common singleton subgraph $H$ with one vertex: $V(H) = \set{z}$. 

Hereinafter we denote by $I(A)$ the indicator function that equals 1 if the event $A$ holds, and 0 otherwise.

\begin{defin}
Let $T$ be some graph such that $H \subset T$, $\gamma\in\mathbb{Z}_{>0}$. We call a pair of graphs $(K,T)$, $T \subset K$, \textit{$\gamma$-bad} if 
\begin{enumerate}[label=\alph*)]
\item $(K,T)$ is $\frac35$-rigid, 
\item $v(K,T) \leqslant \textcolor{black}{15}\cdot 2^{\gamma-1} I(\gamma\in\{1,2\})+\textcolor{black}{60}I(\gamma\geq 3)$\label{vkt},
\item $v(T) \geqslant 2$,
each vertex of $V(K) \setminus V(T)$ has a path to $V(T) \setminus \set{z}$ in the graph $K|_{V(K)\setminus \{z\}}$.\label{prop-3-bad-pairs}\\
\end{enumerate}
\end{defin}
Note that we bound $v(K,T)$ from above, since we are only interested in relatively small extensions when playing the Ehrenfeucht game in 4 rounds.\\

We define $\mathcal{G}$ as the union of sets of graphs $\mathcal{G}_i$, $i \in \mathbb{N}$, defined inductively as follows.\\

\noindent {\bf Definition (sets $\mathcal{G}_i$ and $\mathcal{G}$).}
Let $\G_0$ be a {\it minimum} set of graphs \textcolor{black}{$G$} with the common subgraph $H$ such that, if $\rho^{\max}(G)<\frac{5}{3}$, $v(G, H) \leqslant 10$, the pair $(G, H)$ is $\frac35$-neutral or $\frac35$-rigid, and $G|_{V(G) \setminus \set{z}}$ is a connected graph, then there is a graph isomorphic to $G$ in $\G_0$ (the image of $z$ is $z$ itself). $\G_0$ is finite since the set of all non-isomorphic pairs $(G,H)$ with $v(G,H)\leq 10$ is finite.

For $i=1,2,\ldots$, let $\mathcal{G}_i$ be a {\it minimum} set of graphs with the common subgraph $H$ and the following property. If $G \in \G_{i-1}$ and a graph $G' \supset G$ is such that the pair $(G', G)$ is $i$-bad and $\rho^{\max}(G')<\frac{5}{3}$, then there is a graph isomorphic to $G'$ in $\G_i$ (the image of $z$ is $z$ itself). Set $\G:=\bigsqcup_{i=0}^{\infty}\mathcal{G}_i$.

Without loss of generality, let us assume that any pair of graphs from $\G$ have $z$ as their only common vertex.\\

Those definitions are partially motivated by the fact that the maximal density of each graph in $\G$ should be less than $5/3$ (otherwise, such graphs do not appear as subgraphs in the considered random graphs).

Notice that, according to this definition, any graph of $\mathcal{G}$
belongs to one and only one set $\mathcal{G}_i$. For convenience, we will often write
$G\in \mathcal{G}$ (resp. $G\in \mathcal{G}_i$) to signify that a graph $G$ has an
isomorphic copy in $\mathcal{G}$ (resp. $\mathcal{G}_i$).\\


\textcolor{black}{Consider} a sequence $H\subset G_0\subset G_1\subset\ldots\subset G_5$ such that $v(G_0,H)\leq 10$, $f(G_0,H)\leq 0$, $(G_i,G_{i-1})$ are $i$-bad for all $i\in\{1,2,3,4,5\}$. We have $f_{3/5}(G_i,G_{i-1}) < 0$ for all $i$, which implies $e(G_i,G_{i-1})\geq
\frac 53 v(G_i,G_{i-1})+\frac 13$ (since the functions $e(\cdot)$ and $v(\cdot)$ have integer
values). We obtain

$$
e(G_5)=e(G_0)+\sum_{i=1}^5 e(G_i,G_{i-1})\geq \frac{5}{3}v(G_0,H)+\sum_{i=1}^5 \left[\frac{5}{3}v(G_i,G_{i-1})+\frac{1}{3}\right]=
$$
$$
\frac{5}{3}v(G_5,H)+\frac{5}{3}=\frac{5}{3}v(G_5)-\frac{5}{3}+\frac{5}{3}=\frac{5}{3}v(G_5).
$$
Therefore, $f(G_5)=v(G_5) - \frac35 e(G_5)\leqslant 0$, which implies
$\rho^{\max}(G_5) \geq \frac{e(G_5)}{v(G_5)} = \frac 53$. According to the definition of the sets of graphs $\mathcal{G}_i$, this yields $\mathcal{G}_i = \varnothing$ for all $i\geq 5$.

Thus, $\mathcal{G}$ is the (finite) union of the $\mathcal{G}_i$ for $i\leq 4$. As a consequence, for each $G\in \mathcal{G}$, \textcolor{black}{there is an integer} $i \in \{0,1,2,3,4\}$
and a sequence of graphs $H \subset G_0 \subset \dots \subset G_i = G$ such that
$G_j \in \mathcal{G}_j$, for $0 \leq j \leq i$.
We will use the subgraphs $G_j$ in the properties 1-7 of this subsection.\\  

Let us list several properties of $\mathcal{G}$. 

\gpr\label{Prop:f_from_above} $f(G)\leq 1 - \frac i5$ for $G \in \mathcal{G}_i$, $i\in\{0,1,2,3,4\}$.

\begin{proof}
Since the pair $(G_0,H)$ is $\frac35$-neutral or $\frac35$-rigid we have $f(G_0,H)\leq 0$.
It implies $f(G_0)\leq f(H) = 1$ as required. Moreover, for $i\in \{1,2,3,4\}$ we have
$f(G_i)-f(G_{i-1})=f(G_i,G_{i-1}) < 0$, since $(G_i,G_{i-1})$ is $\frac35$-rigid. Note that
$f(G_i,G_{i-1})$ is of the form $x/5$ for an integer $x$.
Thus, we get $f(G_i)-f(G_{i-1}) = f(G_i,G_{i-1}) \leq -1/5$. Therefore, we inductively
obtain $f(G_i) \le 1-\frac i5$.
\end{proof} 

\gpr For $G \in \G$, we have $\rho^{\max}(G) < \frac53$, since this is a necessary condition for a graph $G$ to belong to any set $\mathcal{G}_i$.

\gpr \label{graphs are small} For $G \in \G$, we have $v(G)\leq \textcolor{black}{176}$ vertices.

\begin{proof}
We have $G \in \mathcal{G}_i$ with a sequence of graphs $H \subset G_0
\subset \dots \subset G_i = G$, with each $G_j \in \mathcal{G}_j$.
We also have $v(G_i)= v(G_i,G_{i-1})+\dots + v(G_0,H) + v(H)$ which is at most {\color{black} $1 + 10 + 15
+ 30 + 2 \cdot 60=176$} by condition \ref{vkt}.
\end{proof}


%
%
%

\gpr \label{graphs are dense} For any $G \in \G\setminus\mathcal{G}_0$, the pair $(G, H)$ is $\frac35$-rigid. 
\begin{proof}
If $G \in \G\setminus\mathcal{G}_0$, then there exists a chain of graphs $H \subset G_0 \subset G_1 \ldots \subset G_i = G$ such that $(G_0, H)$ is $\frac 35$-rigid or $\frac 35$-neutral and $(G_j, G_{j-1})$ is $j$-bad, for $j=1,\ldots,i$. Consider an arbitrary graph $S$ such that $H \subseteq S \subset G$. It is sufficient to show that $f(G,S) < 0$. Let $S_j = G_j \cap S$. Notice that, since $G = G|_{V(G_i) \cup V(S)}$, we have 
\begin{multline*}
    f(G,S) = f(G|_{V(G_0) \cup V(S)}, S) + 
    \\ 
    + f(G|_{V(G_1) \cup V(S)}, G|_{V(G_0) \cup V(S)}) + \ldots + f(G|_{V(G_i) \cup V(S)}, G|_{V(G_{i-1}) \cup V(S)}).
\end{multline*}
Note that the terms on the right side of the equality are non-positive. Indeed, $f(G|_{V(G_0) \cup V(S)}, S) \leqslant f(G_0, S_0) \leqslant 0$ (the latter inequality follows from the fact that $(G_0, H)$ is $\frac35$-rigid or $\frac35$-neutral). Furthermore, 
\begin{equation}
f(G|_{V(G_j) \cup V(S)}, G|_{V(G_{j-1}) \cup V(S)}) \leqslant f(G_j, G|_{V(G_{j-1})\cup V(S_j)}) \leqslant 0,
\label{eq:ineq_two_f}
\end{equation}
since $(G_j, G_{j-1})$ is $\frac35$-rigid and for all $1 \leqslant j \leqslant i$. 

Assume that all the terms are equal to zero. Since, for $j\geq 1$, $f(G|_{V(G_j) \cup V(S)}, G|_{V(G_{j-1}) \cup V(S)}) = 0$, due to (\ref{eq:ineq_two_f}), we obtain $f(G_j, G|_{V(G_{j-1})\cup V(S_j)}) = 0$. \textcolor{black}{Since $V(G_{j-1}) \subseteq V(G_{j-1})\cup V(S_j) \subseteq V(G_j)$ and $(G_{j}, G_{j-1})$ is $\frac 35$-rigid,} the latter equality implies $G_j = S_j$. Therefore, $G_j \setminus G_{j-1} \subseteq S$ for all $j \geqslant 1$. Thus, $V(G) = V(S) \sqcup (V(G_0) \setminus V(S_0))$. Moreover, if $S_0 \neq H$ then $f(G|_{V(G_0) \cup V(S)}, S) < 0$, since $(G_0, H)$ is $\frac35$-rigid or $\frac35$-neutral. Finally, if $S_0 = H$ then $f(G|_{V(G_0) \cup V(S)}, S) = f(G_0, H) - \frac35 t$ where $t$ is the number of edges between $V(G_0) \setminus V(H)$ and $V(S) \setminus V(H)$. Obviously, $t > 0$ since there is at least one edge between $V(G_0) \setminus V(H)$ and $V(G_1) \setminus V(G_0)$ (by Property \ref{prop-3-bad-pairs} from the definition of a bad pair). We obtain that $f(G|_{V(G_0) \cup V(S)}, S)< f(G_0, H) \leqslant 0$, which contradicts the assumption.
\end{proof}

\gpr \label{2_joined} For each $G \in \G$, the graph $G|_{V(G) \setminus \{z\}}$ is connected.
\begin{proof}
Let us prove the statement by induction on $i$. For $i=0$ and $G \in \mathcal{G}_i$, the property is trivial from the definition of $\mathcal{G}_0$.

Let us perform the induction step. If $G=G_i \in \mathcal{G}_i$, there exists $G_{i-1} \in \mathcal{G}_{i-1}$ such that $(G_i, G_{i-1})$ is an $i$-bad pair.  
By definition, each vertex of $V(G_i) \setminus V(G_{i-1})$ has a path to $V(G_{i-1}) \setminus \{z\}$. Moreover, $G_{i-1}|_{V(G_{i-1}) \setminus \{z\}}$ is connected. This completes the induction step.
\end{proof}

Let us also prove the following property.

{\color{black}
\gpr \label{two-bad-subgraphs-cant-intersect-kjtj} Let $G_1$, $G_2\in\G$. Let $K$ be the graph obtained from $G_1 \cup G_2$ (recall that $V(G_1) \cap V(G_2) = z$) by extending it with a $\frac{3}{5}$-rigid pair $(\tilde K, \tilde T)$ with $v(\tilde K, \tilde T) \leqslant 5$ such that each vertex of $V(K)\setminus (V(G_1) \cup V(G_2))$ has a path to both $V(G_1) \setminus \{z\}$ and $V(G_2) \setminus \{z\}$ in $K|_{V(K) \setminus \{z\}}$. Then either $\rho^{\max}(K)\geqslant 5/3$ or $K \in \mathcal{G}$.

\begin{proof}
Without loss of generality, let $G_1 \in \mathcal{G}_i$, $G_2 \in \mathcal{G}_j$, $i \leqslant j$. Note that $i \leqslant 1$, otherwise
$$f(G)=f(G_1)+f(G_2) - 1 + f(\tilde K, \tilde T) \leq 1 - \frac{i_1}{5} + 1 - \frac{i_2}{5} - 1 - \frac{1}{5} = \frac{4}{5}-\frac{i_1+i_2}{5} \leqslant 0,$$
which implies $\rho(G) \geqslant \frac 53$. Denote $T = K|_{V(G_1) \cup V(G_2)}$. Let us prove that $(K, G_2)$ is an $(i+1)$-bad pair. 

We first prove that $(K, G_2)$ is $\frac 35$-rigid. Consider a graph $S$ such that $G_2 \subseteq S \subset K$. We should verify that $f(K, S) < 0$. Consider three cases: 
$V(G_1) \subseteq V(S)$, $\{z\} \subsetneq V(G_1) \cap V(S) \subsetneq V(G)$ or $V(G_1) \cap V(S) = \{z\}$.

In the first case, we have $T \subseteq S \subset K$, which implies that $f(K, S)<0$ since $(K, T)$ is $\frac 35$-rigid.

Consider the second case. We have the identity 
$$f(K, S) = f(K, G_1 \cup S) + f(G_1\cup S, S).$$
Since $v(G_1 \cup S, S) = v(G_1, S \cap G_1)$ and $e(G_1, S \cap G_1) \leqslant e(G_1\cup S, S)$, we obtain $f(G_1 \cup S, S) \leqslant f(G_1, S \cap G_1)$. Moreover, $f(G_1, S \cap G_1) < 0$ since $(G_1, H)$ is $\frac 35$-neutral or $\frac 35$-rigid and $V(S \cap G_1)$ strictly includes $H$. Thus, $f(G_1\cup S, S) < 0$.
Finally, $f(K, G_1\cup S) \leqslant 0$ since $T \subseteq G_1 \cup S$. We therefore obtain that $f(K, S) = f(K, G_1 \cup S) + f(G_1\cup S, S) < 0$.

Consider the third case. We have $f(K, S) = f(K, G_1 \cup S) + f(G_1 \cup S, S)$. As in the previous case, both terms are non-positive. Moreover, if $V(S)\cup V(G_1)=V(K)$ then $f(G_1 \cup S, S) < 0$ since there is at least one edge between $V(G_1) \setminus \{z\}$ and $V(S) \setminus \{z\}$. If, on the other hand, $V(S)\cup V(G_1)\subsetneq V(K)$ then $f(K, G_1 \cup S) < 0$ since $(K, T)$ is $\frac 35$-rigid. In any case, $f(K, S) < 0$. 

Thus, $(K, G_2)$ is $\frac 35$-rigid.

Let us prove that each vertex of $V(K, G_2)$ has a path to $V(G_2) \setminus \{z\}$ in $K|_{V(K) \setminus \{z\}}$. It is true for the vertices of $V(K, T)$ by the definition of $K$. Moreover, each vertex of $V(K, T)$ has a path to at least one vertex of $V(G_1) \setminus \{z\}$. By Property \ref{2_joined}, $G_1$ remains connected after removing $z$, so there is a path from each vertex of $V(G_1) \setminus \{z\}$ to a vertex of $V(K,T)$ and, therefore, to $V(G_2) \setminus \{z\}$, as needed. 

Finally, $v(K, G_2) \leqslant v(G_1, H) + 5$, from which it follows that $v(K, G_2)$ is $(i+1)$-bad (since $i \leqslant 2$). Since $i \leqslant j$, we obtain that either $\rho^{\max}(K)\geqslant 5/3$ or $K \in \mathcal{G}$.
\end{proof}

During the proof of Theorem \ref{main_theorem}, we will require Property \ref{two-bad-subgraphs-cant-intersect-kjtj} for several specific cases. We list these cases below. 

\begin{corollary}
Let $G_1, G_2 \in \G$.
\begin{enumerate}
\item Let $G$ be obtained by joining $G_1\cup G_2$ with a new vertex $v$ that has neighbours $u_1 \in V(G_1)$ and $u_2 \in V(G_2)$, $u_1,u_2\neq z$. Then either $G \in \mathcal{G}$ or $\rho^{\max}(G) \geqslant 5/3$.


\item Let $G$ be the following graph. We start from the union of graphs $G_1$ and $G_2$ merged by $z$. We then add to $G$ distinct vertices $z_1,z_2,z_3$ and $t_1,t_2,t_3$ and build two $3/5$-neutral extensions: 1) an extension of $G_1$ on $z_1, z_2, z_3$ such that each $z_i$ has a path to $V(G_1) \setminus \{z\}$, 2) an extension of $G_2$ on $t_1, t_2, t_3$ such that each $t_i$ has a path to $V(G_2) \setminus \{z\}$. Finally, we fix a non-empty set of indices $J\subseteq \{1,2,3\}$ and identify $z_i$ with $t_i$ for all $i \in J$. Then either $G \in \G$ or $\rho^{\max}(G) \geqslant 5/3$.

\end{enumerate}
\end{corollary}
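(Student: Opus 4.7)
The plan is to apply Property~\ref{two-bad-subgraphs-cant-intersect-kjtj} directly in both parts, taking $\tilde T := G_1 \cup G_2$ (merged at $z$) and $\tilde K := G$. This reduces the corollary to checking three conditions: (i) $v(\tilde K,\tilde T)\le 5$; (ii) the pair $(\tilde K,\tilde T)$ is $\tfrac{3}{5}$-rigid; (iii) every vertex of $V(\tilde K)\setminus V(\tilde T)$ has a path to both $V(G_1)\setminus\{z\}$ and $V(G_2)\setminus\{z\}$ in $\tilde K|_{V(\tilde K)\setminus\{z\}}$. In Part~1 these are immediate: a single new vertex $v$ with $e(\tilde K,\tilde T)=2$ gives $v(\tilde K,\tilde T)=1$, $f(\tilde K,\tilde T)=1-\tfrac{6}{5}=-\tfrac{1}{5}<0$ (with $\tilde T$ the sole intermediate subgraph), and the path condition follows from the direct adjacencies $\{v,u_1\}$ and $\{v,u_2\}$.

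For Part~2, condition (i) holds since $v(\tilde K,\tilde T)=6-|J|\le 5$. For (iii), any vertex in the $G_1$-extension reaches $V(G_1)\setminus\{z\}$ by hypothesis, then passes through the connected graph $G_1\setminus\{z\}$ (Property~\ref{2_joined}) to an identified vertex $z_j=t_j$ with $j\in J\ne\emptyset$, and finally through the $G_2$-extension to $V(G_2)\setminus\{z\}$; symmetrically for $G_2$-extension vertices. To verify (ii) I would decompose $\tilde K=\tilde K_1\cup\tilde K_2$, where $\tilde K_i$ adjoins to $\tilde T$ the vertex and edge data of the $G_i$-extension. Each $(\tilde K_i,\tilde T)$ inherits $\tfrac{3}{5}$-neutrality from the corresponding extension of $G_i$ (enlarging the base from $G_i$ to $\tilde T$ changes neither added vertices nor added edges), and the overlap $\tilde K_1\cap\tilde K_2$ is the graph on $V(\tilde T)\cup C$ (with $C=\{z_i:i\in J\}$) whose extension edges are exactly the $d\le\binom{|J|}{2}$ edges duplicated between the two $G_i$-extensions. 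For any induced subgraph $S$ with $\tilde T\subseteq S\subsetneq\tilde K$ and $S_i:=S\cap\tilde K_i$, inclusion-exclusion combined with the identity $f(A,B)=f(A,\tilde T)-f(B,\tilde T)$ gives
\begin{equation*}
-f(\tilde K,S) = f(S_1,\tilde T) + f(S_2,\tilde T) + f(\tilde K_1\cap\tilde K_2,\tilde T) - f(S_1\cap S_2,\tilde T).
\end{equation*}

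The main obstacle is the ensuing case analysis on the positions of $(S_1,S_2)$ within $\{\tilde T,\text{strictly intermediate},\tilde K_i\}$, showing that the right-hand side above is strictly positive in every admissible case. The only extreme configuration with $S\ne\tilde K$ is $S=\tilde T$, where the right-hand side reduces to $f(\tilde K_1\cap\tilde K_2,\tilde T)=|J|-\tfrac{3d}{5}$; this is positive by the arithmetic estimate $10>3(|J|-1)$ valid for $|J|\le 3$ together with $d\le\binom{|J|}{2}$. In the remaining, non-extreme cases, $\tfrac{3}{5}$-neutrality supplies $f(S_i,\tilde T)\ge\tfrac{1}{5}$ for any strictly intermediate $S_i$, and this, combined with bounds on the number $d-d'$ of duplicates with an endpoint outside $S_1\cap S_2$, yields the required slack. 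Once (ii) is established, Property~\ref{two-bad-subgraphs-cant-intersect-kjtj} delivers the conclusion $G\in\G$ or $\rho^{\max}(G)\ge\tfrac{5}{3}$, as claimed.
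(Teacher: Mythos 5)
Your reduction to Property~\ref{two-bad-subgraphs-cant-intersect-kjtj} with $\tilde T=G_1\cup G_2$ and $\tilde K=G$ is exactly the paper's route, and Part~1 as well as conditions (i) and (iii) of Part~2 are fine. The gap is in the verification of $\frac35$-rigidity of $(G,G_1\cup G_2)$, which is the substance of Part~2. First, your structural claim about the overlap is false: the two neutral extensions extend $G_1$ and $G_2$, both of which contain the common root $z$, so an edge from an identified vertex $z_i=t_i$ to $z$ may lie in both extensions. Hence the duplicated edges need not have both endpoints in $C$, and $d$ can exceed $\binom{|J|}{2}$ (already $d=1$ is possible for $|J|=1$, and $d=3$ for $|J|=2$), so the arithmetic ``$10>3(|J|-1)$ together with $d\le\binom{|J|}{2}$'' does not even justify the case $S=\tilde T$ as written (it can be repaired, since each neutral extension has exactly $5$ added edges and the path condition forces at least one non-duplicated edge per extension, giving $d\le 4$; but you do not do this). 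Second, and more seriously, the ``remaining, non-extreme cases'' are not actually proved: the only quantitative input you give is $f(S_i,\tilde T)\ge\frac15$ for strictly intermediate $S_i$, while the deficit $\frac35(d-d')-\bigl(|J|-|C\cap V(S)|\bigr)$ can reach $\frac45$ (take $|J|=3$, one identified vertex, say $z_3$, outside $S$, and the three duplicated edges $z_3z_1,z_3z_2,z_3z$ incident to it), which exceeds the guaranteed $\frac25$. Closing this requires a trade-off argument (many duplicated edges at a missing identified vertex force few extension edges inside each $S_i$, hence larger $f(S_i,\tilde T)$) that is absent from your sketch; as stated the case analysis does not close.

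The paper's own proof avoids all bookkeeping of shared edges by an asymmetric decomposition: for $G_1\cup G_2\subseteq S\subsetneq G$ it sets $S_1=G|_{V(S)\cup\{z_1,z_2,z_3\}}$ and writes $f(G,S)=f(G,S_1)+f(S_1,S)$; neutrality of the $G_1$-extension gives $f(S_1,S)\le 0$ (strictly negative in the boundary case $S_1=G$), and neutrality of the $G_2$-extension together with $v(G,S_1)<3$ gives $f(G,S_1)<0$ whenever $S_1\neq G$. Replacing your symmetric inclusion--exclusion identity by this decomposition (equivalently, $f(\tilde K,S)=f(\tilde K,\,S\cup\tilde K_1)+f(S\cup\tilde K_1,\,S)$) would repair the argument with no case analysis.
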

\begin{proof}
Item 1 is an obvious corollary of Property \ref{two-bad-subgraphs-cant-intersect-kjtj}. 

To prove Item 2, we need to vertify that $(G, G_1\cup G_2)$ is $3/5$-rigid. Consider an arbitraty subhgraph $S$ of $G$ such that  $S \supseteq G_1\cup G_2$. Let $S_1$ be a subgraph of $G$ induced on $V(S)$ and $z_1, z_2, z_3$ (recall that some of $t_i$ are merged with $z_i$). Since the extension of $G_1$ is $\frac 35$-neutral, we have $$f(G, S) = f(G, S_1) + f(S_1, S) \leqslant f(G, S_1).$$
Moreover, since the extension of $G_2$ is $\frac 35$-neutral and $v(G, S_1) < 3$, we have $f(G, S_1) < 0$. Thus, $(G, G_1\cup G_2)$ is $3/5$-rigid. The other conditions in the statement of Property \ref{two-bad-subgraphs-cant-intersect-kjtj} are trivial. Thus, either $G \in \G$ or $\rho^{\max}(G) \geqslant 5/3$.
\end{proof}
}

\subsection{Occurrence of graphs of $\G$ in an arbitrary graph}

Here we exploit the set $\G$ constructed in the previous section. Recall that $H$ is the singleton graph with $V(H)=\{z\}$. 

Consider a graph $\Gamma$ and a vertex $u\in V(\Gamma)$.

\begin{defin}
We call a graph $R\subset\Gamma$ containing $u$ {\it $\G$-maximal} if, for each $G' \in \G$, there is no $(G', H)$-extension $R' \subseteq \Gamma$ of $\Gamma|_u$ such that $R$ is a proper subgraph of $R'$. 

We call an induced subgraph $U \subset \Gamma$ \textit{$u$-bad} if it is a $\G$-maximal strict $(G, H)$-extension of $\Gamma|_u$ for some $G \in \G$; $G$ is called {\it the origin of $U$}.\\
\end{defin}

{\color{black} Let us recall that every $G \in \G$ has at most \textcolor{black}{$176$} vertices.} \\

\begin{claim} \label{all_bad_dont_intersect}
Assume $\Gamma$ does not contain any subgraph $G$ with $\rho(G)\geq\frac53$ \textcolor{black}{and $v(G)\leq 176 \cdot 2 - 2 = 350$.} Then, for each vertex $u \in V(\Gamma)$, any two $u$-bad subgraphs of $\Gamma$ only intersect in $u$.\\
\end{claim}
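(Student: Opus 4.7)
I will argue by contradiction. Suppose there are two distinct $u$-bad subgraphs $U_1, U_2 \subseteq \Gamma$ with origins $G_s \in \G_{i_s}$ ($s=1,2$), satisfying $V(U_1) \cap V(U_2) \supsetneq \{u\}$. Neither can properly contain the other: if $U_1 \subsetneq U_2$, then $U_2$ is itself a $(G_2, H)$-extension of $\Gamma|_u$ with $G_2 \in \G$ that properly extends $U_1$, violating $\G$-maximality of $U_1$. Fix a chain $\Gamma|_u =: U_{2,-1} \subsetneq U_{2,0} \subsetneq \cdots \subsetneq U_{2,i_2} = U_2$ realising $G_2 \in \G_{i_2}$, and let $j_0 := \min\{j \geq 0 : V(U_{2,j}) \cap V(U_1) \supsetneq \{u\}\}$.

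\emph{Case 1} ($j_0 \geq 1$): Here $V(U_{2, j_0-1}) \cap V(U_1) = \{u\}$, so $R := \Gamma|_{V(U_1) \cup V(U_{2, j_0-1})}$ receives, via the strict extensions $U_1$ and $U_{2, j_0-1}$, an embedding of the abstract graph $K := G_1 \cup G_{2, j_0-1}$ (disjoint union merged at $z$) sending $z \mapsto u$. Property~\ref{two-bad-subgraphs-cant-intersect-kjtj} with the trivial rigid pair $\tilde K = \tilde T$ then yields $K \in \G$ or $\rho^{\max}(K) \geq 5/3$. In the first subcase, $R$ is a $(K,H)$-extension of $\Gamma|_u$ with $K \in \G$ properly containing $U_1$ (as $v(U_{2,j_0-1}) \geq 2$), contradicting $\G$-maximality; in the second, $\rho^{\max}(R) \geq 5/3$ while $v(R) \leq 176 + 116 - 1 \leq 350$ (using $j_0 - 1 \leq i_2 - 1 \leq 3$), contradicting the hypothesis on $\Gamma$.

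\emph{Case 2} ($j_0 = 0$): Let $k := \min\{j \geq 0 : V(U_{2,j}) \not\subseteq V(U_1)\}$, which is well-defined since $V(U_2) \not\subseteq V(U_1)$, and set $R_k := \Gamma|_{V(U_1) \cup V(U_{2,k})}$, a proper extension of $U_1$. I verify that $(R_k, U_1)$ is $\tfrac{3}{5}$-rigid via the inequality $f(R_k, S) \leq f(U_{2,k}, S')$ with $S' := U_{2,k}|_{V(S) \cap V(U_{2,k})}$, together with the rigidity or neutrality of $(G_{2,k}, G_{2,k-1})$ (or of $(G_{2,0}, H)$ when $k = 0$); that $v(R_k, U_1) \leq v(G_{2,k}, G_{2,k-1})$; and that the path condition holds by Property~\ref{2_joined}. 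When the vertex count meets the $(i_1+1)$-bad cap, appending $(R_k, G_1)$ to the chain for $G_1$ yields $i_1 + 1$ bad steps above a $\G_0$-initial step: if $i_1 + 1 \leq 4$ this places the abstract $R_k$ in $\G_{i_1+1} \subseteq \G$, contradicting $\G$-maximality; if $i_1 + 1 = 5$ the Section~\ref{sc:new_constructions} calculation forces $\rho^{\max}(R_k) \geq 5/3$, contradicting the hypothesis.

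The main obstacle is the sub-case $V(U_{2,0}) \subseteq V(U_1)$ with $i_1 \in \{0,1\}$ and $k \geq 2$, where $v(R_k, U_1)$ may exceed the narrow $(i_1+1)$-bad cap of $15$ or $30$. There I switch to the alternative chain $G_{2,0} \subset G_{2,1} \subset \cdots \subset G_{2,k-1} \subset G_1 \subset R_k$: the step $(G_1, G_{2,k-1})$ is $k$-bad because $v(G_1, G_{2,k-1}) \leq v(G_1) - 2 \leq 24$ fits the $k$-bad bound for $k \geq 2$, rigidity of $(G_1, H)$ transfers to $(G_1, G_{2,k-1})$ (since $G_{2,k-1} \supsetneq H$), and the path condition holds by Property~\ref{2_joined} applied to $G_1$; the final step $(R_k, G_1)$ is $(k+1)$-bad as before. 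The resulting chain of $k+1$ bad steps closes the contradiction via the same dichotomy: $R_k \in \G_{k+1} \subseteq \G$ when $k+1 \leq 4$ (contradicting $\G$-maximality), else $\rho^{\max}(R_k) \geq 5/3$ (contradicting the hypothesis).
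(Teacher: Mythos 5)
Your Case~1 ($j_0\geq 1$) contains a fatal error, and it is exactly the hard part of the statement. There you work only with $U_1$ and $U_{2,j_0-1}$, which by your own choice of $j_0$ intersect in nothing but $u$, and you claim that the abstract graph $K=G_1\cup G_{2,j_0-1}$ glued at the single vertex $z$ must lie in $\G$ or have $\rho^{\max}(K)\geq \frac53$, invoking Property~\ref{two-bad-subgraphs-cant-intersect-kjtj} ``with the trivial rigid pair $\tilde K=\tilde T$''. That invocation is not legitimate: an empty extension is not a $\frac35$-rigid pair (the definition requires $\tilde T\subset\tilde K$ strictly), and the proof of Property~\ref{two-bad-subgraphs-cant-intersect-kjtj} genuinely uses the new vertices to create a connection between $V(G_1)\setminus\{z\}$ and $V(G_2)\setminus\{z\}$ (already the rigidity of $(K,G_2)$ fails for $S=G_2$ when $(G_1,H)$ is $\frac35$-neutral and there are no cross edges). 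Worse, the dichotomy you assert is simply false for a one-point union: $K|_{V(K)\setminus\{z\}}$ is disconnected, so $K\notin\G$ by Property~\ref{2_joined}, and $K$ need not be dense --- glue at $z$ two copies of the neutral graph on $\{z,a,b,c\}$ with edges $za,zb,ab,ac,bc$; the result has $\rho^{\max}=10/7<\frac53$ and is not in $\G$. Indeed, if your Case~1 argument were sound it would show that no vertex can have two bad subgraphs meeting only in $u$, which is precisely the configuration the claim is meant to permit; note that in Case~1 you never use the actual nontrivial intersection, which occurs at level $j_0$. These configurations (lower levels of one chain meeting the other bad subgraph only in $u$, a higher level meeting it more) are where the paper's proof does almost all of its work: it passes to the chain of the origin of smaller index, locates the first level poking outside the intersection, and when that level's base meets the other bad subgraph only in $u$ it needs several further sub-cases, including rigidity of $(U_1,W\cap U_1)$ and a density count that forces $i_1=i_2=2$. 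Your proposal offers no substitute for any of this.

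For completeness: your Case~2 ($j_0=0$) is essentially sound and close in spirit to the paper's computation --- extending $U_1$ by the first level $U_{2,k}$ of $U_2$'s chain that leaves $V(U_1)$, checking rigidity via $f(R_k,S)\leq f(U_{2,k},S')$, and your chain-switching device for meeting the $(i_1+1)$-bad size caps is a reasonable alternative to the paper's convention of always enlarging the bad subgraph whose origin has the larger index. But since Case~1 is invalid, the proof as a whole does not go through.
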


\begin{proof}
Assume the contrary. Let $U,W \subseteq \Gamma$ be $u$-bad subgraphs with $v(U \cap W) > 1$. Let $G_1\in\mathcal{G}_{i_1}$, $G_2\in\mathcal{G}_{i_2}$ be the origins of $U$ and $W$ respectively. Let $\min\{i_1,i_2\}=i_1$. Let $\varphi: G_1\cup G_2\to U\cup W$ be an endomorphism such that $\varphi(z)=u$, $\varphi|_{V(G_1)}$ is an isomorphism between $G_1$ and $U$, $\varphi|_{V(G_2)}$ is an isomorphism between $G_2$ and $W$. Let $T$ be the induced subgraph of $G_1$ that corresponds to $V(U\cap W)$ in $G_1$, that is, $V(T) = \left[\varphi|_{V(G_1)}\right]^{-1}(V(U\cap W))$. Note that the pair $(G_1,T)$ is $\frac{3}{5}$-rigid. Indeed, for each $S$ such that $T \subseteq S \subset G_1$ we have $f(G_1, S) < 0$ since $H \subsetneq T$.\\


If $i_1=0$, then $(U\cup W,W)$ is $\frac{3}{5}$-rigid (since $(G_1,T)$ is $\frac{3}{5}$-rigid) and $v(U\cup W,W)\leq 9$. If $i_1=1$, then $(U\cup W,W)$ is $\frac{3}{5}$-rigid and $v(U\cup W,W)\leq \textcolor{black}{24}$. Moreover, each vertex of $V(U \cup W) \setminus V(W)$ has a path to $V(W) \setminus \{u\}$ by Property \ref{2_joined}. In both cases, either $\rho^{\max}(U\cup W)\geq\frac{5}{3}$ (which contradicts the fact that $\Gamma$ does not contain subgraphs with $\rho(G)\geq\frac53$ and $v(G)\leq \textcolor{black}{350}$, \textcolor{black}{since $v(U \cup W) \leqslant 176 \cdot 2 - 2 =350$}) \textcolor{black}{or $U \cup W$ has a copy in $\mathcal{G}$}, and we get a contradiction with the assumption that $W$ is $u$-bad. Indeed, if $i_1=0$, then the pair $(U\cup W,W)$ is 1-bad, and, if $i_1=1$, then the pair $(U\cup W,W)$ is 2-bad, which is enough since $i_2\geq i_1=1$.\\


Finally, assume that $i_1\geq 2$.
Let $H\subset G_1^0\subset G_1^1\subset\ldots\subset G_1^{i_1}=G_1$ be such that
\begin{itemize}
\item $G_1^0\in\mathcal{G}_0$, i.e. $(G_1^0, H)$ is $\frac35$-neutral or $\frac35$-rigid and $v(G_1^0, H) \leqslant 10$; 
\item $G_1^i\in\mathcal{G}_i$, i.e. $(G_1^i,G_1^{i-1})$ are $i$-bad for all $i\in\{1,\ldots,i_1\}$.
\end{itemize}
For every $i\in\{0,1,\ldots,i_1\}$, let $U_i$ be the image of $G_1^i$ under $\varphi$.
Find the minimum $i\in\{0,1,\ldots,i_1\}$ such that $G_1^i\setminus T$ is non-empty. Notice that 
{\color{black}$$
v(U_i\cup W,W)\leq v(G_1^i)-v(G_1^{i-1})\leq 10I(i=0)+15I(i=1)+30I(i=2)+60I(i\geq 3),
$$}
where $G_1^{-1}=H$. Also, if $i\geq 1$, then the pair $(U_i\cup W,W)$ is $\frac{3}{5}$-rigid by the definition of $i$-bad pairs. Since $i_2\geq i_1\geq i$ and each vertex of $V(U_i \cup W)\setminus V(W)$ has a path to $V(W)\setminus\{u\}$ (by Property~\ref{2_joined}  and the fact that $G_1^i\in\mathcal{G}$), we get a contradiction with the assumption that $W$ is $u$-bad.

It remains to consider the case $i=0$. If $T$ shares at least 2 vertices with $G_1^0$, then the pair $(U_0\cup W,W)$ is $\frac{3}{5}$-rigid and each vertex of $V(U_0 \cup W)\setminus V(W)$ has a path to $V(W) \setminus \{u\}$ (by Property~\ref{2_joined} and the fact that $G_1^0\in\mathcal{G}$). It contradicts the assumption that $W$ is $u$-bad.

Assume that $T\cap G_1^0=\{z\}$. Also, assume that $G_1^1\setminus(G_1^0\cup T)$ is non-empty. Then $(U_1,W\cap U_1)$ is $\frac{3}{5}$-rigid. 

\begin{itemize}
    \item If $W\cap U_1$ has at least $2$ vertices, then it has a vertex other than $u$ adjacent to at least one vertex of $U_1\setminus W$ (otherwise, we get a contradiction with Property~\ref{2_joined} since $G^1_1\in\mathcal{G}$). Moreover, $v(U_1\cup W,W)\leq \textcolor{black}{24}$ and the pair $(U_1\cup W,W)$ is $\frac{3}{5}$-rigid. Moreover, each vertex of $V(U_1 \cup W) \setminus V(W)$ has a path to $V(W) \setminus \{u\}$. This contradicts the assumption that $G_2$ is $u$-bad. 
    
    \item If $V(W\cap U_1)=\{u\}$, then either $V(W\cup U)=V(U_1\cup W)$ or both pairs $(U_1\cup W,W)$ and $(W\cup U,U_1\cup W)$ are $\frac{3}{5}$-rigid. 
    In the former case, $U_1\setminus W$ has neighbours in $W$ other than $u$ (due to Property~\ref{2_joined} and the fact that $G_1\in\mathcal{G}$). Since $v(U_1\cup W,W)\leq \textcolor{black}{24}$, the pair $(U_1\cup W,W)$ is $\frac{3}{5}$-rigid, and each vertex of $V(U_1\cup W) \setminus V(W)$ has a path to $V(W) \setminus \{u\}$, this contradicts the assumption that $W$ is $u$-bad. 
    
    In the latter case, by Property~\ref{Prop:f_from_above},
\begin{align*}
e(W\cup U) & =e(W\cup U,W)+e(W) \\
& =e(W\cup U,U_1\cup W)+e(U_1\cup W,W)-\frac{5}{3}f(W)+\frac{5}{3}v(W)\\
& \geq\frac{5}{3}v(W\cup U,U_1\cup W)+\frac{1}{3}+\frac{5}{3}v(U_1\cup W,W)+\frac{1}{3}-\frac{5}{3}+\frac{i_2}{3}+\frac{5}{3}v(W)\\
& =\frac{5}{3}v(W\cup U)+\frac{i_2-3}{3}.
\end{align*}
Therefore, $i_1=i_2=2$ (otherwise, $\rho(U\cup W)\geq\frac{5}{3}$, which contradicts the fact that $\Gamma$ does not contain subgraphs with $\rho(G)\geq\frac53$ and $v(G)\leq \textcolor{black}{350}$). But then $(U\cup W, W)$ is $3/5$-rigid, $v(U\cup W,W)\leq \textcolor{black}{54}$, and each vertex of $V(U \cup W) \setminus V(W)$ has a path to $V(W) \setminus \{u\}$. We get a contradiction with the assumption that $W$ is $u$-bad.
\end{itemize}

Finally, we assume that $V(G_1^1\setminus G_1^0)$ is a subset of $T$. Then $G_1^0\setminus\{z\}$ has edges to $G_1^1\setminus G_1^0$ by Property~\ref{2_joined} and the fact that $G_1^1\in\mathcal{G}$. Then $(U_0\cup W,W)$ is $\frac{3}{5}$-rigid, $v(U_0\cup W,W)\leq 10$, and each vertex of $V(U_0 \cup W) \setminus V(W)$ has a path to $V(W) \setminus \{u\}$. We get a contradiction with the assumption that $W$ is $u$-bad.\\

\end{proof}

Enumerate all graphs in $\G$: $\G = \set{G_1, \ldots, G_{\kappa}}$. Let $V(G_i) = \set{z=z_1^i, z_2^i, \ldots, z_{v(G_i)}^i}$, $i\in[\kappa]$. 

\begin{defin}
For every $i\in[\kappa]$ and every strict $(G_i,H)$-extension $U$ of $u$ in $\Gamma$, we call an ordering $(u=u_1,\ldots,u_{v(G_i)})$ of the vertices of $U$ {\it canonical}, if $f:V(G_i)\to V(U)$ sending $z_j^i$ to $u_j$, $j\in[v(G_i)]$, is an isomorphism between $G_i$ and $U$.\\
\end{defin}

\begin{defin}
For $i\in[\kappa]$, let $\mathcal{U}_i(u)$ be the set of all $u$-bad $(G_i, H)$-extensions of the vertex $u$ in $\Gamma$.  Let the \textit{$0$-neighbourhood of the vertex} $u$ be the set $$U_0(u) = V(\Gamma)\setminus\left(\bigcup\limits_{i=1}^{\kappa} \bigcup\limits_{U \in \mathcal{U}_i(u)} V(U) \right).$$
\end{defin}

\section{Three main lemmas}
\label{sc:3_lemmata}

Here we formulate three lemmas that help Duplicator win the Ehrenfeucht game. Let us first introduce some auxiliary notations.

Let $\Gamma$ be an arbitrary graph. Let $v_1, \ldots, v_r$ be some vertices of $\Gamma$. For a tuple $(\alpha_1, \ldots, \alpha_r) \in \set{0,1}^r$, put 
$$\Nei\left(v_1^{\alpha_1}, \ldots, v_r^{\alpha_r}\right) = \set{u \in V(\Gamma) \mid \forall \, k \,(v_k \sim u \leftrightarrow \alpha_k = 1)} \setminus \set{v_1, \ldots, v_r}.$$ 
Also, for convenience (if possible), in place of $v_i^0$ we write $\lnot v_i$, in place of $v_i^1$ -- just $v_i$. For example, $$\Nei(\lnot v_1, v_2, \lnot v_3) = \set{u \in V(\Gamma) \mid u \not\sim v_1, u \sim v_2, u \not\sim v_3, u \not\in \set{v_1,v_2,v_3}}.$$
Put $$\delta\left(v_1^{\alpha_1}, \ldots, v_r^{\alpha_r}\right) = I\left(\left|\Nei\left(v_1^{\alpha_1}, \ldots, v_r^{\alpha_r}\right)\right| \geqslant 1\right).$$
Moreover, let $U$ be a subgraph of $\Gamma$ such that $v_i \not\in V(U)$, $i\in[r]$. Put
\begin{gather*}
\Nei^U\left(v_1^{\alpha_1}, \ldots, v_m^{\alpha_m}\right) = \Nei\left(v_1^{\alpha_1}, \ldots, v_m^{\alpha_m}\right) \setminus V(U);\\
\delta^U\left(v_1^{\alpha_1}, \ldots, v_r^{\alpha_r}\right) = I\left(\left|\Nei\left(v_1^{\alpha_1}, \ldots, v_r^{\alpha_r}\right) \setminus V(U)\right| \geqslant 1\right).
\end{gather*}



\subsection{Lemma 1}\label{sc:l1}
Lemma \ref{lemma1} (stated below in this section) enables Duplicator to make the first move in the game $\EHR(X\stackrel{d}=G(n,n^{-\alpha}),Y\stackrel{d}=G(m,m^{-\alpha}),4)$. Let $x_1$ be the first move by Spoiler in $V(X)$. Our goal is to find a vertex $y_1 \in V(Y)$ such that, for each $x_1$-bad subgraph $U_X \subset X$, there exists a $y_1$-bad subgraph in $Y$ isomorphic to $U_X$ that has similar small $3/5$-neutral extensions in $Y$ to those of $U_X$ in $X$, and vice versa.\\

More specifically, consider the two $3/5$-neutral pairs $(K_1, T_1)$, $(K_2, T_2)$ presented on Figure~\ref{fig:three_pairs}. 
Let $\nu_j = v(T_j)$, $j\in\{1,2\}$. Note that $2 \leqslant \nu_j \leqslant 3$ and $v(K_j, T_j) = 3$. We will be interested in generalised $(K_j, T_j)$-extensions. It is worth mentioning that 
strict generalised $(K_j,T_j)$-extensions are $3/5$-neutral for $j = 1,2$. Let $K$ be a generalised $(K_j,T_j)$-extension of certain vertices $v_1,\ldots,v_{\nu_j}$ (some of them may be equal). Let $v_{\nu_j+1},\ldots,v_{\nu_j+3}$ be the vertices of $K$ other than $v_1,\ldots,v_{\nu_j}$. For any other $v_1^*,\ldots,v_{\nu_j}^*$ (such that $v_{i_1}^*=v_{i_2}^*$ if and only if $v_{i_1}=v_{i_2}$) and its generalised $(K_j,T_j)$-extension $K^*$, we call the ordering of its vertices $(v_1^*,\ldots,v_{\nu_j+3}^*)$ {\it canonical}, if $f:V(K)\to V(\tilde K^*)$ sending $v_i$ to $v_i^*$, $i\in[\nu_j+3]$, is an isomorphism of $(V(K),E(K)\setminus E(K|_{v_1,\ldots,v_{\nu_j}}))$ and $(V(\tilde K^*),E(\tilde K^*)\setminus E(\tilde K^*|_{v_1^*,\ldots,v_{\nu_j}^*}))$.


\begin{figure}[h!]
    \centering
    \includegraphics[scale=0.4,]{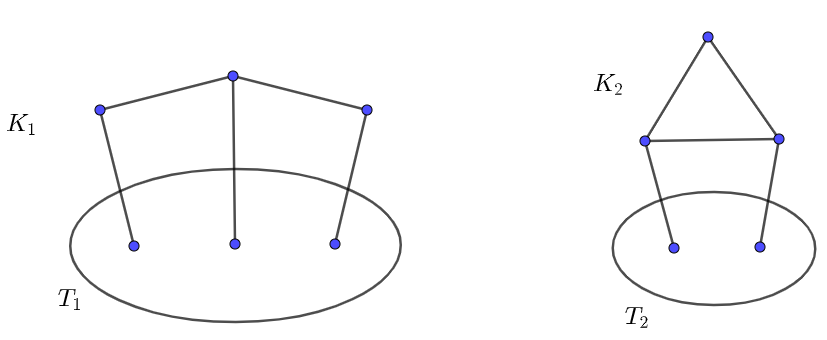}
    \caption{Pairs $(K_1, T_1)$ and $(K_2, T_2)$}
    \label{fig:three_pairs}
\end{figure}


For a vertex $u$ of a graph $\Gamma$, we are going to define its profile as the set of all possible ways (under some depth restrictions) to extend $u$-bad subgraphs of $\Gamma$ via $(K_j,T_j)$. The Duplicator's strategy in the first round provided by Lemma~\ref{lemma1} is to choose $y_1$ with the same profile in $Y$ as $x_1$ has in $X$.\\

Fix $j\in\{1,2\}$ and a subgraph $U\subset\Gamma$. Let $v_1, \ldots, v_{\nu_j}$ be some (not necessarily distinct) vertices in $U$.
Put $\zeta_j^{U}(v_1, \ldots, v_{\nu_j}) = 1$ if in the subgraph of $\Gamma$ induced on $V(\Gamma) \setminus \left(V(U) \setminus \set{v_1, \ldots, v_{\nu_j}}\right)$ there is a strict generalised $(K_j, T_j)$-extension $K$ of  $v_1, \ldots, v_{\nu_j}$. Otherwise, put $\zeta_j^{U}(v_1, \ldots, v_{\nu_j}) = 0$. 

Assume that $\zeta_j^{U}(v_1, \ldots, v_{\nu_j}) = 1$ and consider a respective strict generalised $(K_j, T_j)$-extension $K$ of $v_1,\ldots, v_{\nu_j}$.
Let $(v_1,\ldots,v_{\nu_j},v_{\nu_j + 1},v_{\nu_j + 2},v_{\nu_j + 3})$ be a canonical order of the vertices of $K$. Consider the function
$$
\theta^{U}(K): \set{(s_1,s_2,s_3) : 1 \leqslant s_1,s_2,s_3 \leqslant \nu_j + 3, \{s_1, s_2, s_3\} \not\subseteq \{1, \ldots, \nu_j\}} \rightarrow \set{0,1}
$$
defined as $\theta^{U}(K)(s_1,s_2,s_3)=\zeta_1^{K \cup U}(v_{s_1}, v_{s_2}, v_{s_3})$.
Roughly, $\theta^{U}(K)$ is a function that takes $3$ indices $s_1$, $s_2$, $s_3$ (at least one of which is greater than $\nu_j$) and returns $1$ if and only if there is a strict generalised $(K_1,T_1)$-extension of $v_{s_1}$, $v_{s_2}$, $v_{s_3}$  that does not use any vertices of $K \cup U$. In other words, $\theta^{U}(K)$ describes all ``second-level extensions'' of $K$ (since $K$ is an extension itself). 
We call $\theta^{U}(K)$ a \textit{specification} of $K$. 

Let $u_1$ be a vertex of $\Gamma$ and $U$ be a $u_1$-bad subgraph of $\Gamma$ (isomorphic to $G_i$). Consider a canonical order $(u_1, \ldots, u_{v(G_i)})$ of the vertices of $U$. Let $T_j(U)$ be the vector 
$$\left(\set{\theta^{U}(K) \mid \text{$K$ is a strict generalised $(K_j, T_j)$-extension of }u_{a_1}, \ldots, u_{a_{\nu_j}}}\right)_{1 \leqslant a_1, \ldots, a_{\nu_j} \leqslant v(G_i)}.$$ 
The indexing here are by all tuples of indices of length $\nu_j$ of vertices (not necessarily distinct) in $U$. This vector describes all specifications of strict generalised 
$(K_j, T_j)$-extensions of all subgraphs of $U$. 

Finally, for $i\in[\kappa]$, define the {\it $i$-profile} of a vertex $u$ of $\Gamma$ as $\J^i(u) = \set{T(U) \mid U \in \mathcal{U}_i(u)}$, where $
T(U) = (T_1(U), T_2(U))$.\\

\begin{lemma} \label{lemma1}
There exists $\varepsilon > 0$ such that for each  $\alpha \in \left(\frac35, \frac35 + \varepsilon\right)$ a.a.s $(X,Y)$ satisfies the following condition. For each vertex $x_1 \in V(X)$ there exists $y_1 \in V(Y)$ such that, for all $1 \leqslant i \leqslant\kappa$,
$$
\J^i(x_1) = \J^i(y_1).
$$
\end{lemma}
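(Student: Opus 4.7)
The plan is to show that, for $\varepsilon>0$ chosen small enough, a.a.s.\ the set of profile tuples $(\J^1(u),\ldots,\J^{\kappa}(u))$ realized by vertices $u\in V(X)$ coincides with that realized by vertices $u\in V(Y)$; then any $y_1\in V(Y)$ sharing the profile tuple of $x_1$ works, and such a $y_1$ exists.

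First I would verify that the set of possible profile tuples is finite. Since $\kappa$ is finite, every $G\in\G$ has at most $\textcolor{black}{176}$ vertices by Property~\ref{graphs are small}, and each vector $T(U)$ has bounded length with entries in a finite set of specifications, only finitely many tuples can arise. Moreover, Theorem~\ref{theorem_no_dense_subgraphs} excludes subgraphs of density $\geq 5/3$ of size at most $350$ a.a.s., so Claim~\ref{all_bad_dont_intersect} applies and every vertex has a bounded number of $u$-bad subgraphs.

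Second, for each profile tuple $J$ I would estimate $p_J(n):=\Pr[v_0\text{ has profile tuple }J]$ for a fixed $v_0$. This event is a Boolean combination of existence statements for strict $(G_i,H)$-extensions and strict generalised $(K_j,T_j)$-extensions, together with non-existence statements coming from $\G$-maximality and from absent specifications; every pair involved is $3/5$-neutral or $3/5$-rigid. Standard moment arguments (as in \cite{rucinski,spencershelah}) then give $p_J(n)=n^{-q_J(\alpha)+o(1)}$ for a non-decreasing piecewise-linear function $q_J$ of $\alpha$, with slopes bounded uniformly because $\G$ and $(K_j,T_j)$ have bounded sizes. The critical values $\alpha>3/5$ with $q_J(\alpha)=1$ (Poisson regime for the total count across all vertices) form a finite set bounded away from $3/5$. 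I would choose $\varepsilon$ smaller than the gap to the nearest critical value, so that for every $\alpha\in(\tfrac35,\tfrac35+\varepsilon)$ and every $J$, either $q_J(\alpha)<1$ (expected count diverges polynomially) or $q_J(\alpha)>1$ (it vanishes polynomially).

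For tuples $J$ in the divergent regime, a second-moment argument modeled on the proof of Theorem~\ref{G,H,K,T} yields concentration of the count of vertices of $X$ with profile tuple $J$ around its mean, so a.a.s.\ $X$ contains at least one such vertex; analogously for $Y$ with $m$ in place of $n$. For tuples in the vanishing regime, Markov's inequality excludes their existence a.a.s.\ in both graphs. A union bound over the finite set of profile tuples then gives that a.a.s.\ the sets of realized tuples in $X$ and $Y$ coincide, finishing the matching. The main obstacle will be the second-moment calculation: when two vertices $v_0,v_0'\in V(X)$ both realize profile tuple $J$, the local structures rooted at them may overlap, and one must argue that any non-trivial overlap forces a subgraph of density $\geq 5/3$ (of bounded size) --- this is the content of Property~\ref{two-bad-subgraphs-cant-intersect-kjtj} and its corollaries, and such subgraphs are excluded a.a.s.\ by Theorem~\ref{theorem_no_dense_subgraphs}. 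Here the combinatorial machinery developed in Section~\ref{sc:new_constructions} is essential.
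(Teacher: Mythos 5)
Your plan differs from the paper's. You try to classify, for every possible profile tuple $J$, whether it is a.a.s.\ realized or a.a.s.\ unrealized (a first/second moment dichotomy plus a choice of $\varepsilon$ avoiding critical exponents), whereas the paper never classifies profiles at all: it takes the profile actually realized at $x_1$, builds from it an explicit ``blueprint'' graph $Z$ containing a vertex $z_1$ with exactly that profile, checks $\rho^{\max}(Z)<5/3$ (via Claim~\ref{helpKTokralphasafe} and Claim~\ref{all_bad_dont_intersect}), and then invokes Theorem~\ref{G,H,K,T} to find in $Y$ a copy of $Z$ that is $(K,T)$-maximal for all small $3/5$-rigid and $3/5$-neutral pairs; the maximality is what guarantees that the image of $z_1$ acquires no unwanted extra bad subgraphs or extensions, so its profile equals $\J^i(x_1)$ for all $i$. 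All variance-type work is thus outsourced to the black box Theorem~\ref{G,H,K,T} applied to a safe pair, while your route would have to redo that work for a much more delicate event class.

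That is where your proposal has a genuine gap. The event ``$v_0$ has profile tuple exactly $J$'' mixes existence statements with non-existence statements ($\G$-maximality, specification entries equal to $0$); it is not monotone, so neither a routine first-moment bound nor a routine second moment applies off the shelf, and the claim $p_J(n)=n^{-q_J(\alpha)+o(1)}$ is asserted rather than proved. More concretely, your resolution of the declared ``main obstacle'' is misapplied: Property~\ref{two-bad-subgraphs-cant-intersect-kjtj} and Claim~\ref{all_bad_dont_intersect} control intersections of structures attached to the \emph{same} root vertex, while the second moment requires controlling pairs of structures rooted at two \emph{different} vertices $v_0\neq v_0'$, and such overlaps do not in general force a subgraph of density $\geqslant 5/3$ (for instance, two $3/5$-neutral bad subgraphs rooted at distinct vertices and sharing one non-root vertex have $f=1>0$ for their union), so excluding dense subgraphs does not make the overlap terms disappear. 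One would instead need the full generic-extension variance computation --- essentially re-proving Theorem~\ref{G,H,K,T} for exact-profile counts --- which is precisely what the paper's construction of $Z$ avoids. Your observation that the relevant critical exponents stay bounded away from $3/5$ because all structures have bounded size is correct and mirrors the paper's choice of $\varepsilon_1$, but without the exponent and variance arguments the 0--1 dichotomy on which your whole matching argument rests is not established.
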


\begin{proof}
The informal idea of the proof is as follows. For an arbitrary vertex $x_1 \in V(X)$ we need to construct a graph $Z$ with a vertex $z_1 \in Z$ such that for all $i$: $1 \leqslant i \leqslant \kappa$ the equality $\J^i(x_1) = \J^i(z_1)$ holds. We then find a \textcolor{black}{$t$-generic} copy $\hat Z$ of $Z$ in $Y$ for some large $t$ and some $\alpha$ close to $3/5$. Finally, we choose $y_1$ to be the image of $z_1$ under the isomorphism between $Z$ and $\hat Z$.\\

Let $C>0$ be a large enough constant. For each positive integer $n$ let $\mathcal{X}_n \subseteq \Omega_n$ be the set of all graphs on $[n]$ without subgraphs $G$ with $\rho^{\max}(G) \geqslant 5/3$ and $v(G) \leqslant C$. By Theorem \ref{theorem_no_dense_subgraphs}, we have $\p(X \in \mathcal{X}_n) \rightarrow 1$ for each $\alpha > 3/5$. Consider an arbitrary $A \in \mathcal{X}_n$ and a vertex \textcolor{black}{$x_1 \in A$}. By Claim \ref{all_bad_dont_intersect}, any two $x_1$-bad subgraphs of $A$ only intersect in $x_1$. We now introduce an algorithm to build a graph $Z = Z(A, x_1)$ with $z_1 \in V(Z)$ such that $\rho^{\max}(Z) < 5/3$ and the equalities $\J^i(x_1) = \J^i(z_1)$ hold for $1 \leqslant i \leqslant \kappa$.

\begin{enumerate}
    \item Let $z_1$ be a \textcolor{black}{\it new} vertex. 
    \item For every $1 \leqslant i \leqslant \kappa$, for each $T \in \J^i(x_1)$ and the respective $x_1$-bad subgraph $U\subset A$ (isomorphic to $G_i$) with a canonical order of its vertices $u_1,\ldots,u_{v(G_i)}$: \label{main_1}
    \begin{enumerate}
        \item Select arbitrary {\it new} vertices $z_2=z_2(T), \ldots, z_{v(G_i)}=z_{v(G_i)}(T)$ and draw edges between them so that the subgraph $\tilde U$ of $Z$ induced on $z_1, z_2, \ldots, z_{v(G_i)}$ is isomorphic to $G_i$ and the ordering $(z_1,z_2,\ldots, z_{v(G_i)})$ is canonical.\label{build_subgraph_1}
        \item For all $j = 1,2$,
        for all $1 \leqslant a_1, \ldots, a_{\nu_j} \leqslant v(G_i)$ such that $\zeta^U_{j}(u_{a_1}, \ldots, u_{a_{\nu_j}})=1$, and, for every element of the $(a_1,\ldots,a_{\nu_j})$-th coordinate of $T_j(u)$
        and a respective strict generalised $(K_j,T_j)$-extension $K$ of $u_{a_1},\ldots,u_{a_{\nu_j}}$:
        \begin{enumerate}
        	\item Construct a strict generalised $(K_j, T_j)$-extension $\tilde K$ of $z_{a_1}, \ldots, z_{a_{\nu_j}}$. Let $(w_1,\ldots,w_p)$ be a canonical order of vertices of $\tilde K$. \label{build_type_1}
        	\item For all triplets $(s_1,s_2,s_3)$ such that $1 \leqslant s_1,s_2,s_3 \leqslant p$, $\set{w_{s_1}, w_{s_2}, w_{s_3}} \not\subseteq V(\tilde U)$: if $\theta^{U}( K)(s_1,s_2,s_3) = 1$, construct a strict generalised $(K_1,T_1)$-extension of $w_{s_1},w_{s_2},w_{s_3}$. \label{build_type_2}
        \end{enumerate}
    \end{enumerate}
\end{enumerate}

Let $Z_0$ be the subgraph of $Z$ induced on $z_1$ and the vertices selected at steps \ref{build_subgraph_1} of the algorithm. Let us list several properties of $Z$. Notice, first, that the number of steps~\ref{build_subgraph_1} that the algorithm performs is bounded from above by an absolute constant and, therefore, the number of vertices in $Z$ is bounded by an absolute constant.\\

\begin{claim}
The inequality $\rho^{\max}(Z) < 5/3$ holds.\\
\label{cl:Z_sparse}
\end{claim}

\begin{proof}
Consider a subgraph $\tilde U$ of $Z$ constructed at step \ref{build_subgraph_1}, isomorphic to $U\subset A$. All such subgraphs of $A$ are members of $\mathcal{U}_i(x_1)$. These subgraphs can only intersect in $x_1$, by Claim \ref{all_bad_dont_intersect}. The union of all $U$ (we take one at every step \ref{build_subgraph_1}) denoted by $A_0$ is isomorphic to $Z_0$. Notice that the number of steps~\ref{build_subgraph_1} that the algorithm performs is bounded from above by an absolute constant. Therefore, we may choose $C$ so large that $v(Z_0)=v(A_0)\leq C$. Therefore, $\rho^{\max}(Z_0) = \rho^{\max}(A_0)< 5/3$. Moreover, $Z$ is obtained from $Z_0$ by applying consequent $3/5$-neutral extensions of $Z_0$. By Claim \ref{helpKTokralphasafe}, $\rho^{\max}(Z) < 5/3$.\\
\end{proof}


\begin{claim}
The set $\mathcal{U}_i(z_1)$ is precisely the set of all $(G_i, H)$-extensions of $z_1$ in $Z$ constructed at steps \ref{build_subgraph_1}.\\
\label{cl:same_bad_set}
\end{claim}

\begin{proof}
Let $U \subseteq Z$ be a subgraph constructed  at step \ref{build_subgraph_1} of the algorithm. 
By Claim~\ref{all_bad_dont_intersect} and Claim~\ref{cl:Z_sparse}, it is sufficient to show that $U$ is $\G$-maximal. 

\textcolor{black}{Assume the contrary. If $U$ is not $\mathcal{G}$-maximal then there is $W \supset U$ such that $W$ has a copy in $\mathcal{G}$. Let $U'$ be the subgraph of $Z$ that contains $U$ and all the first-level and second-level extensions of $U$ constructed at steps \ref{build_type_1} and \ref{build_type_2} of the algorithm. By Property \ref{2_joined} of the set $\mathcal{G}$, $W$ cannot contain vertices of other subgraphs constructed at step \ref{build_subgraph_1} of the algorithm, except for $U$. This implies that $W \subset U'$. Note that $U'$ is obtained from $U$ by adding $3/5$-neutral (and thus $\alpha$-safe for all $\alpha > 3/5$) extensions. This implies that $(U', U)$ is $\alpha$-safe for all $\alpha>3/5$. Therefore, $(W, U)$ is also $\alpha$-safe for all $\alpha>3/5$ and hence not $3/5$-rigid, with $v(U) \geqslant 2$. If $W \in \mathcal{G} \setminus \mathcal{G}_0$, we obtain a contradiction with Property \ref{graphs are dense} of $\mathcal{G}$. If, on the other hand, $W \in \mathcal{G}_0$, we obtain a contradiction with the fact that $(W, Z_0)$ is $3/5$-neutral.}
\end{proof}

\begin{claim} \label{x=z}
For each $i$ the equalities $\J^i(x_1) = \J^i(z_1)$ hold.\\
\end{claim}
\begin{proof}
The equalities directly follow from the definition of the algorithm. For each $T \in \J^i(x_1)$ the algorithm constructs a $z_1$-bad extension $\tilde U$ for which $T(\tilde U) = T$. Therefore, $\J^i(x_1) \subseteq \J^i(z_1)$. By Claim~\ref{cl:same_bad_set}, there are no other $z_1$-bad extensions in $Z$, which implies the reverse inclusion.
\end{proof}

Denote $\mathcal{Z}_n = \set{Z(A, x_1) \mid A \in \mathcal{X}_n, x_1 \in V(A)}$. Denote $\mathcal{Z} = \bigcup\limits_{n \in \N} \mathcal{Z}_n$ and $$\varepsilon =\frac{1}{\sup\limits_{Z \in \mathcal{Z}}\rho^{\max}(Z)} - \frac{3}{5}.$$ 
Since all $Z$ have bounded sizes, we get that $\mathcal{Z}$ is finite. Since all graphs in $\mathcal{Z}$ have a maximal density less than $5/3$, we have $\varepsilon > 0$.



Consider $\mathcal{Y}_m \subseteq \Omega_m$ --- the set of all graphs $B$ on $[m]$ such that for each $Z \in \mathcal{Z}$, in $B$ there exists a \textcolor{black}{$175$-generic} induced subgraph isomorphic to $Z$. By Theorem \ref{G,H,K,T}, for every $\alpha \in \left(3/5, 3/5 + \varepsilon\right)$, we have $\p(Y \in \mathcal{Y}_m) \rightarrow 1$. Consider arbitrary graphs $Z \in \mathcal{Z}$ and $B \in \mathcal{Y}_m$. Find in $B$ an induced subgraph $\hat Z$ isomorphic to $Z$ with the above maximality condition. Let $\psi:V(Z) \rightarrow V(\hat Z)$ be an isomorphism between the graphs. Set $y_1 = \psi(z_1)$. \textcolor{black}{First of all, there are no $y_1$-bad subgraphs in $B$ other than the copies of $z_1$-bad subgraphs in $Z$, since a $y_1$-bad subgraph $\tilde U$ would form an extension of $B|_{\set{y_1}}$ with $v(\tilde U, \tilde U|_{\set{x_1}}) \leqslant 176-1=175$, which contradicts the fact that $\hat Z$ is $175$-generic}. Clearly, by Property \ref{graphs are dense} of the set $\mathcal{G}$ and Claim~\ref{all_bad_dont_intersect}, $\J^i(z_1) = \J^i(y_1)$ for all $i$. 
From Claim \ref{x=z}, it follows that, for each pair $(A, B)$: $A \in \mathcal{X}_n$, $B \in \mathcal{Y}_n$, and for each $x_1 \in V(A)$ there exists a vertex $y_1 \in V(B)$ for which the equalities $\mathcal{J}^i(x_1) = \mathcal{J}^i(y_1)$ hold, which concludes the proof. \end{proof}

\subsection{Lemmas 2 and 3}
Lemmas \ref{lemma2} and \ref{lemma3} help Duplicator make a move in round $2$ of $EHR(X,Y,4)$ in case Spoiler chooses a vertex, say, $x_2 \in V(X)$ that is not contained in any $x_1$-bad subgraph. In this case, the strategy should be to pick a vertex $y_2 \in V(Y)$ such that the subgraphs induced on $x_1, x_2$ and $y_1, y_2$ have similar dense extensions in $X$, $Y$. 

Below, we first define the set of `dense' extensions of interest.\\

Define $(K^*, T^*)$ as a pair of graphs with $v(T^*) = 2$, $e(T^*) = 0$, $v(K^*) = 3$, the only vertex in $V(K^*, T^*)$ being adjacent to each vertex from $V(T^*)$ (a ``tick'' extension).

Let $U$ be an induced subgraph of $\Gamma$. Let
\begin{itemize}
\item $W_0 = V(U)$;
\item $W_{i+1} = W_{i} \cup \bigl\{v \in V(\Gamma) : \left|\set{w \in W_{i} : w \sim v }\right| \geqslant 2\bigr\}$, $i \in \N$;
\item $W = \bigcup\limits_{i=0}^{\infty} W_i$.
\end{itemize}
Let us call $\Gamma|_{W}$ the \textit{$(K^*, T^*)$-neighbourhood} of $U$ in $\Gamma$. Finally, the subgraph of $\Gamma$ induced on ${\bigcup\limits_{i=0}^{k} W_i}$ is called the \textit{$k$-order $(K^*, T^*)$-neighbourhood} of $U$ in $\Gamma$.


In Lemma \ref{lemma2}, we focus on the case when $x_2$ is distant from $x_1$ and all $x_1$-bad subgraphs. Later, in Lemma \ref{lemma3}, we will analyze all the remaining $x_2$.\\

\begin{lemma} \label{lemma2}
There exists $\varepsilon > 0$ such that for each $\alpha \in \left(\frac35, \frac35 + \varepsilon\right)$ a.a.s. $(X,Y)$ satisfies the following condition.
For each pair of vertices $x_1, x_2 \in V(X)$ and for a vertex $y_1 \in V(Y)$ such that
\begin{enumerate} 
\item $\J^i(x_1) = \J^i(y_1)$ for all $i \in \kappa$, \label{l2_begin_cond}
\item $x_2 \in U_0(x_1)$, \label{l2_U0}
\item $x_2$ does not have neighbours in $x_1$-bad subgraphs of $X$ (except for, possibly, $x_1$),
\item for any $x_1$-bad subgraph $U\subset X$, for any $j\in\{1,2\}$, for any tuple $\tilde T$ of $\nu_j$ vertices from $U$, there is no generalised $(K_j, T_j)$-extension of $\tilde T$ that contains $x_2$ ($(K_j, T_j)$ is defined in Section~\ref{sc:l1}), \label{l2_cond_no_kt_extension} \label{l2_end_cond}
\end{enumerate}
there exists a vertex $y_2 \in V(Y)$ such that
\begin{enumerate}
\item $y_2 \in U_0(y_1)$,
\item $x_1 \sim x_2 \Leftrightarrow y_1 \sim y_2$,
\item $y_2$ does not have neighbours in $y_1$-bad subgraphs of $Y$ (except for, possibly, $y_1$),
\item for any $y_1$-bad subgraph $U\subset Y$, for any $j\in\{1,2\}$, for any tuple $\tilde T$ of $\nu_j$ vertices from $U$, there is no generalised $(K_j, T_j)$-extension of $\tilde T$ that contains $y_2$,
\item the pairs $(W_X, X|_{\set{x_1,x_2}})$ and $(W_Y, Y|_{\set{y_1,y_2}})$ are isomorphic, where $W_X$, $W_Y$ are the $3$-order $(K^*, T^*)$-neighbourhoods of $X|_{\set{x_1,x_2}}$, $Y|_{\set{y_1,y_2}}$ in $X$, $Y$,
\item \label{lemma2-end-conclusion} there exists an isomorphism $\varphi:V(W_X)\to V(W_Y)$ between $W_X$ and $W_Y$ such that $\varphi(x_1)=y_1$, $\varphi(x_2)=y_1$ and, for every $j\in\{1,2\}$, for each tuple $(w_1,\ldots,w_{\nu_j})$ of vertices from $W_X$ the equality $\zeta_j^{W_X}(w_1,\ldots,w_{\nu_j}) = \zeta_j^{W_Y}(\varphi(w_1),\ldots,\varphi(w_{\nu_j}))$ holds. \\
\end{enumerate}

\end{lemma}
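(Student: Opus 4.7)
The plan is to mimic the approach of Lemma~\ref{lemma1}: construct, for each admissible triple $(X,x_1,x_2)$, a bounded-size sparse graph $Z$ with distinguished vertices $z_1,z_2$ that encodes the target local information around $(y_1,y_2)$; a.a.s.\ embed $Z$ into $Y$ via Theorem~\ref{G,H,K,T} in a way that is $(K,T)$-maximal for every $\frac35$-rigid pair of bounded size; and set $y_2$ as the image of $z_2$. To build $Z$, I start from the graph $Z(X,x_1)$ from the proof of Lemma~\ref{lemma1} (with its root identified with a new vertex $z_1$); add a new vertex $z_2$ adjacent to $z_1$ iff $x_1\sim x_2$; attach to $\{z_1,z_2\}$ a fresh isomorphic copy of the 3-order $(K^*,T^*)$-neighbourhood $W_X$ of $\{x_1,x_2\}$ in $X$ (with $x_1\mapsto z_1$, $x_2\mapsto z_2$); and finally, for every $j\in\{1,2\}$ and every tuple $(w_1,\ldots,w_{\nu_j})$ in this copy of $W_X$ with $\zeta_j^{W_X}(w_1,\ldots,w_{\nu_j})=1$, glue on a fresh strict generalised $(K_j,T_j)$-extension of $(w_1,\ldots,w_{\nu_j})$.

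The first technical check is $\rho^{\max}(Z)<5/3$. By Claim~\ref{cl:Z_sparse}, $\rho^{\max}(Z(X,x_1))<5/3$; the copy of $W_X$ is an induced subgraph of $X$ of bounded size (the $(K^*,T^*)$-closure grows by a bounded amount per iteration whenever $X$ has no small subgraphs of density $\geq 5/3$), hence $\rho^{\max}(W_X)<5/3$ too; conditions 3 and 4 of the hypothesis force the copy of $W_X$ to share with $Z(X,x_1)$ only the vertex $z_1$ (and possibly $z_2$), so the union of the two remains sparse; finally, each attached $(K_j,T_j)$-extension is $\frac35$-neutral, and iterated application of Claim~\ref{helpKTokralphasafe} shows these attachments preserve $\rho^{\max}<5/3$. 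Collecting all such $Z$ over all admissible $(X,x_1,x_2)$ gives a finite family $\mathcal{Z}$, and I set $\varepsilon := 1/\sup_{Z\in\mathcal{Z}}\rho^{\max}(Z)-3/5>0$. For any $\alpha\in(3/5,3/5+\varepsilon)$, Theorem~\ref{G,H,K,T} then supplies a.a.s., for each $Z\in\mathcal{Z}$, a strict embedding $\varphi:V(Z)\to V(Y)$ which is $(K,T)$-maximal for every $\frac35$-rigid pair $(K,T)$ with $v(K,T)$ below a suitable absolute bound; set $y_2:=\varphi(z_2)$.

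Conclusion 2 is immediate by construction. Conclusion 5 follows because $\varphi$ sends the copy of $W_X$ inside $Z$ onto a subgraph that contains the 3-order $(K^*,T^*)$-neighbourhood of $\{y_1,y_2\}$, while $(K^*,T^*)$-maximality of $\varphi$ rules out any further growth of that closure outside this image. Conclusions 1, 3 and 4 are handled uniformly through maximality: any $y_1$-bad subgraph of $Y$ that contains $y_2$, is adjacent to $y_2$, or is $(K_j,T_j)$-linked to a tuple through $y_2$ would, combined with a suitable piece of $\varphi(Z)$, form a $\frac35$-rigid pair of bounded size forbidden by the embedding, while pulling such a configuration back under $\varphi^{-1}$ would contradict hypotheses 2--4 on $x_2$ together with Claim~\ref{all_bad_dont_intersect}.

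The main obstacle is conclusion 6. The direction $\zeta_j^{W_X}=1\Rightarrow\zeta_j^{W_Y}=1$ is immediate, since the required extension has been built into $Z$ and carried to $Y$ by $\varphi$. For the reverse, suppose that for some tuple $(w_1,\ldots,w_{\nu_j})$ in $W_X$ with $\zeta_j^{W_X}=0$ there is nevertheless in $Y$ a strict generalised $(K_j,T_j)$-extension $K$ of $(\varphi(w_1),\ldots,\varphi(w_{\nu_j}))$ whose new vertices lie outside $\varphi(W_X)$. A careful case analysis, exploiting the specific structure of $(K_j,T_j)$ (Figure~\ref{fig:three_pairs}) together with the definition of the 3-order $(K^*,T^*)$-closure, shows that either some new vertex of $K$ has two or more neighbours in an earlier closure level $W_i$ and thus must itself belong to $W_{i+1}\subseteq W_Y$ (contradicting externality), or else $K$ assembles with a suitable portion of $W_Y$ into a $\frac35$-rigid pair of bounded size that $(K,T)$-maximality of $\varphi$ already forces into $\varphi(Z)$, contradicting the assumed disjointness. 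Carrying out this case analysis for both $(K_1,T_1)$ and $(K_2,T_2)$ is where the bulk of the work lies.
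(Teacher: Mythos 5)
There is a genuine gap, and it is structural. The lemma must produce $y_2$ for the \emph{given} vertex $y_1$ (the one already fixed with $\J^i(x_1)=\J^i(y_1)$), so whatever graph $Z$ you build has to be planted in $Y$ \emph{rooted at $y_1$}. Your embedding step never does this: you invoke Theorem~\ref{G,H,K,T} to find a maximal copy of $Z$ somewhere in $Y$ and set $y_2=\varphi(z_2)$, but you never argue that $\varphi(z_1)=y_1$, and without that, conclusions 1, 2, 3, 5 and 6 (all of which are relative to the given $y_1$) are not established. Worse, your $Z$ cannot be rooted at $y_1$ via Theorem~\ref{G,H,K,T} at all: that theorem requires the pair $(Z,Z|_{\{z_1\}})$ to be $\alpha$-safe, and your $Z$ contains a copy of the Lemma~\ref{lemma1} graph $Z(X,x_1)$, which by construction consists of $\frac35$-rigid/$\frac35$-neutral extensions of the root — so $(Z,Z|_{\{z_1\}})$ is emphatically not $\frac35$-safe. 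The paper avoids this precisely by \emph{not} re-encoding the bad subgraphs (the profile equality from Lemma~\ref{lemma1} already guarantees $y_1$ has matching bad subgraphs) and taking $Z$ to be only $W_X$ together with its $(K_j,T_j)$-extensions; the heart of the argument is then Claim~\ref{cl:w_A_safe}, that $(W_X,X|_{\{x_1\}})$ is $\frac35$-safe, which is proved by a case analysis on the closure levels $W_1,W_2,W_3$ using exactly hypotheses 2--4. You never prove this safety statement; you only claim $\rho^{\max}(W_X)<\frac53$, which is strictly weaker and is not enough either to root the extension at $y_1$ or even to justify your density bookkeeping for the glued graph (two graphs of density just below $\frac53$ sharing a single vertex need not have maximal density below $\frac53$; one needs safety of the attached part over the gluing vertex, i.e.\ precisely Claim~\ref{cl:w_A_safe}, before Claim~\ref{helpKTokralphasafe} can be applied).

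Separately, your treatment of conclusion 6 in the direction $\zeta_j^{W_Y}=1\Rightarrow\zeta_j^{W_X}=1$ is deferred to an unexecuted ``careful case analysis''; in the paper this direction comes for free from the $(K,T)$-maximality of the planted copy with respect to all $\frac35$-rigid and $\frac35$-neutral pairs of bounded size (an unexpected $(K_j,T_j)$-extension of a tuple in $W_Y$ would be a neutral extension of the planted graph, forbidden by maximality), so no structural analysis of $(K_1,T_1)$, $(K_2,T_2)$ is needed. To repair your argument you would have to (i) drop the copy of $Z(X,x_1)$ from $Z$, (ii) prove the $\frac35$-safety of $(W_X,X|_{\{x_1\}})$ from hypotheses 2--4, and (iii) apply Theorem~\ref{G,H,K,T} to obtain a strict $(Z,Z|_{\{x_1\}})$-extension of $B|_{\{y_1\}}$ that is maximal for rigid \emph{and neutral} pairs — at which point you have reproduced the paper's proof.
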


\begin{proof}

For each positive integer $n$ let $\mathcal{X}_n \subseteq \Omega_n$ be the set of graphs on $n$ vertices without subgraphs $G$ with $\rho^{\max}(G) \geqslant 5/3$ and $v(G) \leqslant 12$. By Theorem \ref{G,H,K,T}, we have $\p(X \in \mathcal{X}_n) \rightarrow 1$ for each $\alpha > 3/5$. Consider an arbitrary $A\in\mathcal{X}_n$ and vertices $x_1$, $x_2$ such that the if-conditions \ref{l2_U0}-\ref{l2_cond_no_kt_extension} hold. Let $W_A$ be the $3$-order $(K^*, T^*)$-neighbourhood of $A|_{\set{x_1,x_2}}$. 
Let us list some properties of $W_A$.\\

\begin{claim}
$v(W_A) \leqslant 11$.\\
\end{claim}

\begin{proof}
Assume the contrary. Then there is a graph $\tilde W_A$ with $v(\tilde W_A) = 12$ and $A|_{\set{x_1, x_2}} \subset \tilde W_A \subseteq W_A$ such that
\begin{align*}
f(\tilde W_A) & \textcolor{black}{\leqslant} 2 + f(\tilde W_A, A|_{\set{x_1, x_2}})\\
 &< 2 + v(\tilde W_A, A|_{\set{x_1, x_2}})-\frac{6}{5}v(\tilde W_A, A|_{\set{x_1, x_2}})=2 - \frac{v(\tilde W_A) - 2}{5}=0.
\end{align*}

We obtain that $\rho(\tilde W_A) > 5/3$, which contradicts the fact that $A$ does not contain subgraphs with density at least $5/3$ on no more than 12 vertices.\\ \end{proof}

\begin{claim}
The pair $(W_A,A|_{\set{x_1}})$ is $3/5$-safe.\\
\label{cl:w_A_safe}
\end{claim}

\begin{proof}
Assume the contrary. If $(W_A, A|_{\set{x_1}})$ is not $3/5$-safe then there is a subgraph $\tilde U \subseteq W_A$ such that $(\tilde U, A|_{\set{x_1}})$ is $3/5$-rigid or $3/5$-neutral. Since $v(\tilde U) \leqslant 11$, $\tilde U$ has a copy in $\G$ and, therefore, is a subgraph of an $x_1$-bad subgraph in \textcolor{black}{$W_A$}. Let $U$ be an $x_1$-bad subgraph in $W_A$ that contains $\tilde U$. $U$ is obviously a subgraph of an $x_1$-bad subgraph in $A$. Thus, $x_2$ does not have neighbours in $U$ (except for, possibly, $x_1$). 

Let $W_i$ be the set of vertices of the $i$-order $(K^*, T^*)$-neighbourhood of $A|_{\set{x_1,x_2}}$. Also put $W_0 = \set{x_1,x_2}$, $W_i' = W_i \setminus W_{i-1}$ and $U_i = V(U) \cap W'_i$. \textcolor{black}{Note that $x_2 \not\in V(U)$ due to the if-condition 2, hence $V(U) = \{x_1\}\sqcup U_1 \sqcup U_2 \sqcup U_3$.}

First of all, since $x_2$ does not have neighbours in $V(U)\setminus\{x_1\}$, we have $U_1 = \varnothing$. Furthermore, let $w \in U_2$. Then either $w \in \mathcal{N}(x_1,s)$ for some $s \in W_1$ or $w \in \mathcal{N}(s,t)$ for some $s,t \in W_1$. In both cases, there exists $s \in \mathcal{N}(w, x_1)\cap W_1$. Since $w \in V(U)$, $x_1 \in V(U)$ and $U$ is $x_1$-bad in $W_X$, we get $s \in V(U)$, which contradicts the fact that $U_1 = \varnothing$. Therefore, $U_2$ is also empty. Finally, since $U \in \G$, $x_1$ has a neighbour in $U$ and, consequently, $U_3$. Let $v_1 \in U_3$, $v_1 \sim x_1$. Since $v_1 \in U_3$, there is $v_2 \in W_2'$ s.t. $v_2 \sim v_1$. If $v_2 \sim x_1$ then $v_2 \in U$, which contradicts the fact that $U_2 = \varnothing$. If $v_2$ has at least two neighbours $s, t \in W_1'$ 
then the pair $(W_A|_{\set{x_1, v_1, v_2, s,t,x_2}}, W_A|_{\set{x_1, v_1}})$ is $1$-bad, which contradicts the fact that $U$ is $x_1$-bad in $W_A$. Thus, $v_2 \sim x_2$ and there is $s \in W_1'$ s.t. $s \sim v_2$. Then the pair $(W_X|_{\set{x_1,v_1,v_2,s,x_2}}, W_X|_{\set{x_1, v_1}})$ is either $1$-bad or isomorphic to $(K_2,T_2)$, which contradicts the if-condition \ref{l2_cond_no_kt_extension}, since $v_1$ is a vertex of an $x_1$-bad subgraph in $X$ and since $x_2$ does not belong to this subgraph (due to the if-condition 2). Thus, $U_3 = \varnothing$, and we obtain a contradiction. \end{proof}

Let us build a graph $Z = Z(A, x_1, x_2)$. Initially, we set $Z = W_A$. Then, for each $j$ and each tuple $(w_1,\ldots,w_{\nu_j})$ of vertices from $W_A$ such that $\zeta_j^{W_A}(w_1,\ldots,w_{\nu_j}) = 1$, we construct a strict generalised $(K_j, T_j)$-extension of $(w_1,\ldots,w_{\nu_j})$ and add those vertices and edges to $Z$. By Claim \ref{helpKTokralphasafe} and Claim~\ref{cl:w_A_safe}, we obtain that $(Z,Z|_{\{x_1\}})$ is $3/5$-safe.

Denote 
$$    \mathcal{Z}_n = \set{Z(A, x_1, x_2) \mid A \in \mathcal{X}_n, \, x_1, x_2 \in V(A), \, \text{the if-conditions \ref{l2_U0}-\ref{l2_end_cond} hold}};\quad
    \mathcal{Z} = \bigcup\limits_{n \in \N} \mathcal{Z}_n. 
    $$

Clearly, $v(Z(A, x_1, x_2))$ is bounded by an absolute constant. Denote
$$\varepsilon = \inf\limits_{Z \in \mathcal{Z}}\frac{v(Z, Z|_{\{x_1\}})}{e(Z, Z|_{\{x_1\}})} - \frac{3}{5}.$$
Obviously, $\varepsilon > 0$.

Let $\mathcal{Y}_m \subseteq \Omega_m$ be the set of all graphs $B$ on $[m]$ such that for each $Z \in \mathcal{Z}$ and $y_1 \in V(B)$ there exists a strict a \textcolor{black}{$175$-generic} $(Z, Z|_{\{x_1\}})$-extension of $B|_{\set{y_1}}$ in $B$. By Theorem \ref{G,H,K,T}, we have $\p(Y \in \mathcal{Y}_m) \rightarrow 1$. 

Let $y_1 \in V(B)$ be a vertex (if exists) such that $\J^i(x_1) = \J^i(y_1)$ for all $i\in[\kappa]$. By the definition of $\mathcal{Y}_m$, in $B$, there is a strict $175$-generic $(Z, Z|_{\set{x_1}})$-extension $\hat Z$ of $B|_{\set{y_1}}$. We now choose $y_2$ to be the image of $x_2$ under an isomorphism between $Z$ and $\hat Z$ that maps $x_1$ to $y_1$. 

It is now easy to verify all the properties of $y_2$. The only non-trivial property is that $y_2$ does not have neighbours in any $y_1$-bad subgraph. Assume the contrary: let there be a $y_1$-bad subgraph $U\subset B$ and \textcolor{black}{$w \in V(U) \setminus \set{y_1}$} such that $w \sim y_2$. Then $U$ cannot be completely inside $\hat Z$, since there are no $x_1$-bad subgraphs in $\hat Z$ by Claim~\ref{cl:w_A_safe}. 
\textcolor{black}{Let $\tilde U =  B|_{V(U) \cup V(\hat Z)}$}. The pair $(\tilde U, \hat Z)$ is $3/5$-rigid: \textcolor{black}{indeed, if $U$ has a copy in $\mathcal{G} \setminus \mathcal{G}_0$ then $(\tilde U, \hat Z)$ is $3/5$-rigid by Property~\ref{graphs are dense} of $\mathcal{G}$; if $U$ has a copy in $\mathcal{G}_0$ then $(\tilde U, \hat Z)$ is $3/5$-rigid since $(U, U|_{\set{y_1}})$ is $3/5$-neutral or $3/5$-rigid (by the definition of $\mathcal{G}_0$) and there is an extra edge between $y_2$ and $w$.
Note that $v(\tilde U, \hat Z) \textcolor{black}{\leqslant v(U) - 1} \leqslant 175$ and there is an edge between $y_2$ and $w$, which contradicts the fact that $\hat Z$ is a $175$-generic extension of $B|_{\set{y_1}}$}.\\ \end{proof}


\begin{lemma} \label{lemma3}
There exists $\varepsilon > 0$ such that for each $\alpha \in \left(\frac35, \frac35 + \varepsilon\right)$ a.a.s. $(X,Y)$ satisfies the following condition.
For each pair of vertices $x_1, x_2 \in V(X)$ and for a vertex $y_1 \in V(Y)$ such that 
\begin{enumerate} 
\item $\J^i(x_1) = \J^i(y_1)$ for all $i\in[\kappa]$, \label{l3_begin_cond}
\item $x_2 \in U^0(x_1)$, 
\item $x_2 \not \sim x_1$,
\item there is only one $x_1$-bad subgraph $U_X$ that contains a vertex adjacent to $x_2$. There are no more vertices in $U_X$ adjacent to $x_2$.
\item for any $x_1$-bad subgraph $U\subset X$, for any $j\in\{1,2\}$, for any tuple $\tilde T$ of $\nu_j$ vertices from $U$, there is no generalised $(K_j, T_j)$-extension of $\tilde T$ that contains $x_2$, \label{l3_end_cond}
\end{enumerate}
there exists a vertex $y_2 \in V(Y)$ such that
\begin{enumerate}
\item $y_2 \in U^0(y_1)$,
\item $y_2\not\sim y_1$,
\item there is only one $y_1$-bad subgraph $U_Y$ that contains a vertex adjacent to $y_2$. There are no more vertices in $U_Y$ adjacent to $y_2$. Moreover, $T(U_X) = T(U_Y)$, and the vertex adjacent to $y_2$ is an image of the vertex in $U_X$ adjacent to $x_2$ under $\psi$, where $\psi$ is an isomorphism between $U_X$ and $U_Y$ such that $\psi(x_1) = y_1$ and $\psi$ preserves the canonical order, \label{then-cond-2-lemma-3}
\item for any $y_1$-bad subgraph $U\subset Y$, for any $j\in\{1,2\}$, for any tuple $\tilde T$ of $\nu_j$ vertices from $U$, there is no generalised $(K_j, T_j)$-extension of $\tilde T$ that contains $y_2$,
\item the pairs $(W_X, X|_{V(U_X) \cup \set{x_2}})$ and $(W_Y, Y|_{V(U_Y) \cup \set{y_2}})$ are isomorphic, where $W_X$, $W_Y$ are the $1$-order $(K^*, T^*)$-neighbourhoods of $X|_{V(U_X) \cup \set{x_2}}$, $Y|_{V(U_Y) \cup \set{y_2}}$ in $X$, $Y$,
\item there exists an isomorphism $\varphi:V(W_X)\to V(W_Y)$ between $W_X$ and $W_Y$ such that $\varphi|_{U_X} = \psi$ ($\psi$ is defined in Property \ref{then-cond-2-lemma-3}) and, for every $j\in\{1,2\}$, for each tuple $(w_1,\ldots,w_{\nu_j})$ of vertices from $W_X$, the equality $\zeta_j^{W_X}(w_1,\ldots,w_{\nu_j}) = \zeta_j^{W_Y}(\varphi(w_1),\ldots,\varphi(w_{\nu_j}))$ holds. 
\\
\end{enumerate}

\end{lemma}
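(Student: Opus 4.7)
The plan follows the structure of the proof of Lemma~\ref{lemma2} closely, with the essential difference that the base of the extension is now $U_X$ (together with $x_2$ and its unique edge into $U_X$) rather than just the singleton $\{x_1\}$. First I would restrict, via Theorem~\ref{theorem_no_dense_subgraphs}, to the a.a.s.\ event that $X$ contains no subgraph of density at least $5/3$ on at most a sufficiently large constant number of vertices. For each such $X$ and each triple $(x_1, x_2, U_X)$ satisfying the if-conditions, I would build an auxiliary graph $Z = Z(A, x_1, x_2, U_X)$ by (i) starting with $U_X \cup \{x_2\}$, where $x_2$ is joined to $U_X$ by its unique edge; (ii) adjoining the $1$-order $(K^*, T^*)$-neighbourhood $W_A$ of $U_X \cup \{x_2\}$ in $A$; and (iii) attaching, for each $j \in \{1,2\}$ and each tuple $(w_1, \ldots, w_{\nu_j})$ of vertices of $W_A$ with $\zeta_j^{W_A}(w_1, \ldots, w_{\nu_j}) = 1$, a strict generalised $(K_j, T_j)$-extension.

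The key technical step is to prove that the pair $(Z, U_X)$ is $3/5$-safe. This reduces, via Claim~\ref{helpKTokralphasafe}, to proving that $(W_A, U_X)$ itself is $3/5$-safe, since each attached $(K_j, T_j)$-extension is $3/5$-neutral and meets $W_A$ through at least one edge. Following the shell-by-shell approach of Claim~\ref{cl:w_A_safe}, any subgraph $\tilde U$ with $U_X \subseteq \tilde U \subseteq W_A$ yielding a $3/5$-rigid or $3/5$-neutral pair $(\tilde U, U_X)$ must produce either (a) a proper superset of $U_X$ lying in $\mathcal{G}$, contradicting the $\mathcal{G}$-maximality of $U_X$ via Property~\ref{graphs are dense}; (b) a strict generalised $(K_j, T_j)$-extension of a tuple in $U_X$ that uses $x_2$, contradicting if-condition~\ref{l3_end_cond}; or (c) a second $x_1$-bad subgraph distinct from $U_X$ that touches the neighbourhood of $x_2$, contradicting the uniqueness assumption in the hypotheses.

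The set $\mathcal{Z}$ of all graphs $Z(A, x_1, x_2, U_X)$ is finite, so I would set $\varepsilon := \inf_{Z \in \mathcal{Z}} v(Z, U_X)/e(Z, U_X) - 3/5 > 0$ and invoke Theorem~\ref{G,H,K,T}: a.a.s.\ in $Y$, for every $Z \in \mathcal{Z}$ and every induced subgraph $U' \subseteq Y$ isomorphic to $U_X$, there is a strict $(Z, U_X)$-extension of $U'$ that is $(K, T)$-maximal for every $3/5$-rigid or $3/5$-neutral pair $(K,T)$ with $v(K, T) \leq 175$. Using the profile equality $\mathcal{J}^i(x_1) = \mathcal{J}^i(y_1)$, I would locate a matching $y_1$-bad subgraph $U_Y$ in $Y$ with $T(U_X) = T(U_Y)$ and an isomorphism $\psi$ preserving the canonical order, extend $U_Y$ inside $Y$ to a maximal copy $\hat Z$ of $Z$, and declare $y_2$ to be the image of $x_2$. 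Conclusions 1--6 then follow in essentially the same way as in Lemma~\ref{lemma2}; the only subtle verification is that $y_2$ has no neighbour in any $y_1$-bad subgraph other than $\psi(u)$, where $u$ is the unique neighbour of $x_2$ in $U_X$. This is excluded by the $(K,T)$-maximality of $\hat Z$ together with Property~\ref{graphs are dense}, exactly as in the last step of the proof of Lemma~\ref{lemma2}.

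The main obstacle I expect is the shell analysis in the safety proof of $(W_A, U_X)$: unlike in Claim~\ref{cl:w_A_safe}, here one must simultaneously control three distinct obstructions, and in particular track how $x_2$ — now glued to $U_X$ and itself sitting inside the $(K^*, T^*)$-shells — interacts with potential new $x_1$-bad or $(K_j, T_j)$-structures emerging from $W_A$. This interplay is where the bulk of the case-work will be concentrated; the rest of the argument is a faithful adaptation of the scheme already used for Lemma~\ref{lemma2}.
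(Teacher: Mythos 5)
Your proposal follows essentially the same route as the paper's proof: form the $1$-order $(K^*,T^*)$-neighbourhood $W_A$ of $V(U_X)\cup\{x_2\}$, attach all available strict generalised $(K_j,T_j)$-extensions to obtain an auxiliary graph $Z$ with $(Z,U_X)$ being $3/5$-safe via Claim~\ref{helpKTokralphasafe}, and then combine Theorem~\ref{G,H,K,T} with the profile equality $\J^i(x_1)=\J^i(y_1)$ to find a $(K,T)$-maximal strict $(Z,U_X)$-extension of a matching $y_1$-bad subgraph $U_Y$ in $Y$ and take $y_2$ to be the image of $x_2$. The only real difference is that the paper avoids the shell case-work you anticipate as the main obstacle by noting directly that $|V(W_A)\setminus(V(U_X)\cup\{x_2\})|\leqslant 1$ (two such vertices would put $x_2$ inside a generalised $(K_1,T_1)$-extension of $U_X$, contradicting if-condition~\ref{l3_end_cond}, while $\mathcal{G}$-maximality of $U_X$ excludes outside vertices with two neighbours in $U_X$), after which the $3/5$-safety of $(W_A,U_X)$ is immediate.
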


\begin{remark}
From the conditions of Lemma \ref{lemma3} it follows that $|V(W_X) \setminus (V(U_X) \cup \set{x_2})| \leqslant 1$, which is shown below in the proof of Lemma~\ref{lemma3}.
\end{remark}

\begin{proof}
The idea of the proof is similar to that of Lemma \ref{lemma2}. We build an auxiliary graph $Z\supset U_X$ such that $(Z, U_X)$ is $3/5$-safe and the existence of a generic $(Z,U_X)$-extension of $U_Y$ in $Y$ entails the existence of such a vertex $y_2$.\\

Consider an arbitrary $A \in \Omega_n$ and vertices $x_1$, $x_2$ such that the if-conditions \ref{l2_U0}-\ref{l3_end_cond} hold. Let $W_A$ be the $1$-order $(K^*, T^*)$-neighbourhood of $A|_{\set{x_1,x_2}}$. 

Note that $|V(W_A) \setminus (V(U_A) \cup \set{x_2})| \leqslant 1$: there is no more than one vertex outside $U_A$ adjacent to $x_2$ and one of the vertices in $U_A$. Otherwise $x_2$ is contained in an extension of $U_A$ generically isomorphic to $(K_1, T_1)$, which contradicts the if-condition \ref{l3_end_cond}. Moreover, there is no vertex outside $U_A$ adjacent to at least two vertices in $U_A$, since $U_A$ is $x_1$-bad.

Let us build a graph $Z = Z(A, x_1, x_2)$. Initially $Z = W_A$. Then, for each $j$ and each tuple $w_1,\ldots,w_{\nu_j}$ of vertices from $V(W_A)$ such that $\zeta_j^{W_A}(w_1,\ldots,w_{\nu_j}) = 1$, we construct a strict generalised $(K_j, T_j)$-extension of $w_1,\ldots,w_{\nu_j}$ and add those vertices and edges to $Z$. Note that the pair $(Z, U_A)$ is $3/5$-safe: indeed, $(W_A, U_A)$ is $3/5$-safe; $(Z,U_A)$ is $3/5$-safe by Claim \ref{helpKTokralphasafe}. Denote 
$$    
\mathcal{Z}_n = \set{Z(A, x_1, x_2) \mid A \in \Omega_n, \, x_1, x_2 \in V(A), \, \text{the if-conditions 2-5 hold}};\quad
    \mathcal{Z} = \bigcup\limits_{n \in \N} \mathcal{Z}_n. 
$$
Let $\mathcal{Y}_m \subseteq \Omega_m$ be the set of all graphs $B$ on $[m]$ that satisfy the following condition. For each $Z = Z(A,x_1,x_2) \in \mathcal{Z}$ and each $y_1 \in V(B)$ and $U_B \subset B$ such that the if-conditions of Lemma \ref{lemma3} hold, there exists a strict \textcolor{black}{$175$-generic} $(Z, U_A)$-extension of $U_B$ in $B$. By Theorem \ref{G,H,K,T}, we have $\p(B \in \mathcal{Y}_m) \rightarrow 1$ (in the same way as in the proof of Lemma~\ref{lemma2}). Now, let $B \in \mathcal{Y}_m$. Let also $y_1 \in V(B)$ be a vertex such that $\J^i(x_1) = \J^i(y_1)$ for all $i\in[\kappa]$. Then there exists a $y_1$-bad subgraph $U_B$ in $Y$ such that $T(U_A) = T(U_B)$. By the definition of $\mathcal{Y}_m$, in $B$ there is a strict \textcolor{black}{$175$-generic} $(Z,U_A)$-extension of $U_B$. It is easy to verify that $y_2$ satisfies all the conditions of Lemma \ref{lemma3}. \end{proof}


\newcounter{mycasescounter}
\setcounter{mycasescounter}{0}
\def\i{\refstepcounter{mycasescounter}\textbf{Case \arabic{mycasescounter}.}\ }

\newcounter{pun}[mycasescounter]
\def\pu{\refstepcounter{pun}{\bf Subcase \arabic{mycasescounter}\alph{pun}.}\ }

\newcounter{subpun}[pun]
\def\subpu{\refstepcounter{subpun}{\bf Subcase \arabic{mycasescounter}\alph{pun}(\roman{subpun}).}\ }

\def\ob{\setcounter{mycasescounter}{0}}
\renewcommand{\thepun}{\arabic{mycasescounter}\alph{pun}}

\section{Proof of the main theorem}
\label{sc:th_proof}

In this section, we prove Theorem \ref{main_theorem}. We need to show that $3/5$ is not a limit point of the $4$-spectrum. Note that, as mentioned in Section \ref{intro}, for each $k$, the $k$-spetrum can only have limit points ``from above''.  

By Theorem \ref{ehrenfeuht}, it is sufficient to show that there exists $\varepsilon > 0$ such that for each $\alpha \in \left(3/5, 3/5 + \varepsilon\right)$ Duplicator a.a.s.\ has a winning strategy in the game $\EHR(X, Y, 4)$, where $X\stackrel{d}= G(n,n^{-\alpha})$ and $Y\stackrel{d}= G(m,m^{-\alpha})$ are independent random graphs.\\

Let $\varepsilon$ be so small that statements of Lemmas \ref{lemma1}, \ref{lemma2} and \ref{lemma3} hold for this value. Consider $\alpha \in \left(3/5, 3/5 + \varepsilon\right)$.

Consider the set $\mathcal{P}$ of all graph pairs $(A,B)$ satisfying the following conditions:

\begin{enumerate}
\item \label{begin_conds_XY} \label{lemma1holds} \textcolor{black}{For each $x_1 \in V(A)$ all $x_1$-bad subgraphs only intersect in $x_1$, the same holds for $B$.} For each $x_1 \in V(A)$ there exists $y_1 \in V(B)$ such that $\J^i(x_1) = \J^i(y_1)$ for all $i\in[\kappa]$, and vice versa. 

\item\label{lemma2holds} For each pair of vertices $x_1$, $x_2 \in V(A)$ and each vertex $y_1 \in V(B)$ such that Conditions \ref{l2_begin_cond}-\ref{l2_end_cond} of Lemma \ref{lemma2} hold, there exists $y_2 \in V(B)$ that satisfies the conclusion of Lemma \ref{lemma2}, and vice versa.

\item\label{lemma3holds} For each pair of vertices $x_1$, $x_2 \in V(A)$ and each vertex $y_1 \in V(B)$ such that Conditions \ref{l3_begin_cond}-\ref{l3_end_cond}  of Lemma \ref{lemma3} hold, there exists $y_2 \in V(B)$ that satisfies the conclusion of Lemma \ref{lemma3}, and vice versa.

\item\label{end_conds_XY} For each subgraph $S \subseteq A$ with $v(S) \leqslant 3$ and each $3/5$-safe pair $(G, H)$ such that $v(G, H) \leqslant 5$, in $A$ there exists a strict \textcolor{black}{$1$}-generic $(G,H)$-extension of $S$, and the same holds for $B$. 

\item\label{no-dense-subgraphs} \textcolor{black}{$X$ and $Y$ do not contain any subgraphs $G$ with $\rho(G) \geqslant 5/3$ and $v(G) \leqslant 176 \cdot 2 = 352$.}
\end{enumerate}

From \textcolor{black}{Claim \ref{all_bad_dont_intersect}}, Lemmas \ref{lemma1}, \ref{lemma2}, \ref{lemma3}, and Theorems \ref{G,H,K,T} \textcolor{black}{and \ref{theorem_no_dense_subgraphs}}, it follows that $\p((X,Y) \in\mathcal{P}) \rightarrow 1$.
Let us prove that Duplicator has a winning strategy on a pair $(A, B) \in \mathcal{P}$.\\

Without loss of generality we may assume that in the first round Spoiler chooses a vertex $x_1 \in V(A)$. Duplicator chooses a vertex $y_1 \in V(B)$ such that \begin{equation}
\J^i(x_1) = \J^i(y_1)\quad\text{ for all }i.
\label{equality_from_P1}
\end{equation}

From now on, during the proof we will only use the equality (\ref{equality_from_P1}) and Properties (\ref{lemma2holds}-\ref{no-dense-subgraphs}) of the pair $(A,B)$.\\ 

Without loss of generality we may assume that in the second round Spoiler chooses a vertex $x_2 \in V(A)$, since the conditions on $A$ and $B$ in the definition of $\mathcal{P}$ are symmetrical. We examine several cases.\\

\i \label{case1} Let $x_2$ be in a subgraph $U_A \in \mathcal{U}_i(x_1)$ for some $i \geqslant 1$. Due to the equality $\J^i(x_1) = \J^i(y_1)$, the graph $B$ contains an $y_1$-bad subgraph $U_B$ isomorphic to $G_i$ such that
$$T\left(U_A\right) = T\left(U_B\right).$$ 
Let $\varphi: V(U_A) \rightarrow V(U_B)$ be an isomorphism between $U_A$ and $U_B$ that preserves the canonical order. Duplicator answers by choosing $y_2 = \varphi(x_2) \in V(B)$.\\ 

Duplicator's further strategy can be described as follows. If Spoiler chooses a vertex in $U_A$ or $U_B$, Duplicator responds by selecting the corresponding vertex (in respect to $\varphi$) in the other graph. If Spoiler chooses a vertex outside the graphs $U_A$ and $U_B$, Duplicator selects a vertex that not only provides isomorphism between the resulting graphs induced on the sets of selected vertices, but also is in the same adjacency relations with corresponding (in respect to $\varphi$) vertices of the graphs $U_A$ and $U_B$. This property of Duplicator's strategy allows us to assume that all subsequent moves will be made only outside the graphs $U_A$ and $U_B$, since the vertices of these graphs will not affect the Duplicator's victory.

In the third round, Spoiler, without loss of generality, chooses a vertex $x_3 \in V(A) \setminus V(U_A)$. Note that $x_3$ cannot be adjacent to $2$ or more vertices in $V(U_A)$: otherwise, $\left(A|_{V(U_A) \cup \set{x_3}},  U_A\right)$ is $1$-bad \textcolor{black}{and $A|_{V(U_A) \cup \set{x_3}}$ should have a copy in $\mathcal{G}$}, which contradicts the $\G$-maximality of $U_A$.\\

We now examine several subcases of Case \ref{case1}.\\

\pu
Assume that $x_3$ is adjacent to exactly one vertex in $U_A$, say,  $z_{2A}$. Note that a vertex from $A \setminus U_A$ cannot be adjacent to \textcolor{black}{two or more vertices} of $U_A$: otherwise, in $A$ there is a $1$-bad extension of $U_A$ \textcolor{black}{that should therefore have a copy in $\mathcal{G}$}, which contradicts the $\G$-maximality of $U_A$. 

Denote $S = \set{v \in V(U_A) \mid \delta^{U_A}(v, x_3) = 1}$. Note that $|S| \leqslant 2$: otherwise, $x_3$ \textcolor{black}{and the vertices from $\cup_{v\in S}\mathcal{N}^{U_A}(x_3, v)$} form a $1$-bad extension of $U_A$, which contradicts the $\G$-maximality of $U_A$ in $A$.\\

\subpu
Assume that $|S| = 2$. Put $S = \set{z_{1A}, z_{3A}}$. Let also $t_{1A}$, $t_{3A}$ be the vertices adjacent with $z_{1A}$ and $z_{3A}$ respectively, and both adjacent with $x_3$. Note that $t_{1A}\neq t_{3A}$ since otherwise $x_3$, $t_{1A}=t_{3A}$ form a $1$-bad extension of $U_A$. \textcolor{black}{There also cannot be an edge between $t_{1A}$ and $t_{3A}$, for the same reason.} Therefore, $\left(A|_{\set{z_{1A}, z_{2A}, z_{3A}, t_{1A}, x_3, t_{3A}}}, A|_{\set{z_{1A}, z_{2A}, z_{3A}}}\right)$ is generically 
isomorphic to $(K_1, T_1)$. Therefore, 
$\zeta_1^{U_A}({z_{1A}, z_{2A}, z_{3A}}) = 1 = \zeta_1^{U_B}(z_{1B}, z_{2B}, z_{3B})$, 
where $z_{iB} = \varphi(z_{iA})$. Hence, in $B \setminus U_B$ there exist 3 vertices $t_{1B}, y_3, t_{3B}$ forming a strict generalised $(K_1, T_1)$-extension of the vertices $z_{1B}, z_{2B}, z_{3B}$. Thus, Duplicator chooses $y_3 \in V(B)$ in round 3. Note that for each $z_A \in U_A$ and for its image $z_B = \varphi(z_A)$ we have $x_3 \sim z_{A} \Longleftrightarrow y_3 \sim z_{B}$, since $y_3$ has no more than one neighbour in $U_B$ due to $\G$-maximality of $U_B$ in $B$. Moreover, $y_3$ has no more than 2 common neighbours with vertices of $U_B$, also due to $\G$-maximality of $U_B$. 

Consider the fourth round of the Ehrenfeucht game. Without loss of generality, in round 4 Spoiler chooses a vertex $x_4 \in V(A)$. If $x_4$ is adjacent to each of the vertices chosen before then $x_4 \in V(U_A)$ and this case has already been analyzed at the beginning of the reasoning. If $x_4$ is adjacent to exactly 2 of the vertices chosen before and $x_4 \not\in V(U_A)$ then $x_4$ is adjacent to $x_3$ and some $x_i$. Then $x_4$ is either $t_{1A}$ or $t_{3A}$. Duplicator chooses $t_{1B}$ or $t_{3B}$ as $y_4$, and wins. If $x_4$ is adjacent to no more than one vertex chosen before then Duplicator wins due to $3/5$-safeness of the pair $\left(A|_{\set{x_1,x_2,x_3,x_4}}, A|_{\set{x_1,x_2,x_3}}\right)$ and due to Property \ref{end_conds_XY} of $\mathcal{P}$ (\textcolor{black}{in this case, the fact that the extension provided by Property \ref{end_conds_XY} is generic, is not necessary}).\\

\subpu
Assume that $|S| = 1$. Denote $S = \set{z_{1A}}$. Let $t_A \in V(A) \setminus V(U_A)$ be a vertex adjacent to $x_3$ and $z_{1A}$. Obviously, $t_A$ has only one neighbour in $U_A$. 

Consider the pair $\left( A|_{V(U_A) \cup \set{x_3, t_A}}, U_A \right)$. This pair is $3/5$-safe, with $v(A|_{V(U_A) \cup \set{x_3, t_A}},U_A)=2$. By Property \ref{end_conds_XY} of $\mathcal{P}$, in $B$ there is a strict \textcolor{black}{$1$}-generic $\left( A|_{V(U_A) \cup \set{x_3, t_A}}, U_A \right)$-extension of $U_B$. Denote by $y_3$ and $t_B$ the images of $x_3$ and $t_A$ under the isomorphism of the pairs. Duplicator chooses $y_3$. \textcolor{black}{Note that, since $(K^*, T^*)$ is $\alpha$-rigid, we avoid $(K^*, T^*)$-extensions of $U_B$ that have an edge to $x_3$ (due fact that $U_B$ is a \textcolor{black}{$1$}-generic extension). Thus, $\delta^{U_A}(x_3, z_A) = \delta^{U_B}(y_3, \varphi(z_A))$ for each $z_A \in U_A$.}

Consider the last round of the Ehrenfeucht game. Without loss of generality, Spoiler chooses a vertex $x_4 \in V(A)$. If $x_4 \in V(U_A)$ then Duplicator chooses $\varphi(x_4) \in V(U_B)$ and wins. 

If $x_4 \not\in V(U_A)$ then $x_4$ has no more than one neighbour in $U_A$. If, in addition, $x_4$ is not adjacent to $x_3$ or does not have neighbours in $U_A$, Duplicator has the winning move due to $3/5$-safeness of the pair $\left(A|_{\set{x_1,x_2,x_3,x_4}}, A|_{\set{x_1,x_2,x_3}}\right)$. \textcolor{black}{If $x_4$ is adjacent to $x_3$ and has a single neighbour in $U_A$, then this neighbour is $z_{1A}$. Duplicator then chooses $y_4 = t_B$ and wins.} \\

\subpu Assume that $|S| = 0$. Due to Property \ref{end_conds_XY} of $\mathcal{P}$, Duplicator is able to choose a vertex $y_3 \in V(B)$ such that $y_3$ has the only neighbour $z_{2B} = \varphi(z_{2A})$ and $y_3$ does not have common neighbours with vertices from $U_B$. Due to Property \ref{end_conds_XY} of $\mathcal{P}$, the Duplicator's strategy in the last round is trivial. \\

\pu
\label{1b} Assume that $x_3$ is not adjacent to any of vertices of $U_A$. Our goal is to find a vertex $y_3 \in V(B) \setminus V(U_B)$ such that $\delta^{U_A}(x_3, x_i) = \delta^{U_B}(y_3, y_i)$, $i = 1, 2$. 

Define a set of vertices $T$. First of all, $x_3 \in T$. Secondly, for $i = 1, 2$ we do the following: if $\delta^{U_A}(x_3, x_i) = 1$, we add one of the vertices of $V(A) \setminus V(U_A)$ adjacent to $x_3$ and $x_i$ into $T$. Thus, $|T| \leqslant 3$. Note that $T$ does not contain vertices adjacent to at least two vertices of $U_A$, since $T \cap V(U_A) = \varnothing$. 

Consider a graph $\tilde U_A$ such that $V(\tilde U_A) = V(U_A) \cup T$ with the edges from $A$ except for the possible edge between the vertices from $T \setminus \set{x_3}$. The pair $(\tilde U_A, U_A)$ is $3/5$-safe, since $x_3$ does not have neighbours in $U_A$. Therefore, due to Property \ref{end_conds_XY} of $\mathcal{P}$, in $B$ there is a \textcolor{black}{strict $1$-generic} $(\tilde U_A, U_A)$-extension of $U_B$. Duplicator chooses a vertex $y_3 \in V(B)$ that is the image of $x_3$ under the isomorphism of the pairs. For $y_3$, obviously, the equality $\delta^{U_A}(x_3, x_i) = \delta^{U_B}(y_3, y_i)$ holds for $i = 1, 2$, \textcolor{black}{due to the fact that there are no $(K^*,T^*)$-extensions of $U_B$ that have an edge to $x_3$.}

Consider the last round of the Ehrenfeucht game. Without loss of generality, Spoiler chooses a vertex $x_4 \in V(A)$. If $x_4 \in V(U_A)$ then Duplicator chooses $y_4 = \varphi(x_4) \in V(U_B)$ and wins. Otherwise, $x_4$ has no more than one neighbour among $x_1$, $x_2$. If, in addition, $x_4$ is not adjacent to $x_3$ or to both $x_1,x_2$, Duplicator has a winning move due to $3/5$-safeness of the corresponding pair (\textcolor{black}{which can be easily verified}). 
Finally, if $x_4$ is adjacent to $x_3$ and has a single neighbour among $x_1$ and $x_2$, Duplicator wins due to the equalities $\delta^{U_A}(x_3, x_i) = \delta^{U_B}(y_3, y_i)$ with $i = 1, 2$.

Case \ref{case1} is finished.\\

\i \label{case2}
Let $x_2$ be a vertex in the $0$-neighbourhood of $x_1$ such that there is an $x_1$-bad subgraph $U_A$ with the following condition: for some $j \in \set{1, 2}$ and some tuple $\tilde T$ of $\nu_j$ vertices from $U_A$ there is an extension $Q_A \supset U_A$ such that $x_2 \in V(Q_A, U_A)$ and $Q_A|_{[V(Q_A)\setminus V(U_A)]\cup\tilde T}$ is a generalised $(K_j,T_j)$-extension of $\tilde T$. There can only be one such subgraph by Property \textcolor{black}{\ref{two-bad-subgraphs-cant-intersect-kjtj} of $\mathcal{G}$}.

Due to the choice of $y_1$, in $B$ there are induced subgraphs $U_B \subset Q_B$ such that $U_B$ is $y_1$-bad, $T(U_A) = T(U_B)$, $(Q_A, U_A)$ is isomorphic to $(Q_B, U_B)$ (under some isomorphism $\varphi$ that maps $x_1$ to $y_1$ and preserves the canonical order of $U_A$ and $U_B$), and $Q_A$ and $Q_B$ have identical generalised $(K_1,T_1)$-extensions in $A$ and $B$, respectively \textcolor{black}{(i.e. $\theta^{U_A}(Q_A) = \theta^{U_B}(Q_B)$, recall that the definition of $\theta^U(K)$ is given before the statement of Lemma \ref{lemma1})}. Duplicator chooses $y_2 = \varphi(x_2)$.

Note that there is no vertex in $A \setminus Q_A$ adjacent to at least two vertices from $Q_A$: \textcolor{black}{the existence of such a vertex contradicts $\mathcal{G}$-maximality of $U_A$.} The same holds for $B$. Without loss of generality, Spoiler chooses $x_3 \in V(A)$ in the third round.  

\pu\label{subc:3a_i} Assume that $x_3\notin V(Q_A)$. 

\subpu Assume first that $x_3$ is adjacent to a vertex $u$ from $Q_A$. If $\delta^{Q_A}(x_1, x_3) = \delta^{Q_A}(x_2,x_3) = 1$, let $t_{iA}$ be a vertex from $\mathcal{N}^{Q_A}(x_i, x_3)$ for $i = 1,2$. The pair $(A|_{V(Q_A) \cup \set{x_3, t_{1A}, t_{2A}}}, Q_A)$ is generically isomorphic to $(K_1, T_1)$. Therefore, $Q_B$ has an identical generalised  $(K_1,T_1)$-extension, \textcolor{black}{since $\theta^{U_A}(Q_A) = \theta^{U_B}(Q_B)$}. In other words, there exists $y_3 \in V(B) \setminus V(Q_B)$ adjacent to $\varphi(u)$ such that $\delta^{Q_B}(y_1, y_3) = \delta^{Q_A}(y_2,y_3) = 1$. Duplicator chooses $y_3$ and wins. 

\textcolor{black}{Consider the case $\delta^{Q_A}(x_1, x_3) = 0$ or $\delta^{Q_A}(x_2,x_3) = 0$. Define a set of vertices $T$ in a \textcolor{black}{similar} way to that in Subcase \ref{1b}: $T$ contains $x_3$, one vertex from $\mathcal{N}^{Q_A}(x_1, x_3)$ if it is non-empty, and one vertex from $\mathcal{N}^{Q_A}(x_2, x_3)$ if it is non-empty. Since both $\mathcal{N}^{Q_A}(x_1, x_3)$ and $\mathcal{N}^{Q_A}(x_2, x_3)$ cannot be non-empty, $|T| \leqslant 2$. Note that a vertex from $T$ cannot be adjacent to more than one vertex of $Q_A$ since $T \cap V(Q_A) = \varnothing$. Consider a graph $\tilde Q_A$ such that $V(\tilde Q_A) = V(Q_A) \cup T$ with the edges induced by $A$. The pair $(\tilde Q_A, Q_A)$ is $3/5$-safe. Therefore, due to Property \ref{end_conds_XY} of $\mathcal{P}$, in $B$ there is a strict \textcolor{black}{$1$-generic} $(\tilde Q_A, Q_A)$-extension of $Q_B$. Duplicator chooses $y_3 \in V(B)$ as the image of $x_3$ under the isomorphism of the pairs. For $y_3$, the equality $\delta^{Q_A}(x_3, x_i) = \delta^{Q_B}(y_3, y_i)$ holds for $i = 1, 2$, since there are no $(K^*, T^*)$-extensions of $Q_B$ that have an edge to $x_3$.}

\textcolor{black}{Consider the last round of the Ehrenfeucht game. The existence of the last move by Duplicator in this case can be verified similarly to Subcase \ref{1b}. }

\subpu If $x_3$ is not adjacent to any vertex from $Q_A$, then Duplicator chooses a vertex $y_3\notin V(Q_B)$ that is not adjacent to any vertex of $Q_B$ and ensures that $x_3$ has a common neighbour with $x_j$, $j\in\{1,2\}$, if and only if $y_3$ has a common neighbour with $y_j$, $j\in\{1,2\}$ (\textcolor{black}{It can be achieved similarly to Subcase \ref{1b}} since $x_3,x_1,x_2$ do not have a common neighbour and due to Property~\ref{end_conds_XY} of $\mathcal{P}$). If Spoiler in the fourth round chooses a vertex in $V_A$ or $V_B$, then Duplicator chooses the respective vertex in the other graph. If he chooses a vertex outside $V_A$ (say, he plays in $A$), then it is either adjacent to at most one vertex of $x_1,x_2,x_3$ (and then Duplicator wins due to the Property 4 of $\mathcal{P}$), or it is adjacent to $x_3$ and one vertex of $x_1,x_2$, which is also foreseen by Duplicator.\\


\pu\label{subc:3a_ii} Consider the case $x_3\in V(Q_A)$. Duplicator chooses $y_3=\varphi(x_3)$. Since any vertex outside $Q_A$ ($Q_B$) is adjacent to at most one vertex of $Q_A$ ($Q_B$), Duplicator wins in the fourth round by Property~\ref{end_conds_XY} of $\mathcal{P}$.  \\

\i \label{case3} Let $x_2$ be a vertex in the $0$-neighbourhood of $x_1$ \textcolor{black}{such that there is no $x_1$-bad subgraph $U_A$ and extension $Q_A$ described in Case \ref{case2} that contains $x_2$}, 
and let $x_2$ do not have neighbours in $x_1$-bad subgraphs (except for, probably, \textcolor{black}{$x_1$}). 

By Property \ref{lemma2holds} of $\mathcal{P}$, there is $y_2 \in V(B)$ such that the conditions of Lemma \ref{lemma2} hold. Duplicator chooses such a vertex $y_2$. Let $W_A$ and $W_B$ be the $3$-order $(K^*, T^*)$-neighbourhoods of $A|_{\{x_1,x_2\}}$ and $B|_{\{y_1,y_2\}}$ respectively. Note that, \textcolor{black}{by Item \ref{lemma2-end-conclusion} of the conclusion of Lemma \ref{lemma2}}, there exists an isomoprphism $\varphi$ between $W_A$ and $W_B$ such that $\varphi(x_1)=y_1$ and $\varphi(x_2)=y_2$.

Without loss of generality we may assume that in the third round Spoiler chooses a vertex $x_3 \in V(A)$. We examine several subcases of Case \ref{case3}.\\

\pu Assume that $x_3$ is in the $2$-order $(K^*, T^*)$-neighbourhood of $A|_{\{x_1,x_2\}}$. Duplicator chooses $y_3=\varphi(x_3)$. 
If, in the fourth round, Spoiler chooses a vertex \textcolor{black}{say, $x_4 \in V(A)$} that is adjacent to at least two of the previous vertices, \textcolor{black}{then $x_4 \in W_A$. Since $W_A$ and $W_B$ are isomorphic, Duplicator chooses $y_4 = \varphi(x_4)$ and wins}. If Spoiler chooses a vertex adjacent to at most one of the previous vertices, the last step is obvious due to the fact \textcolor{black}{that an extension with one edge is always $3/5$-safe.} \\

\pu Assume that $x_3$ is not in the $2$-order $(K^*, T^*)$-neighbourhood of $A|_{\set{x_1, x_2}}$ and $x_3$ is not adjacent both to $x_1$ and $x_2$. \\

\subpu Assume that $\mathcal{N}(x_1,x_2,x_3)$ is not empty. If $|\mathcal{N}(x_1,x_2,x_3)| \geqslant 2$ then $x_3$ is in the $2$-order $(K^*, T^*)$-neighbourhood of $A|_{\set{x_1, x_2}}$, which contradicts the assumption. Thus, we denote $t_A$ as the only vertex in $\mathcal{N}(x_1,x_2,x_3)$. Due to the isomorphism of $W_A$ and $W_B$, there is $t_B \in V(B)$ adjacent to $y_1$, $y_2$.

We now need to select a vertex $y_3 \in V(B)$ such that $y_3 \sim t_B$, $\delta(x_1,\lnot x_2,x_3) = \delta(y_1,\lnot y_2,y_3)$ and $\delta(\lnot x_1,x_2,x_3) = \delta(\lnot y_1,y_2,y_3)$. \textcolor{black}{If $\delta(x_1,\lnot x_2,x_3) = \delta(\lnot x_1,x_2,x_3) = 1$ then $\zeta_1^{W_A}(x_1,x_2,t_A)=1= \zeta_1^{W_B}(y_1,y_2, t_B)$. We therefore can select $y_3$ from the corresponding $(K_1,T_1)$-extension of $y_1$, $y_2$, $t_B$. 
\textcolor{black}{If $\delta(x_1,\lnot x_2,x_3) \neq 1$ or $\delta(\lnot x_1,x_2,x_3) \neq 1$, let $K$ be the subgraph of $X$ induced on $x_1$, $x_2$, $x_3$, $t_A$ and a vertex from $\mathcal{N}(x_1,\lnot x_2,x_3)$ or $\mathcal{N}(\lnot x_1,x_2,x_3)$ if one of the latter is non-empty. The pair $(K, K|_{\set{x_1,x_2,t_A}})$ is clearly $3/5$-safe. Therefore,} by Property \ref{end_conds_XY} of $\mathcal{P}$, there is a strict \textcolor{black}{$1$}-generic \textcolor{black}{$(K, K|_{\set{x_1,x_2,t_A}})$-extension of $Y|_{\set{y_1,y_2,t_B}}$} that guarantees the existence of $y_3$ such that $\delta(x_1,\lnot x_2,x_3) = \delta(y_1,\lnot y_2,y_3)$ and $\delta(\lnot x_1,x_2,x_3) = \delta(\lnot y_1,y_2,y_3)$. Duplicator chooses such a vertex $y_3$ and wins in the fourth round: the only non-trivial case is when Spoiler chooses a vertex from $\mathcal{N}(x_1,x_2,\lnot x_3)$, but then $\mathcal{N}(y_1,y_2,\lnot y_3)$ is also non-empty, due to the isomorphism of the $2$-order $(K^*, T^*)$-neighbourhoods of $A|_{\{x_1, x_2\}}$, $B|_{\{y_1, y_2\}}$  and the equality $|\mathcal{N}(x_1,x_2,x_3)| = |\mathcal{N}(y_1,y_2,y_3)|=1$.\\}

\subpu Assume that $\mathcal{N}(x_1,x_2,x_3)$ is empty. Then Duplicator chooses a vertex $y_3 \in V(B)$ such that $\mathcal{N}(y_1,y_2,y_3)$ is empty, $\delta(x_1,\lnot x_2,x_3) = \delta(y_1,\lnot y_2,y_3)$ and $\delta(\lnot x_1,x_2,x_3) = \delta(\lnot y_1,y_2,y_3)$ (such $y_3$ exists due to Property \ref{end_conds_XY} of $\mathcal{P}$). \textcolor{black}{Clearly, $\delta(x_1,x_2,\lnot x_3) = \delta(y_1,y_2,\lnot y_3)$, due to the isomorphism of the $2$-order $(K^*, T^*)$-neighbourhoods of $A|_{\{x_1, x_2\}}$, $B|_{\{y_1, y_2\}}$. The further winning strategy is obvious.}\\

\pu Assume that $x_3$ is not in the $2$-order $(K^*, T^*)$-neighbourhood of $A|_{\set{x_1, x_2}}$ and $x_3$ is adjacent to exactly one of $x_1$ and $x_2$. Note that $\mathcal{N}(x_1,x_2,x_3)$ is empty (otherwise, $x_3$ belongs to the $2$-order $(K^*, T^*)$-neighbourhood of $A|_{\set{x_1, x_2}}$). 

We now need to select a vertex $y_3 \in V(B)$ such that $y_3$ is not in the $2$-order $(K^*, T^*)$-neighbourhood of $B|_{\set{y_1,y_2}}$, and also $\delta(x_1,x_3) = \delta(y_1,y_3)$ and $\delta(x_2,x_3) = \delta(y_2,y_3)$. \textcolor{black}{Let us prove the existense of such $y_3$. If both $\delta(x_1,x_3)$ and $\delta(x_2,x_3)$ are equal to $1$, the existence follows from the fact that $\zeta_1^{W_A}(x_1,x_1,x_2) = \zeta_1^{W_B}(y_1,y_1,y_2)$ and $\zeta_1^{W_A}(x_1,x_2,x_2) = \zeta_1^{W_B}(y_1,y_2,y_2)$.} 

\textcolor{black}{If $\delta(x_1,\lnot x_2,x_3) \neq 1$ or $\delta(\lnot x_1,x_2,x_3) \neq 1$, let $K$ be the subgraph of $X$ induced on $x_1$, $x_2$, $x_3$ and a vertex from $\mathcal{N}(x_1,\lnot x_2,x_3)$ or $\mathcal{N}(\lnot x_1,x_2,x_3)$ if one of the latter is non-empty. The pair $(K, K|_{\set{x_1,x_2}})$ is clearly $3/5$-safe. Therefore, by Property \ref{end_conds_XY} of $\mathcal{P}$, there is a strict $1$-generic $(K, K|_{\set{x_1,x_2}})$-extension $\hat K$ of $Y|_{\set{y_1,y_2}}$. Duplicator selects $y_3$ to be the image of $x_3$ under the isomorphism of $K$ and $\hat K$. Since $\hat K$ is a $1$-generic extension, we have $\delta(x_1,x_3) = \delta(y_1,y_3)$ and $\delta(x_2,x_3) = \delta(y_2,y_3)$. Lastly, verify that $y_3$ is not in the $2$-order $(K^*, T^*)$-neighbourhood of $B|_{\set{y_1,y_2}}$. Otherwise, it is adjacent to at least one vertex from $\mathcal{N}(y_1,y_2)$, which contradicts the fact that $\hat K$ is a $1$-generic extension of $Y|_{\set{y_1,y_2}}$.}

Thus, Duplicator selects such a vertex $y_3$. Clearly, Duplicator wins in the fourth round. Indeed, the only non-trivial case is if Spoiler selects a common neighbour of $x_1,x_2$ (without loss of generality, he plays in $A$). Since $W_A$ and $W_B$ are isomorphic, there exists a common neighbour of $y_1,y_2$ in $B$. It is not adjacent to $y_3$ since otherwise $y_3$ is in the $2$-order $(K^*, T^*)$-neighbourhood of $B|_{\set{y_1, y_2}}$.\\


\i \label{case4} Let $x_2$ be a vertex in the $0$-neighbourhood of $x_1$ \textcolor{black}{such that there is no $x_1$-bad subgraph $U_A$ and extension $Q_A$ described in Case \ref{case2} that contains $x_2$}, and $x_2$ has a neighbour in at least one of $x_1$-bad subgraphs (excluding $x_1$). 

Let us first prove that such a neighbour is unique. Indeed, if there is an $x_1$-bad subgraph $U$ with two neighbours of $x_2$ then $U$ is not $\G$-maximal. If there are two $x_1$-bad subgraphs $U_1$ and $U_2$ with vertices $u_1 \in V(U_1)$ and $u_2 \in V(U_2)$ such that $u_1$ and $u_2$ are both neighbours of $x_2$ then, \textcolor{black}{by Property \ref{begin_conds_XY} of $\mathcal{P}$} and Property \ref{two-bad-subgraphs-cant-intersect-kjtj} \textcolor{black}{(Corollary, Item 1)} of $\G$, 
\textcolor{black}{either the graph induced on $V = V(U_1) \cup V(U_2) \cup \set{x_2}$ has a copy in $\G$, which contradicts the fact that $U_1$ and $U_2$ are $\G$-maximal}, or $\rho^{\max}(A|_{V}) \geqslant 5/3$ with $|V| \leqslant 352$, which contradicts Property \ref{no-dense-subgraphs} of $\mathcal{P}$. Thus, there is only one neighbour of $x_2$ in a single $x_1$-bad subgraph $U_A$. 

\textcolor{black}{Moreover, note that $x_1 \not\sim x_2$: otherwise $x_2$ has at least two neighbours in $U_A$, which contradicts the fact that $U_A$ is $\mathcal{G}$-maximal.
}

Note that the if-conditions 1--5 of Lemma~\ref{lemma3} hold. 
By Property~\ref{lemma3holds} of $\mathcal{P}$ and due to the choice of $y_1$, in $B$ there is a vertex $y_2$ such that the conditions of Lemma~\ref{lemma3} hold. Duplicator chooses $y_2$. Let $U_B$ be the $x_1$-bad subgraph in $B$ that corresponds to $U_A$. Let $W_A$ and $W_B$ be the 1-order $(K^*, T^*)$-neighbourhoods of $A|_{V(U_A) \cup \set{x_2}}$ and $B|_{V(U_B) \cup \set{y_2}}$ in $A$ and $B$, and let $\varphi$ be an isomorphism between $W_A$ and $W_B$ that maps $x_1$ to $y_1$, $x_2$ to $y_2$ and preserves the canonical order of $U_A$ and $U_B$. Note that $|V(W_A) \setminus (V(U_A) \cup\{x_2\})| \leqslant 1$ (see Remark after Lemma~\ref{lemma3}).

Without loss of generality, in the third round Spoiler chooses a vertex $x_3 \in V(A)$. The case when $x_3$ belongs to $W_A$ is the same as Subcase~2b since, if $x_4$ is outside $W_A$, then it may have at most one neighbour in $W_A$ as well. 

Consider the case when $x_3$ is \textcolor{black}{not} in $W_A$. \textcolor{black}{Let us prove that $x_3$ cannot be adjacent to $2$ or more vertices from $W_A$. Assume the contrary. Note that $x_3$ cannot have two neighbours in $V(U_A)\cup\{x_2\}$ since $x_3 \not \in W_A$. Then $x_3$ is adjacent to the only vertex from $V(W_A) \setminus (V(U_A)\cup\{x_2\})$ (denoted by $t_A$) which is, in turn, adjacent to $x_2$ and a single vertex from $U_A$.} 
If $x_3$ is adjacent to $x_2$ then $(A|_{V(U_A) \cup \set{x_2, x_3, t_A}} , U_A)$ is generically isomorphic to \textcolor{black}{$(K_2,T_2)$, which contradicts the if-condition \ref{l3_end_cond} of Lemma \ref{lemma3}. Similarly, if $x_3$ is adjacent to a vertex from $U_A$ then $(A|_{V(U_A) \cup \set{x_2, x_3, t_A}}, U_A)$ is generically isomorphic to $(K_1,T_1)$, which again contradicts the if-condition \ref{l3_end_cond} of Lemma \ref{lemma3}. Thus, $x_3$ can be adjacent to at most one vertex from $W_A$. Further reasoning is the same as in Subcase 2a.}

Thus, we have considered all possible cases and established that Duplicator has a winning strategy in all of them. The proof of Theorem \ref{main_theorem} is complete.

%
%
%


\section{Conclusion}
{\color{black}We have established that $3/5$ is not a limit point of the $4$-spectrum. The $4$-spectrum is thus a finite set of rational numbers in $[\frac 12, 2]$ (due to Theorems \ref{alpha irrational} and \ref{zhukovskiiless}), and $k=5$ is the minimum quantifier depth such that the $k$-spectrum is infinite. Moreover, $\frac 12$ is in the $4$-spectrum by Theorem \ref{zhukovskiiless}. It would be of interest to describe the 4-spectrum completely. Note that some points of the 4-spectrum are known.
For instance, from the proof of Theorem~\ref{alpha irrational} for $G(n,n^{-1-1/\ell})$ it follows that $2, 3/2, 4/3, \ldots, 1+1/m$ are in the $4$-spectrum and $1+1/(m+1)$ is not in the $4$-spectrum for some $m \in \mathbb{N}$ (it is not hard to see that $m\geq 20$). However, the exact value of such $m$ is unknown. Moreover, the minimal element of the $4$-spectrum greater than $1$ is also unknown (it is also of the form $1+1/\hat m$ by Theorem~\ref{alpha irrational}, but $\hat m$ could be much bigger than $m$).

For $\alpha < 1$, some results were obtained in \cite{universal-law,smallk,4-law,zh_sbornik}. In particular, the largest $\alpha < 1$ in the $4$-spectrum is $13/14$~\cite{zh_sbornik}. Moreover, $3/5$, $1-1/m$ for $m \in \{3,8,9,10,11,12,13\}$ are in the $4$-spectrum~\cite{4-law}. Meanwhile, some intervals, in particular, $(53/80, 2/3)$, $(7/8,43/49)$ (and many others) do not intersect with the $4$-spectrum~\cite{universal-law,smallk}. 

A systematic approach to fully explain the structure of the $4$-spectrum (or a $k$-spectrum for $k \geqslant 4$) has not yet been proposed, and it appears to be a very hard problem.}

\section{Acknowledgements}

This work was supported by a grant for research centers in the field of artificial intelligence, provided by the Analytical Center for the Government of the Russian Federation in accordance with the subsidy agreement (agreement identifier 000000D730321P5Q0002) and the agreement with the Moscow Institute of Physics and Technology dated November 1, 2021 No. 70-2021-00138.

\bibliographystyle{plain}
\bibliography{bibliography}

\end{document}